\newcommand{\CO}[1]{}
\DeclareMathOperator{\im}{{\sf im}}
\DeclareMathOperator{\End}{{\sf End}}
\DeclareMathOperator{\lt}{L}
\DeclareMathOperator{\GL}{GL}
\DeclareMathOperator{\G}{G}
\DeclareMathOperator{\CG}{GC}
\DeclareMathOperator{\Eq}{Eq}
\DeclareMathOperator{\A}{A}
\DeclareMathOperator{\D}{{\sf D}}
\DeclareMathOperator{\id}{{\sf id}}
\DeclareMathOperator{\Q}{{\sf Q}}
\DeclareMathOperator{\latr}{{\sf L}}  
\DeclareMathOperator{\latn}{\overline{{\sf L}}}
\begin{document}

\newtheorem{thm}{Theorem}   
\newtheorem{lemma}[thm]{Lemma}   
\newtheorem{proposition}[thm]{Proposition}   
\newtheorem{theorem}[thm]{Theorem}   
\newtheorem{corollary}[thm]{Corollary}   
\newtheorem{pro}[thm]{Proposition}
\newtheorem{cor}[thm]{Corollary}
\newtheorem{lem}[thm]{Lemma}
\newtheorem{claim}[thm]{Claim}
\newtheorem{fact}[thm]{Fact}

\newcommand{\lab}[1]{\label{#1}}
\renewcommand{\labelenumi}{{\textrm{(\roman{enumi})}}}
\newcommand{\vep}{\varepsilon} 
\newcommand{\One}{1} 
\newcommand{\Zero}{0} 
  \newcommand{\mb}[1]{\mathbb{#1}}
  \newcommand{\mc}[1]{\mathcal{#1}}
  \renewcommand{\phi}{\varphi}
\newcommand{\Qm}{\Q \mc{M}_f}
\newcommand{\et}{\wedge}
\newcommand{\imp}{\rightarrow  }
 \newcommand{\ex}{\exists  }
 \newcommand{\all}{\forall  }
\newcommand{\Mod}{\sf Mod  }
\newcommand{\al  }{\alpha }
\newcommand{\be  }{\beta }
\newcommand{\ga  }{\gamma }
\newcommand{\de  }{\delta }
\newcommand{\Si}{\Sigma  }
\newcommand{\no}{\noindent}
\newcommand{\sub}{\subseteq}
\renewcommand{\th}{\theta}
\newcommand{\eqq}{\mb{E}}

\title[Consistency problem]{On the consistency
problem for  modular lattices and related structures}

\author[Christian Herrmann]{Christian Herrmann}
\address[Christian Herrmann]{Technische Universit\"{a}t Darmstadt FB4\\Schlo{\ss}gartenstr. 7\\64289 Darmstadt\\Germany}
\email{herrmann@mathematik.tu-darmstadt.de}

\author[Yasuyuki Tsukamoto]{Yasuyuki Tsukamoto}
\address[Yasuyuki Tsukamoto]{Department of Mathematics,
Faculty of Science,
Kyoto University,
Kitashirakawa Oiwake-cho, Sakyo-ku,
Kyoto 606-8502, Japan}
\email{tsukamoto@i.h.kyoto-u.ac.jp}

\author[Martin Ziegler]{Martin Ziegler}
\address[Martin Ziegler]{KAIST, School of Computing,
291 Daehak-ro, Yuseong-gu, 34141 Daejeon, Republic of Korea}
\email{ziegler@cs.kaist.ac.kr}

\begin{abstract}
The 
 consistency problem
for a class of algebraic structures asks for an algorithm to decide
for any given  conjunction of equations whether it admits a non-trivial
satisfying
assignment within some member of  the class.
By Adyan (1955) and Rabin (1958) 
it is known unsolvable for (the class of) groups and,
recently, by Bridson and Wilton (2015) for  finite groups.
We derive unsolvability for (finite)   
modular lattices and
various subclasses; in particular,
the class of all subspace lattices of finite dimensional
vector spaces over a fixed or arbitrary field of
characteristic $0$.
The lattice results are used to prove unsolvability of
the consistency problem for (finite) rings and (finite)
representable relation algebras.
These results  in turn apply to equations between simple
expressions in Grassmann-Cayley algebra 
and to  functional and embedded multivalued
dependencies in databases.
\end{abstract}

\subjclass[2010]{Primary: 08A50; Secondary: 03B25;  03G15; 06Cxx; 15A75}
\keywords{consistency problem; triviality problem; satisfiability ;
  decidability; subspace lattice; endomorphism ring;  relation
  algebra; database dependencies; Grassmann-Cayley algebra }

\maketitle

\section{Introduction}
A solution of the  \emph{consistency problem}
for a class $\mc{C}$ of structures and a set $\Sigma$ of constraints
consists in an algorithm which, given any $\varphi\in\Sigma$,
decides whether there is a structure $A\in\mc{C}$
and a non-trivial assignment in $A$ satisfying $\varphi$.
Here, in the context of a ixed set $\Sigma$,
an assignment is ``trivial'' if it satisfies 
\emph{all} constraints $\psi\in\Sigma$.
For classes of  algebraic structures,  the familiar
 constraints are 
conjunctions of equations.
In the case $\Sigma$ consists of all of them,
the complement of the
consistency  problem
is known
as the  \emph{triviality problem}:
to decide for a given conjunction of equations 
whether every satisfying assignment
within the class 
generates  a singleton subalgebra 
(that is, whether 
the associated finitely presented algebra is trivial)
---
a  problem  reducing to the word problem.
A famous  instance of unsolvability 
is  given by the class of all groups,
 Adyan \cite{adyan,adyan2} and Rabin \cite{rabin};
the case of all finite groups is due to Bridson and Wilton \cite{brid}.

We use these to show in Section~\ref{MOLs}
the consistency problem unsolvable for  classes
of  modular lattices 
and subclasses of the quasivariety generated by finite modular lattices;
these classes are supposed to satisfy  certain richness conditions
concerning  the presence of `sufficiently many'
   subspace lattices $\lt(V)$
of vector spaces $V$.

This applies, in particular,
to the class of all $\lt(V)$ where $V$ is finite dimensional
over a fixed field of characteristic $0$.
In case the $V$ are real or complex Hilbert spaces,
unsovability extends to the associated ortholattices (Section~\ref{CMOLs}),
thus giving a
negative answer to the question raised in \cite[\S III.C]{LiCS},
the decision problem for ``quantum satisfiability in indefinite
(yet finite) dimensions'' (see also \cite {jacm}).

Section~\ref{Frames} recalls our central tool: 
the interpretation via (von Neumann) frames,
translating group  presentations into such for modular lattices. 
This provides a reduction of word problems
if the encoding is also applied to the equations to be decided,
cf.   Lipshitz \cite{lip} and  Freese \cite{freese}.
However, such encoding turns trivial assignments in groups 
into non-trivial assignments within frame generated sublattices.
Thus, we devise in Lemma~\ref{5} of Section~\ref{MOLs} 
a bit more sophisticated encoding based on 
lattice relations specific for 
\emph{fixed-point free} actions of linear groups ---
after asserting in Lemma~\ref{gp} of Section~\ref{AlgebraicStructures}
faithful such representation to indeed exist

The methods and results of \cite{hcom,data} allow to transfer 
unsolvability of the consistency problem  to (finite) representable
 relation algebras (Section~\ref{RelationAlgebras}); further 
to (finite)  relational databases with conjunctions of functional and embedded multivalued
dependencies as constraints (Section~\ref{databases}). 
In particular, there is no algorithm to decide for every 
 finite conjunction of 
\emph{functional dependencies} and \emph{embedded multivalued dependencies}
whether it implies that all  attributes are keys.
Consistency    for conditional  inclusion and functional
dependencies has been studied in \cite{idep};
 undecidability has been shown for the
combination of both.

Using the description of joins and meets of principal
right ideals in regular rings, the consistency problem
for classes of (finite) modular lattice can be reduced to that for
classes of (finite) rings in Section~\ref{Rings}. 
The special case of endomorphism rings
gives a further reduction to satisfiability of conjunctions
of equations between simple expressions
in Grassmann-Cayley algebras (Section~\ref{GrassmanCayley}). 
Thus, these problems
turn out algorithmically unsolvable, too.

\section{Algebraic structures} \lab{AlgebraicStructures}
We consider classes $\mc{C}$ of  structures
and sets $\Sigma$ of constraints,
that is formulas $\pi$ in a language associated with $\mc{C}$
--- we write $\pi(\bar x)$ where
$\bar x$ denotes the list of free variables in $\pi$.
An \emph{assignment} within  $A \in \mc{C}$
is a map $\bar x \mapsto \bar a$
into $A$; it \emph{satisfies} $\pi(\bar x)$ if $A\models \pi(\bar a)$;
it is \emph{trivial}
if it satisfies all  constraints  $\pi(\bar x) \in \Sigma$.
A solution of the \emph{consistency problem} for $\mc{C}$ and $\Sigma$
 consists in an algorithm which
decides for any constraint whether there is
a non-trivial satisfying assignment
within $\mc{C}$, that is, within some $A\in \mc{C}$.

Primarily, we consider classes $\mc{C}$ of algebraic structures
of finite signature;
here,  the usual
constraints are conjunctions of equations.
If  $\Sigma$ consists of exactly  these,
we speak just of the  \emph{consistency problem} for $\mc{C}$;
 trivial assignments
are those within singleton subalgebras of some $A \in \mc{C}$;
if $\mc{C}$ is a class of rings with unit or bounded lattices,
$A$ must be trivial (requiring $0=1$).

Of course,
unsolvability  with respect to $\Sigma$  for $\mc{C}$ 
is  
inherited by any expansion $\mc{C}'$ of $\mc{C}$
(that is, the language of $\mc{C}'$ has some
operation symbols in addition to that of $\mc{C}$
and
the members of $\mc{C}'$ arise from those of $\mc{C}$
by adding  operations denoted by these  additional symbols).
But, 
trivial  assignments  may generate non-trivial subalgebras
in the expansion.
Though, if within $\mc{C}$ trivial assignments require
trivial algebras, 
  unsolvability 
of the consistency problem  is inherited
by any expansion.

A \emph{quasi-variety} 
is a class of algebraic structures definable by
sets of quasi-identities: sentences of the form
$\forall \bar x.\; \pi(\bar x) \Rightarrow \psi(\bar x)$
where $\pi$ and $\psi$ are conjunctions of equations,
$\pi$ being possibly empty. Given a class
$\mc{C}$, the smallest quasi-variety $\Q \mc{C}$
containing $\mc{C}$ (i.e. \emph{generated by} $\mc{C}$)
is the model class of the set of quasi-identities valid in $\mc{C}$.
 The consistency problems for 
$\mc{C}$ and $\Q \mc{C}$ are equivalent due to the following.

\begin{fact}\lab{tri}
A conjunction $\pi(\bar x)$ of equations admits a non-trivial
satisfying assignment in $\mc{C}$ if and only if it does so in 
$\Q \mc{C}$. 
\end{fact}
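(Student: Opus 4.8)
The plan is to prove both directions of the biconditional in Fact~\ref{tri}.

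The easy direction is the ``only if'': since $\mc{C} \subseteq \Q\mc{C}$, any non-trivial satisfying assignment within some $A \in \mc{C}$ is already one within $\Q\mc{C}$. Here I must only check that ``non-trivial'' means the same thing on both sides, i.e.\ that an assignment generating a non-singleton subalgebra of $A \in \mc{C}$ also counts as non-trivial when $A$ is viewed as a member of $\Q\mc{C}$; this is immediate because the notion of trivial assignment (generating a singleton subalgebra) depends only on the structure $A$ and the tuple $\bar a$, not on the ambient class.

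The substantive direction is the ``if''. Suppose $\pi(\bar x)$ has a non-trivial satisfying assignment $\bar x \mapsto \bar a$ in some $B \in \Q\mc{C}$. The plan is to use the standard characterization of the generated quasi-variety: $\Q\mc{C}$ coincides with $\mathbf{SP}\mc{C}$, the closure of $\mc{C}$ under subalgebras and (possibly infinite) direct products. Thus I may assume $B$ is a subalgebra of a product $\prod_{i \in I} A_i$ with each $A_i \in \mc{C}$, and each $a_k$ is a tuple $(a_k^i)_{i \in I}$. Since $\pi$ is a conjunction of equations and equations hold in a product exactly when they hold in every coordinate, the projected assignment $\bar x \mapsto \bar a^i$ satisfies $\pi$ in $A_i$ for every $i$. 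It remains to locate a single coordinate $i$ where the projected assignment is non-trivial. Non-triviality of $\bar a$ in $B$ means the generated subalgebra is not a singleton, equivalently some pair of closed terms $s(\bar a) \ne t(\bar a)$ (for ``$\ne$'' one may take $s,t$ to be the defining operations applied to the generators, or simply two distinct elements of the generated subalgebra witnessed as term-values). Because equality in the product is componentwise, $s(\bar a) \ne t(\bar a)$ forces $s(\bar a^i) \ne t(\bar a^i)$ for at least one index $i$, and for that $i$ the projection is a non-trivial satisfying assignment in $A_i \in \mc{C}$.

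The step I expect to be the only real point requiring care is the reduction to $\mathbf{SP}\mc{C}$ together with the correct reading of ``non-trivial'' in the constant-free algebraic setting. In a signature without constants, a singleton subalgebra may fail to exist (the empty subalgebra is generated by the empty set), so ``non-trivial'' should be read as: the subalgebra generated by $\bar a$ has more than one element, equivalently $\bar a$ is not constant under all term operations collapsing to a point. I would phrase the witness of non-triviality as a pair of lattice (or ring) terms $s(\bar a), t(\bar a)$ that are provably distinct, so that the componentwise nature of product equality transfers both the satisfaction of $\pi$ and the separation $s \ne t$ down to a single factor. Once this bookkeeping is fixed, the argument is routine; the essential mathematical content is entirely carried by the preservation of equations and inequations under the operators $\mathbf{S}$ and $\mathbf{P}$.
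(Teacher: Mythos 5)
Your ``only if'' direction and your reading of triviality (the subalgebra generated by $\bar a$ is a singleton, witnessed by a pair of distinct term values) are fine and consistent with the paper's conventions, but the ``if'' direction rests on a false claim. The ``standard characterization'' you invoke is not $\Q\mc{C}=\mathbf{SP}\mc{C}$: closing under subalgebras and products yields only the prevariety generated by $\mc{C}$, and the correct statement is $\Q\mc{C}=\mathbf{ISPP_{U}}\mc{C}$, subalgebras of products of \emph{ultraproducts} of members of $\mc{C}$ (see e.g. Gorbunov's book, cited in the paper). The gap is not hypothetical in this paper's setting: for $\mc{C}$ the class of all finite groups, $\mathbf{SP}\mc{C}$ is exactly the class of residually finite groups, whereas $\Q\mc{C}$ contains, for instance, the additive group $\mathbb{Q}$, which embeds into a nonprincipal ultraproduct of the groups $\mathbb{Z}/p\mathbb{Z}$ yet has no nontrivial finite quotient; for such a $B\in\Q\mc{C}\setminus\mathbf{SP}\mc{C}$ your argument produces nothing. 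The repair is routine but must be made: write $B\leq\prod_j C_j$ with each $C_j$ an ultraproduct of members of $\mc{C}$; your coordinatewise projection then puts a non-trivial satisfying assignment in some $C_j$, i.e. $C_j\models\exists\bar x\,(\pi(\bar x)\wedge\neg\psi(\bar x))$, where $\psi(\bar x)$ is the finite conjunction of the equations $x_i=x_1$ and $f(x_1,\ldots,x_1)=x_1$ over the (finitely many, by the paper's finite-signature hypothesis) operation symbols $f$; since this is a first-order sentence, {\L}o\'s's theorem transfers it to some factor in $\mc{C}$.

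You should also note how much shorter the intended route is. The paper defines $\Q\mc{C}$ directly as the model class of the quasi-identities valid in $\mc{C}$, and then the whole proof is one observation: $\forall\bar x\,(\pi(\bar x)\Rightarrow\psi(\bar x))$, with $\psi$ as above, is itself a quasi-identity, and it holds in an algebra $A$ exactly when every satisfying assignment for $\pi$ in $A$ is trivial; hence it fails in some member of $\mc{C}$ if and only if it fails in some member of $\Q\mc{C}$. This bypasses all structural analysis ($\mathbf{S}$, $\mathbf{P}$, ultraproducts) and is the reason the fact is stated for quasi-varieties in the first place: non-existence of non-trivial satisfying assignments is literally a quasi-identity. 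Your approach can be made correct as indicated, but it imports exactly the machinery (Birkhoff--Mal'cev-type characterization plus {\L}o\'s) whose content the definitional argument renders unnecessary.
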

\begin{proof}
Consider the quasi-identity $\forall \bar x.\;\pi(\bar x) \Rightarrow
\psi(\bar x)$ where $\psi$ is the conjunction
of all equations $x_i=x_1$ and $f(x_1, \ldots x_1)=x_1$,
$f$ an operation symbol. This is valid in $A$
if and only if $A$ admits only trivial satisfying
assignments for $\pi(\bar x)$. 
\end{proof}

Recall that a \emph{positive primitive} formula
is of the form $\exists \bar x \;\alpha(\bar x)$
where $\alpha$ is a conjunction of  atomic formulas. 
By a \emph{basic equation}
we mean an equation of the form  $y=x$ or  $y=f(\bar x)$ where
$f$ is an operation symbol. 
An \emph{unnested pp-formula} is of the form  
$\exists \bar y. \phi(\bar x,\bar y)$
where $\phi(\bar x,\bar y)$ is a conjunction of basic equations.
For the following compare  \cite[Theorem 2.6.1]{hodges}. 

\begin{fact}\lab{p1}
Every  conjunction $\pi(\bar x)$  of equations 
is logically equivalent to an unnested pp-formula
 $\exists \bar y. \phi(\bar x,\bar y)$. 
Moreover, in the case of a (bounded) lattice $L$,  if 
$L\models \pi(\bar a)$ only for single valued $\bar a$
then $L\models \phi(\bar a,\bar b)$
only for single valued $\bar a,\bar b$.
\end{fact}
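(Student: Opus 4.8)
The plan is to prove Fact~\ref{p1} by structural induction on the term complexity of the equations in $\pi(\bar x)$, introducing fresh existentially quantified variables to name the values of nested subterms, exactly as in the standard unnesting procedure (cf.\ \cite[Theorem 2.6.1]{hodges}). Concretely, I would first observe that it suffices to treat a single equation $s(\bar x)=t(\bar x)$, since a conjunction is handled by unnesting each conjunct separately and taking the conjunction of the results (sharing the variables $\bar x$). For a single equation, I would repeatedly apply the rewriting step that replaces an occurrence of a subterm $f(\bar u)$ appearing inside a larger term by a fresh variable $y$, simultaneously adjoining the basic equation $y=f(\bar u)$ under an existential quantifier. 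Iterating until no nesting remains yields a conjunction $\phi(\bar x,\bar y)$ of basic equations with $\pi(\bar x)\equiv\exists\bar y.\,\phi(\bar x,\bar y)$; logical equivalence is immediate because each fresh $y$ is existentially bound and forced by its defining basic equation to equal the subterm it abbreviates.

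The substantive part is the second sentence, the single-valuedness transfer for (bounded) lattices. Here I would argue that every operation of a lattice is \emph{idempotent} and \emph{monotone}, and more specifically that in a lattice one always has $u\wedge u=u=u\vee u$, and that if $a,b$ are comparable then $a\wedge b$ and $a\vee b$ lie in $\{a,b\}$. I would exploit that the fresh variables $\bar y$ introduced by unnesting are defined by basic equations $y=f(\bar u)$ whose arguments $\bar u$ are among the $\bar x$ and the previously introduced $\bar y$'s. Working through the defining equations in the order they were created, I would show by induction that if $\bar a$ is single valued (all entries equal to some fixed $c$), then each witness $b_k$ forced by $b_k=f(\text{single-valued arguments})$ must itself equal $c$: indeed $c\wedge c=c$ and $c\vee c=c$, and the bounds $0,1$ are constants, so any basic lattice term evaluated at constantly-$c$ arguments returns $c$. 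This gives that the \emph{only} witnesses making $\phi(\bar a,\bar b)$ true are the single-valued ones, which is the claim.

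The delicate point --- and what I expect to be the main obstacle --- is the exact logical content of the hypothesis ``$L\models\pi(\bar a)$ only for single valued $\bar a$.'' This is a hypothesis about the \emph{original} equation, and I must transfer it to a statement about witnesses of the unnested formula without assuming more than idempotence and the presence of bounds. The subtlety is that a basic equation $y=f(\bar u)$ does \emph{not} by itself force $y=u_i$ for general lattice operations when the $u_i$ are not yet known to be equal; the induction must proceed strictly in creation order so that by the time we reach a given defining equation its arguments have already been shown single valued. I would therefore be careful to fix the ordering of the $\bar y$ coming from the unnesting algorithm, and to verify that this ordering is well-founded (each $y$ is defined in terms of earlier variables only), so that the inductive step ``single-valued arguments force a single-valued value'' applies at every stage and propagates single-valuedness from $\bar a$ through all of $\bar b$.
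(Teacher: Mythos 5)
The paper offers no written proof of this Fact beyond the pointer to Hodges' unnesting theorem, and your first part --- iterated replacement of innermost subterms $f(\bar u)$ by fresh existentially bound variables with defining basic equations, handling a conjunction conjunct by conjunct --- is exactly that standard route. For the ``moreover'' clause in the \emph{pure} lattice signature $\{+,\cap\}$ your creation-order induction is also correct and is surely the intended argument: each witness $b_k$ is forced to be the value of a subterm at $\bar a$, and idempotence ($c+c=c=c\cap c$) propagates the single value $c$ through all witnesses. (Your appeal to comparability of $a,b$ is unnecessary; idempotence alone suffices.)

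The genuine gap is in the bounded case, and it sits precisely at the step you flagged as delicate, but resolved incorrectly. Your claim that ``the bounds $0,1$ are constants, so any basic lattice term evaluated at constantly-$c$ arguments returns $c$'' is false as soon as the constants occur in $\pi$: a fresh variable introduced for the subterm $1$ is defined by the basic equation $y=1$ and takes the value $1$, not $c$. Concretely, let $\pi(x_1,x_2)$ be $x_1=x_2 \;\wedge\; x_1+1=x_2+1$. In every bounded lattice the satisfying assignments are exactly the single valued ones (the second conjunct is a logical consequence of the first), so the hypothesis holds; yet your unnesting yields $\phi$ containing $y_1=1,\ y_2=x_1+y_1,\ y_3=x_2+y_1$, which in the two-element lattice is satisfied by the non-single-valued tuple $(0,0,1,1,1)$. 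So the transfer statement, read with ``single valued'' as ``all entries equal'', fails for the canonical unnesting whenever $0,1$ may occur in the equations, and no argument of the kind you propose can establish it without first deleting such redundant constant-bearing conjuncts. The repair is to align ``single valued'' with the paper's notion of triviality from Section~\ref{AlgebraicStructures}: an assignment generating a singleton subalgebra, which in a bounded lattice forces $0=1$. Since by your creation-order induction each witness lies in the subalgebra generated by the entries of $\bar a$ together with the constants, a trivial $\bar a$ makes that subalgebra a singleton $\{c\}$ with $0=c=1$, and all witnesses equal $c$; this reading is also the one actually needed downstream (in Theorems~\ref{11} and \ref{ring} non-triviality is carried by $z_\bot\neq z_\top$ resp.\ $0\neq 1$, and for the formulas of Lemma~\ref{5} with constants a single-valued satisfying assignment already forces $0=1$). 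So either restrict your evaluation claim to constant-free $\pi$, or rephrase single-valuedness via generated subalgebras; as written, the bounded case of your proof does not go through.
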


Unsolvability for the classes of structures
to be considered, here, is shown by   reducing 
from the following deep results of Adyan \cite{adyan,adyan2}, Rabin \cite{rabin},  and 
 Bridson and Wilton
\cite{brid}.
\begin{theorem}\lab{p0}
The consistency problems for the class of all
groups and the class of all finite groups are  unsolvable.
\end{theorem}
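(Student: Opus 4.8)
The statement is, in essence, a reformulation in the vocabulary of the consistency problem of the cited group-theoretic undecidability results, so the plan is to make the translation between the two formulations precise and then invoke those theorems as black boxes. First I would observe that, in the language of groups, a conjunction $\pi(\bar x)$ of equations is nothing but a finite list of relators, and hence determines the finitely presented group $G_\pi=\langle \bar x \mid \pi\rangle$. The key \emph{dictionary} is that an assignment $\bar x \mapsto \bar a$ within a group $H$ satisfies $\pi$ if and only if it induces a homomorphism $G_\pi \to H$, and that such an assignment is trivial (that is, contained in a singleton subgroup $\{1\}$, equivalently satisfying every equation) exactly when this homomorphism is trivial. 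Thus nontrivial satisfying assignments correspond precisely to nontrivial homomorphisms out of $G_\pi$.

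For the class of all groups I would then argue that $\pi$ admits a nontrivial satisfying assignment if and only if $G_\pi$ is nontrivial. If $G_\pi\neq\{1\}$, then its canonical generators, read inside $G_\pi$ itself, form a satisfying assignment, and they cannot all equal $1$ lest $G_\pi$ be trivial; conversely, a nontrivial homomorphism $G_\pi\to H$ forces $G_\pi$ to contain a nonidentity element. Hence the consistency problem for groups is exactly the complement of the triviality problem for finitely presented groups, whose unsolvability is the Adyan--Rabin theorem \cite{adyan,adyan2,rabin}.

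For the class of all finite groups the same dictionary gives that $\pi$ admits a nontrivial satisfying assignment in a finite group if and only if $G_\pi$ possesses a nontrivial finite quotient: the image of a nontrivial homomorphism into a finite group is such a quotient, while conversely any nontrivial finite quotient, composed with the generators, supplies the required assignment. The unsolvability of deciding whether a finitely presented group has a nontrivial finite quotient is precisely the theorem of Bridson and Wilton \cite{brid}, which therefore settles the finite case.

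The entire mathematical depth resides in those two cited theorems, which I would use without proof; there is no genuine obstacle in the reduction itself. The only point requiring care is the bookkeeping of the above correspondence --- in particular checking that the paper's notion of a trivial assignment (one lying in a singleton subalgebra) coincides with the trivial homomorphism, and that the finite case lines up with the ``nontrivial finite quotient'' phrasing of Bridson--Wilton rather than with some superficially different condition.
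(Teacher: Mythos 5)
Your proposal is correct and takes essentially the same approach as the paper: Theorem~\ref{p0} is stated there without proof, as a direct citation of the Adyan--Rabin theorem \cite{adyan,adyan2,rabin} and of Bridson--Wilton \cite{brid}, which are exactly the two black boxes you invoke. The dictionary you spell out --- conjunctions of equations as presentations $G_\pi$, nontrivial satisfying assignments as nontrivial homomorphisms out of $G_\pi$, and, in the finite case, the equivalence with $G_\pi$ having a nontrivial finite quotient --- is the standard translation the paper leaves implicit, and you carry it out correctly.
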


\begin{corollary} \lab{c0} 
Let $\mc{C}$ 
be a class of (finite)  semigroups or
monoids 
such that any (finite) group embeds into some member of $\mc{C}$.
Then the consistency problem for $\mc{C}$ is unsolvable.
\end{corollary}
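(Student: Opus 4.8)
The plan is to reduce the consistency problem for groups (unsolvable by Theorem~\ref{p0}) to that for the given class $\mc{C}$ of semigroups or monoids. Let $\pi(\bar x)$ be a conjunction of group equations. I first rewrite $\pi$ as a conjunction $\pi'(\bar x)$ of \emph{semigroup} equations by the standard device of encoding the group structure multiplicatively: each group variable $x_i$ is paired with a variable $x_i'$ intended to denote its inverse, and the group axioms relating inverses are added as further conjuncts (equations asserting $x_i x_i' = e$, $x_i' x_i = e$, and that $e$ is a two-sided identity). In the monoid case the identity $e$ is the constant $1$; in the pure semigroup case one introduces $e$ as an extra variable together with idempotency and absorption equations forcing it to act as a local identity on the generated substructure.

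The key point is that, for any group $G$, a satisfying assignment of $\pi$ in $G$ extends uniquely to a satisfying assignment of $\pi'$ in the semigroup (resp.\ monoid) reduct of $G$, by setting each $x_i'$ to the inverse of $x_i$; and this assignment is non-trivial in the semigroup sense precisely when the original is non-trivial in the group sense. Conversely, if $\pi'$ has a non-trivial satisfying assignment in some member $A\in\mc{C}$, then the inverse-witnessing conjuncts guarantee that the subsemigroup generated by the $x_i$ inside $A$ is in fact a group (every generator has a two-sided inverse relative to the local identity $e$), and restricting to it yields a non-trivial satisfying assignment of the original $\pi$ in a group. The hypothesis that every (finite) group embeds into some member of $\mc{C}$ is exactly what secures the forward direction: given a non-trivial group solution we embed the witnessing group into $\mc{C}$ and pull the assignment across the embedding, which preserves all equations and non-triviality.

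Thus $\pi(\bar x)$ admits a non-trivial satisfying assignment in some group if and only if $\pi'(\bar x)$ admits one in some member of $\mc{C}$, and the map $\pi\mapsto\pi'$ is plainly computable. A decision procedure for the consistency problem of $\mc{C}$ would therefore decide the group consistency problem, contradicting Theorem~\ref{p0} in both the finite and the general case. The step I expect to require the most care is the pure semigroup case: making the local identity $e$ behave correctly without a constant symbol, and ensuring that a non-trivial solution of $\pi'$ cannot ``collapse'' $e$ in a way that fails to produce a genuine group on the generated subsemigroup. In the monoid case this is free, and for semigroups it is handled by the familiar trick of adjoining an identity, so no real obstacle remains beyond bookkeeping.
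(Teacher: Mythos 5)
Your proposal is correct and follows essentially the same route as the paper's own proof: the paper likewise adjoins inverse variables $y_i$ with the conjuncts $x_iy_i=e=y_ix_i$ (using the monoid constant when available, and otherwise a fresh variable $u$ with $x_iu=x_i=ux_i$ and $y_iu=y_i=uy_i$ acting as a local identity), so that non-trivial solutions of the translated system in a member of $\mc{C}$ correspond exactly to non-trivial solutions of $\pi$ in a group (namely in the group of units of the relevant local monoid), and the forward direction is secured by the embedding hypothesis. The only cosmetic difference is your explicit idempotency conjunct for $e$ in the semigroup case, which is redundant since $u^2=u$ already follows from $x_1y_1=u$ together with the local-identity equations.
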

In particular, we may consider groups just with multiplication.
\begin{proof}
In the case of monoids with unit $e$,
given a  
 conjunction $\pi(\bar x)$
of group equations, form the conjunction $\hat{\pi}(\bar x, \bar y)$ 
of $\pi(\bar x)$ and  the $x_iy_i=e=y_i
x_i$ with  new variables $y_i$.
Thus, $\pi(\bar x)$ admits a non-trivial assignment
within some (finite) group if and only $\hat{\pi}$
does so within   some (finite) member of $\mc{C}$, namely within the group of units.
In the absence of constant  $e$, mimic it   by a new variable
$u$ adding  the equations $x_iu=x_i=ux_i$, $y_iu=y_i=uy_i$.
\end{proof}

The following is the intermediate step when deriving a lattice from a 
group. Supposedly, it is well known.
To some extend it could be replaced by  use of
 Maschke's Theorem.
  For a vector space $V$
over a division ring $F$ of characteristic $c$ we write
$\chi(F)=\chi(V) =c$. 
Let $\mc{V}_F$ denote the class of all $F$-vector spaces.
A representation $\rho$ of $G$ in $V$ is
\emph{fixed point free} if
 $\rho(g)(v)=v$
for all $g \in G$ only if $v=0$.

\begin{lemma} \lab{gp}
Let $G$ be a group and $V$ a vector space
where either   $\dim V\geq |G|$ and $G$  is infinite
or $\dim V=|G|-1>0$ is finite 
and $\chi(V)$ does not divide $|G|$.
Then there exists a fixed point free 
faithful representation of $G$ in $V$.
\end{lemma}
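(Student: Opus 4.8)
The plan is to build the required representation from the (left) \emph{regular representation} of $G$, i.e.\ the action on the group algebra $FG$ with $F$-basis $(e_g)_{g\in G}$ where $h\in G$ acts by $\rho(h)\colon e_g\mapsto e_{hg}$. This is faithful, since $\rho(h)(e_e)=e_h$ forces $h=e$ whenever $\rho(h)=\id$. Its fixed space is also immediate: a finitely supported $v=\sum_g a_g e_g$ satisfies $\rho(h)(v)=v$ for all $h$ exactly when the coefficient function $g\mapsto a_g$ is constant on $G$. The two hypotheses of the lemma then call for two different refinements of this single construction.

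First I would dispose of the infinite case. When $G$ is infinite, the only constant finitely supported coefficient function is $0$, so the fixed space of $FG$ is $\{0\}$: the regular representation is already fixed point free and faithful, of dimension $|G|$. To realize it on the given $V$ of dimension $\kappa=\dim V\ge|G|$, I would pass to the direct sum $W=\bigoplus_{i<\kappa}FG$ of $\kappa$ copies. A direct sum of componentwise actions has fixed space the direct sum of the fixed spaces, hence $\{0\}$, and is faithful because each summand already is; moreover $\dim W=\kappa\cdot|G|=\kappa$ by cardinal arithmetic (as $\kappa$ is infinite and $\ge|G|$). Thus $W\cong V$ carries a fixed point free faithful representation.

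For the finite case the regular representation will not serve directly, since now the fixed space is the line spanned by $s=\sum_{g\in G}e_g$. The remedy is to pass to the augmentation ideal $I=\ker\epsilon$, where $\epsilon\colon FG\to F$, $\sum_g a_g e_g\mapsto\sum_g a_g$, is the augmentation map; as $\epsilon$ is $G$-equivariant (with $G$ acting trivially on $F$), $I$ is a subrepresentation, of dimension $|G|-1=\dim V$. This is exactly where the hypothesis $\chi(V)\nmid|G|$ enters: it guarantees $\epsilon(s)=|G|\neq 0$ in $F$, so $s\notin I$ and $FG=Fs\oplus I$ as $G$-modules. Since $Fs$ is precisely the fixed space of $FG$, its complement $I$ has trivial fixed space, so the action on $I$ is fixed point free; and it is faithful, for if $h$ acts trivially on $I$ then, as $h$ also fixes $s$, it acts trivially on all of $FG=Fs\oplus I$, whence $h=e$. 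Therefore $I\cong V$ furnishes the representation.

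The one genuinely delicate point is the interplay between fixed points and the characteristic. The regular representation of a finite group unavoidably carries the fixed vector $s$, and the whole force of the assumption $\chi(V)\nmid|G|$ is to keep $s$ \emph{outside} the augmentation ideal, so that discarding the trivial summand leaves a fixed-point-free module of the correct dimension $|G|-1$. Everything else is routine --- mere dimension bookkeeping in the infinite case --- and the characteristic hypothesis is exactly what is needed, and nothing more.
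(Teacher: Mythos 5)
Your proof is correct, and while the infinite case coincides with the paper's (both use the regular representation, fixed point free because a finitely supported $G$-constant coefficient function vanishes --- the paper phrases this via transitivity of the action on the basis --- padded to dimension $\kappa$ by a direct multiple), your finite case takes a genuinely different route. The paper keeps the full regular module $V$ with basis $G$, forms the invariant line $U$ spanned by $\sum_{g\in G} g$, and works in the \emph{quotient} $V/U$: it exhibits the images of $G^+=G\setminus\{e\}$ as a basis, computes the action in these coordinates, and shows a putative fixed vector satisfies $r_{gh}=r_g+r_h$, killing it by an order argument in which the hypothesis $\chi(V)\nmid |G|$ enters only at the very last step ($\ell r_h=0$ forces $r_h=0$); moreover the two-element group requires a separate ad hoc case (the sign action $gv=-v$). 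You instead pass to the \emph{subrepresentation} $I=\ker\epsilon$, spending the characteristic hypothesis once and up front to split $FG=Fs\oplus I$ --- precisely the Maschke-flavored move the paper itself anticipates when it remarks that the lemma ``to some extend \dots could be replaced by use of Maschke's Theorem.'' Under the standing hypothesis the two modules are isomorphic (since $FG=U\oplus I$ gives $V/U\cong I$), so the difference is one of presentation rather than substance; but your version buys a uniform treatment that absorbs $|G|=2$ with no special case, an explicit faithfulness argument (the paper verifies only fixed point freeness of the quotient action, leaving faithfulness tacit --- and for $V/U$ faithfulness genuinely fails at $|G|=2$, which is why the paper must split that case off), and a cleaner localization of where the characteristic assumption does its work, at the modest price of invoking the semisimplicity-style splitting in place of the paper's bare-hands coordinate computation.
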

\begin{proof}
For infinite $G$, and $\dim V=|G|$,
we use the regular representation: We  may assume  $G$ a basis of $V$
and define $\rho(g)$ given by the basis permutation
$h \mapsto gh$. The claim follows from the fact
that this action of $G$ on $G$ is transitive:
 $v\neq 0$ in $V$ has the form $v=\sum_{h \in H} r_h h$ 
with some finite $H \subseteq G$ and $r_h \neq 0$;
choosing $k \in G\setminus H$, there is $g \in G$ with $gh=k$
whence $gv= \sum_{h \in H} r_hgh \neq v$.
For $\dim V >|G|$ we use a suitable direct multiple
of this representation. 

For $G$ the $2$-element group $\{ e,g\}$,
define the action on $V$ by $gv=-v$.
For finite $G$ of order $>2$,
 again assuming $G$ a basis of $V$, we have the 1-dimensional
invariant subspace $U$ spanned by $\sum_{g \in G} g$
and the induced action of $G$ on $V/U$.
The $g \in G^+:=G\setminus\{e\}$ form a basis of $V/U$;
that is, any $v+U$ in $V/U$ has a unique representation
\[v+U = \sum_{h \in G^+} r_h h +U
=\bigl(\sum_{h \in G^+, gh\neq e} r_{gh} gh\bigr) +r_gg  +U
\]
for any $g \in G^+$, in particular,
\[e+U = \sum_{ h \in G^+} -h +U=\bigl(\sum_{h \in G^+, gh\neq e} -gh\bigr)- g +U.\]
Thus, 
for  $v+U=\sum_{h \in G^+} r_h h +U$ and $g \in G^+$ one has
$ g(v+U)=$ 
\[=\bigl(\sum_{h \in G^+, gh\neq e} r_hgh\bigr)  -r_{g^{-1}}e +U
=\bigl(\sum_{h \in G^+, gh\neq e} (r_h -r_{g^{-1}})gh\bigr) - r_{g^{-1}}g +U
\] 
and the last expression returns $g(v+U)$ as 
 a linear combination of basis vectors of 
$V/U$. 
Assume  $v+U=g(v+U)$ for all $g \in G$;
that is, for all $g,h \in G^+$ one has
$r_g=-r_{g^{ -1}}$ and
 \[ r_{gh} =   r_h -r_{g^{-1}}=r_h+r_g \mbox{ if } gh\neq e.\]
For each $h \in G^+$, 
it follows $r_{h^k}=kr_h$, by induction, for all $1\leq k<\ell$
where  $\ell$ is  the order of $h$;
in particular, 
$-r_h =r_{h^{-1}}=r_{h^{\ell-1}}=(\ell-1)r_h$,
whence $\ell r_h=0$ and $r_h=0$, due to the assumption on
the characteristic.  Thus, $v+U=0+U$.
\end{proof}

\section{Coordinates in modular lattices} \lab{Frames}

We consider 
lattices as algebraic structures  with
operations  join $a+b$ and meet $a \cap b$;
in particular,
with respect to a suitable partial order $\leq$,
one has
 $a+b =\sup\{a,b\}$ and $a \cap b =\inf\{a,b\}$.
A lattice is \emph{modular} if $a \geq b$ implies
$a\cap(b+c) =b +a \cap c$.
Let 
 $\mc{M}_f$ denote the class of 
all  finite modular lattices.
 The lattice of all equivalence 
relations on the set $S$
is denoted by
  $\Eq(S)$. A sublattice $L$ of the latter is a 
\emph{lattice  of permuting equivalences}   
if,  for the relational product 
$\alpha\circ\beta=\{(x,z):\exists y.(x,y)\in \alpha, (y,z)\in\beta\}$, 
of any  $\alpha,\beta \in L$,
one has  $\alpha \circ \beta=
\beta \circ \alpha$;
that is,  $\alpha \circ \beta$ is 
the join $\alpha + \beta$
 in $\Eq(S)$ and $L$  and, in particular, transitive.
In that case, $L$ is a modular lattice \cite{jon2}.
The lattices $\lt(V)$ 
of all linear subspaces of the vector space $V$
are isomorphic to such: associate with a subspace 
$U$ the equivalence relation on $V$ defined  by $x-y \in U$.
Bounds of a lattice, if considered as constants, will be denoted
 by $0$ (bottom) and $1$ (top); in case of $\lt(V)$
these are $\{0\}$ and $V$. 
 We write $a \oplus b=c$ if
$a+b=c$ and $a \cap b =0$.

An $n$-\emph{frame  in} a  lattice $L$  is a system   
 $\bar a$  of elements $a_1,\ldots ,a_n$, $a_{ij}=a_{ji}\;(1\leq
 i<j\leq n)$, and $a_\bot,a_\top$ of $L$ such that,  where $\sum_{i \in \emptyset} a_i := a_\bot$,
\[ (\sum_{i \in I}a_i) \cap \sum_{j \in J} a_j =\sum_{k \in I\cap J}a_k
\quad \mbox{ for } I,J \subseteq \{1,\ldots ,n\},   \] 
  $a_\top=\sum_\ell a_\ell$, 
and, for pairwise distinct $i,j,k$,
\[  a_i+a_j =a_i+ a_{ij},\;\;
 a_i \cap a_{ij} =a_\bot,\;\;
a_{ik}=(a_i+a_k)\cap(a_{ij}+a_{jk}). \]
Define 
 \[\G(L,\bar a)=\{g \in L\mid g+ a_1=g+ a_2
=a_1+a_2,\; g\cap a_1=g \cap a_2 =a_\bot \}.\]
If $L$ has bounds $0,1$ and if $a_\bot=0$ and 
$a_\top=1$  then we speak of an $n$-\emph{frame of} $L$.
We use  $\bar z$  to denote a system of variables 
to be interpreted  by $4$-frames. Items  (i)--(iii)(b) of the following are 
 well known
in a broader context
\cite{neu,art,lip,freese}; our modest amendment of Item~(iv) 
will turn out as crucial to establish Lemma~\ref{5} below.
All  can be generalized to any fixed
$n \geq 4$.  We state and prove what
is relevant, here. We say that a subgroup $G$ of
$\GL(V)$ \emph{acts fixed point free} if
$gv=v$ for all $g\in G$ only if $v=0$.

\begin{lemma} \lab{fex}
\begin{enumerate} 
\item
For any  vector space $V$ with  $\dim V=nd$, $d$ any cardinal,
and subspace $V_1$ of $V$ of $\dim V_1=d$  there is a
$n$-frame $\bar a$ of $\lt(V)$ such that $ a_1=V_1$.
\item For any $n$-frame $\bar a$ in a modular lattice,
$a_\bot=a_{12}$ implies $a_\bot =a_\top$.
\item
There is a  lattice term $t(x,y,\bar z)$  such that
for any modular lattice $L$  and $4$-frame $\bar a$ in $L$
the following hold:
\begin{enumerate} \item
 $\G(L,\bar a)$ is a group under the multiplication
$(g,h) \mapsto t(g,h,\bar a)$ and with   neutral element
$a_{12}$.
\item 
 If $V$ is a vector space and
$L=\lt(V)$,  
then there is a unique isomorphism $\vep_{\bar a}:a_1 \to a_2$
such that $\Gamma_{\bar a}(f):=\{v- \vep_{\bar a}(f(v))\mid v \in a_1\}$
defines an isomorphism of $\GL(a_1)$ onto
$\G(L,\bar a)$.
\item In (b), the subgroup $G$ 
generated by $f_1, \ldots ,f_k$ in $\GL(a_1)$ acts
 fixed point free on $V_1=a_1$
if and only if $a_{12}\cap \bigcap_{i=1}^kg_i=a_\bot$
where $g_i= \Gamma_{\bar a}(f_i)$.
\end{enumerate}
\item
With any conjunction $\pi(\bar x)$ of group equations one
can effectively associate a conjunction 
$\pi^\#(\bar x, \bar z)$ of lattice equations such
that for any modular lattice $L$ and $\bar g=(g_1, \ldots ,g_k)$ and
$\bar a$ in $L$
one has $L\models \pi^\#(\bar g,\bar a)$ 
if and only if $\bar a$ is a $4$-frame in $L$,
 $\bar g$ in $\G(L,\bar a)$,  $\G(L,\bar a)\models \pi(\bar g)$,
and $a_{12}\cap \bigcap_{i=1}^kg_i=a_\bot$.
\end{enumerate} 
\end{lemma}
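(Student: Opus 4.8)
The plan is to take $\pi^\#(\bar x,\bar z)$ to be the conjunction of four blocks of lattice equations, each responsible for exactly one of the four conditions on the right-hand side, and then to read off the equivalence block by block; since $\pi$ is a finite conjunction, all blocks are finite and can be written down effectively from $\pi$. Three of the blocks are routine. \emph{Block 1 ($\bar z$ is a $4$-frame):} the defining clauses of a $4$-frame are themselves equalities between lattice terms in the entries of $\bar z$, namely the identities $(\sum_{i\in I}a_i)\cap\sum_{j\in J}a_j=\sum_{k\in I\cap J}a_k$ for the finitely many $I,J\subseteq\{1,\dots,4\}$, the clause $a_\top=\sum_\ell a_\ell$, and, for pairwise distinct $i,j,k$, the identities $a_i+a_j=a_i+a_{ij}$, $a_i\cap a_{ij}=a_\bot$ and $a_{ik}=(a_i+a_k)\cap(a_{ij}+a_{jk})$; conjoining them expresses, by definition, that $\bar a$ is a $4$-frame. \emph{Block 2 ($\bar g\in\G(L,\bar a)$):} reading off the definition of $\G(L,\bar a)$, for each $i$ I conjoin $x_i+a_1=a_1+a_2$, $x_i+a_2=a_1+a_2$, $x_i\cap a_1=a_\bot$ and $x_i\cap a_2=a_\bot$, so that, once $\bar a$ is a frame, this block holds precisely when every $g_i$ lies in $\G(L,\bar a)$. \emph{Block 4 (fixed points):} I simply add the single equation $a_{12}\cap x_1\cap\dots\cap x_k=a_\bot$, which is already in the required form.

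The one block carrying real content is \emph{Block 3 ($\G(L,\bar a)\models\pi(\bar g)$)}. By (iii)(a) the group operation on $\G(L,\bar a)$ is the value of the fixed lattice term $t$, with neutral element $a_{12}$. I would first invoke Corollary~\ref{c0} to arrange that the equations of $\pi$ use only multiplication and the identity, so no inverse operation need be translated; each side of each equation is then a product of variables and occurrences of $e$. Fixing one bracketing, I translate such a word $w(\bar x)$ into a lattice term $\hat w(\bar x,\bar z)$ by replacing each multiplication with an application of $t(\cdot,\cdot,\bar z)$ and each $e$ by $a_{12}$, and I conjoin $\hat u(\bar x,\bar z)=\hat v(\bar x,\bar z)$ for every equation $u=v$ of $\pi$. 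The key point to check is the \emph{evaluation identity}: once $\bar a$ is a $4$-frame and $\bar g\in\G(L,\bar a)$, evaluating $\hat w$ in $L$ at $(\bar g,\bar a)$ returns the value of the group word $w$ at $\bar g$ computed inside $\G(L,\bar a)$. This follows by induction on $w$ from (iii)(a), using that $t(g,h,\bar a)$ is the group product of $g,h$ and that $\G(L,\bar a)$ is closed under it, so that all intermediate values again lie in $\G(L,\bar a)$. Hence, under the frame and membership conditions, the translated equations hold in $L$ if and only if $\G(L,\bar a)\models\pi(\bar g)$.

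Finally I would assemble the equivalence. If $L\models\pi^\#(\bar g,\bar a)$, then Block 1 makes $\bar a$ a $4$-frame, Block 2 puts $\bar g$ in $\G(L,\bar a)$, Block 4 gives $a_{12}\cap\bigcap_{i=1}^k g_i=a_\bot$, and — the frame and membership conditions now being in force — Block 3 yields $\G(L,\bar a)\models\pi(\bar g)$ via the evaluation identity; conversely, if all four right-hand conditions hold, Blocks 1, 2 and 4 are immediate and the evaluation identity turns $\G(L,\bar a)\models\pi(\bar g)$ back into satisfaction of Block 3. I expect the only genuinely delicate point to be the evaluation identity in Block 3: the translated word-term computes the group product correctly \emph{only} on tuples from $\G(L,\bar a)$, which is exactly why Block 2 must be conjoined, and the translation of inverses, which is most cleanly sidestepped by the reduction to multiplication-only equations from Corollary~\ref{c0}. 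Everything else is bookkeeping, and the sole new ingredient beyond the classical frame encoding of Lipshitz and Freese is the harmless conjunct of Block 4, whose geometric meaning is supplied by (iii)(c).
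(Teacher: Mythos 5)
Your construction of $\pi^\#$ is correct and is in fact exactly how the paper conceives item (iv): the paper disposes of it with the single line ``(iv) is obvious by (iii)'', and your block decomposition, together with the induction establishing the evaluation identity (the translated term computes the group word precisely because $\G(L,\bar a)$ is closed under $(g,h)\mapsto t(g,h,\bar a)$, so all intermediate values stay in the group), is a faithful expansion of that remark. Your sidestep of inverses via Corollary~\ref{c0} is also consistent with the paper, which notes after that corollary that one may consider groups just with multiplication; alternatively you could translate inverses directly, since the proof of (iii)(a) exhibits the right inverse of $r$ as the value of a lattice term in $r$ and $\bar a$ (namely $s=\pi^3_2\pi^2_1\pi^1_3(r)$, each $\pi^i_k$ being given by a term), so no reduction of the group signature is actually needed.

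The genuine gap is one of scope: the statement you were asked to prove is the whole of Lemma~\ref{fex}, items (i)--(iv), and your proposal proves only (iv) while invoking (i)--(iii) as black boxes --- you explicitly appeal to (iii)(a) for the term $t$ and to (iii)(c) for the meaning of your Block 4. But (i)--(iii) carry essentially all of the mathematical content of the lemma, and the paper's proof is correspondingly devoted to them: (i) constructs a frame from a direct decomposition $V=\bigoplus_i V_i$ with $a_{1j}$ the graphs of isomorphisms $\vep_j$; (ii) is the collapse argument $a_{12}=a_\bot\Rightarrow a_\bot=a_\top$; (iii)(a) is a von Neumann--style coordinatization in which one defines $r\otimes s$ via $(r_{12}+s_{23})(a_1+a_3)=t_{13}$, verifies the unit and right inverses, and proves associativity by showing four relations (in indices $13$, $14$ via $r_{13}$, and $24$) equivalent under the transport maps $\pi^i_k$; (iii)(b) identifies $\G(L,\bar a)$ with $\GL(a_1)$ via $\Gamma_{\bar a}$, checking $\hat t=h\circ g$; and (iii)(c) --- the paper's own novel amendment, crucial for Lemma~\ref{5} --- characterizes fixed-point-freeness by $a_{12}\cap\bigcap_i g_i=a_\bot$, using directness of $a_1\oplus a_2$ to show that $v-\vep_{\bar a}(v)\in U$ forces $v=f_iv$ for all $i$. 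None of this is argued in your proposal, so as a proof of the stated lemma it is incomplete: what you have established, correctly and in the paper's intended way, is only the bookkeeping reduction (iv) conditional on the coordinatization machinery of (i)--(iii).
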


\begin{proof} 
(i) We may assume $V=\bigoplus_{i=1}^n V_i$
with isomorphisms $\vep_j:V_1 \to V_j$ for $j>1$.
Put $a_i=V_i$, $a_{1j}=\{ v -\vep_j(v)\mid v \in V_1\}$ 
and $a_{kj}= (a_k+a_j)\cap  (a_{1k}+a_{1j})$ for $j\neq k$ in
$\{2,\ldots ,n\}$.

(ii)
  Given a $n$-frame $\bar a$ in $L$ we may assume
$a_\bot=0$ and $a_\top =1$.  
For readability, we write meets as $a \cap b=ab$.
Now, if $a_{12}=0$, then $a_1=a_1(a_{12}+a_2) = a_1a_2=0$ 
and then $a_j=a_j( a_1+a_{1j})=a_j a_{1j}=0$ for all $j>1$. Thus
$a_\top =0$.

(iii) 
We deal with an arbitrary $4$-frame $\bar a$, uniformly,
so that it is obvious which terms govern the construction.
Again, we may assume $a_\bot=0$ and $a_\top=1$.
Let $a_i'=\sum_{j\neq i} a_j$ 
and  observe that, for $i \neq k$,  $a_{ik}\oplus a_i' =1$.
By modularity,  $x \mapsto x+a_{ik}$ and $y \mapsto ya_i'$
are isomorphisms $[0,a_k'] \to [a_{ik},1]$ and  $[a_{ik},1]  \to [0,a_i']$
between intervals 
and compose to the isomorphism
  $\pi^i_k:[0,a_k'] \to [0,a_i']$, that is $\pi^i_k(x)=(x+a_{ik})a_i'$,  
 with inverse $\pi_i^k$.    Observe that $\pi^i_k(a_i)= a_k$
and $\pi^i_k(a_{ij})= a_{kj}$ for $j\neq i,k$.
 Moreover, $\pi^i_k$ is identity on $[0, \sum_{j\neq i,k} a_j]$;
indeed, by modularity, 
 $(x+a_{ik}) a_i' = x+ a_{ik} a_i' =x$ if $x \leq a_i'$.    

Now, let $G_{ij}=\{ x \in L\mid a_i \oplus x= a_j \oplus x = a_i+a_j\}$ 
for $i\neq j$ and observe that $\pi^i_k$ restricts to
a bijection  $\pi^{ij}_{kj}:G_{ij}\to G_{kj}$  for $k\neq i,j$.
Observe that for $x \in G_{ij}$ one has
$x a'_i= x (a_i+a_j)a'_i =x a_j=0$ and, similarly, $x a_j'=0$.

 For $r \in G_{12}$ define
$r_{12}=r$, $r_{1j}= \pi^2_j(r)\in G_{1j}$, and $r_{ij}= \pi^1_i(r_{1j})\in G_{ij}$ 
where $1<i<j$.  Observe that,
for $r =a_{12}$,  this is consistent
with the notation for the $a_{ij}$.
Given  $r,s\in G_{12}$ one has 
$x=(r_{12}+s_{23})(a_1+a_3) \in G_{13}$. Namely, by modularity,
$x a_1  =[r_{12}+s_{23}(a_1+a_2)]a_1
= r_{12} a_1=0$,
$x+ a_1= (r_{12}+ a_1 +s_{23})(a_1+a_3)= (a_1+a_2+s_{23})(a_1+a_3) 
    = (a_1+a_2+a_3)(a_1+a_3)= a_1+a_3$, and,
similarly,   
$x a_3  =0$ and $x+a_3= a_1+a_3$.  
Thus, one obtains a well defined multiplication on $G_{12}$
such that 
\[ r \otimes s =t \quad \Leftrightarrow \quad (r_{12}+s_{23})(a_1+a_3)
=t_{13}.\]
$a_{12}$ is a right unit
since  $(r_{12}+a_{23})(a_1+a_3)=\pi^2_3(r)=r_{13}$.     
 Given  $r \in G_{12}$ \
has image 
$s= \pi^3_2 \pi^2_1 \pi^1_3(r)$ in  $G_{12}$  and,
applying the inverse maps,  it follows
$s_{23}= \pi^1_2 \pi^2_3(s)  = \pi^1_3(r)$.    
Thus, by modularity, $(r_{12}+s_{23})(a_1+a_3)= 
[r+ (r+a_{13})(a_2+a_3)](a_1+a_3) 
= (r+a_{13}) (r+a_2+a_3)(a_1+a_3) 
= [r(a_1+a_3) +a_{13}](a_1+a_2+a_3) 
= a_{13}$ which shows that $s$ is a right inverse of $r$.  
        
In order to establish $G_{12}$ as a group it remains to prove associativity.
Preparing for this, we show for pairwise distinct $i,j,k,h$
\[  \pi^{kj}_{hj}\pi^{ij}_{kj} =\pi^{ij}_{hj},\quad
\pi^{jk}_{hk}\pi^{ij}_{kj}=\pi^{ih}_{kh}\pi^{ji}_{hi}. \]
Indeed, for $x \in G_{ij}$, modularity yields
$[(x+a_{ik})(a_j+a_k) +a_{kh}](a_j+a_h)
= [(x+a_{ik})(a_j+a_k+a_h) +a_{kh}](a_j+a_h)
=(x+a_{ik} +a_{kh})(a_j+a_h)
=[x+(a_{ik} +a_{kh})(a_j+a_i+a_h)](a_j+a_h)
=(x+ a_{ih})(a_i+a_h)$ and 
$[(x+ a_{ik})(a_j+a_k) + a_{jh}](a_h+a_k) 
=[(x+ a_{ik})(a_j+a_k+a_h) + a_{jh}](a_h+a_k) 
=(x+ a_{ik} + a_{jh})(a_h+a_k)=\ldots =
 [(x+ a_{jh})(a_i+a_h) + a_{ik}](a_h+a_k)$.

We now claim that for $r,s,t \in G_{12}$ the following
relations  are equivalent
\[\begin{array}{l}
(r_{12}+s_{23})(a_1+a_3)=t_{13}\\
(r_{12}+s_{24})(a_1+a_4)=t_{14}\\
 (r_{13}+s_{34})(a_1+a_4)=t_{14}\\
(r_{23}+s_{34})(a_2+a_4)=t_{24}
\end{array} \]
whence each equivalent to $t=r \otimes s$.
Namely, the pairs of consecutive   relations are  
equivalent via the isomorphisms $\pi^3_4$, $\pi^2_3$, and $\pi^1_2$,
respectively, which match the elements associated with $r$
and, similarly, for  $s$ and $t$. 
Indeed, 
$\pi^3_4 (s_{23})= \pi^3_4\pi^1_2 \pi^2_3(s_{12})
= \pi^1_2 \pi^3_4 \pi^2_3(s_{12})=
\pi^1_2  \pi^2_4(s_{12})= s_{24}$,
$\pi^3_4(t_{13})= \pi^3_4\pi^2_3(t_{12})= \pi^2_4(t_{12})=
t_{14}$,   $\pi^2_3(s_{24}) =\pi^2_3 \pi^1_2 \pi^2_4(s_{12})
= \pi^1_3 \pi^2_4(s_{12}) =s_{34}$, while
the remaining equalities are obvious.

Now, for $r,s,t \in G_{12}$,
it follows by modularity  $[(r\otimes s)\otimes t]_{14}
=[(r \otimes s)_{13} +t_{34}](a_1+a_4) 
= [(r_{12}+s_{23})(a_1+a_3)+t_{34}](a_1+a_4) 
= [(r_{12}+s_{23})(a_1+a_3+a_4)+t_{34}](a_1+a_4)
= (r_{12}+s_{23}+t_{34})(a_1+a_4)
= [(r_{12}+(s_{23}+t_{34})(a_1+a_2+a_4)](a_1+a_4)
= [(r_{12}+(s\otimes t)_{24}](a_1+a_4)
=[r\otimes (s \otimes t)]_{14}.$\\

Observe that $G_{12}=\G(L,\bar a)$ as sets.
We turn $\G(L,\bar a)$ into the group opposite to $G_{12}$
defining  multiplication as $(g,h)\mapsto t(g,h,\bar a)$  
where $t(x,y,\bar z)$ is the term\\
$\bigl([y+((x+z_{23})(z_1+z_3)+z_{12})(z_2+z_3)](z_1+z_3) +z_{23}\bigr)(z_1+z_2)$.\\

In (b) 
observe that for any $i\neq j$
there is a $1$-$1$-correspondence between
elements  $r \in G_{ij}$ and isomorphisms $f:a_i \to a_j$
given by $f(v)=w$ if and only if $v-w \in r$. 
We write $f=\hat{r}$.
For $s\in G_{jk}$ and $t=(r+s)\cap (a_i+a_k)$ 
it follows $\hat{t}= \hat{s} \circ \hat{r}$. 
In particular we have the
 $\hat{a}_{ij}:a_i \to a_j$ with  $\hat{a}_{ji}=\hat{a}_{ij}^{-1}$ 
and $\hat{a}_{jk} \circ \hat{a}_{ij} =\hat{a}_{ik}$.
We have to put $\vep_{\bar a}=\hat{a}_{12}$.
 Now, given $g,h \in \GL(a_1)$
and $r=\Gamma_ {\bar a}(g)$, $s=\Gamma_ {\bar a}(h)$
one has 
$\hat{r}_{12}= \hat{a}_{12} \circ g$ and
$\hat{s}_{23}=  \hat{a}_{23}\circ \hat{a}_{12}\circ h  \circ \hat{a}_{21}
= \hat{a}_{13}\circ h  \circ \hat{a}_{21}$
whence , for $t= r \otimes s$,
$\hat{t}_{13}= \hat{s}_{23} \circ \hat{r}_{12} 
= \hat{a}_{13}\circ h \circ g$ and $\hat{t}= \hat{a}_{31} \circ \hat{t}_{13}
= h \circ g$.\\

(c)
 Define
\[U=a_{12} \cap \bigcap_{i=1}^k g_i
=   \{ v -\vep_{\bar a} (v) \mid v \in V_1\}
\cap  \bigcap_{i=1}^k \{ v -\vep_{\bar  a}(f_i v)\mid v \in V_1\}.\]
Consider $v$ such that  $v -\vep_{\bar a} v  \in U$.
For  every generator $f_i$ of $G$ there is
   some $w\in V_1$  such that $v -\vep_{\bar a}(v)=
w -\vep_{\bar a}(f_i w)$ whence
$w=v$ and
 $v=f_iv$ since the sum $a_1\oplus a_2$ is direct.
Thus, $v$ is fixed under the action of $G$ on $V_1$.
Conversely, if $v=f_iv$ for all $i$ then
$v -\vep_{\bar a} v  \in U$.
Now, observe that $U=0$ 
if and only if $\{ v \in V_1\mid v -\vep_{\bar a}(v) \in U\}=0$.

(iv) is  obvious by (iii).  
\end{proof}

\section{Consistency in modular lattices}  \lab{MOLs}

\begin{lemma}\lab{5}
There is a recursive set $\Sigma$ of conjunctions
of lattice equations such that the following hold
for any $\phi \in \Sigma$.
 
\begin{enumerate}

\item Given a division ring $F$ and a cardinal $\kappa\geq \aleph_0$.
 If $\phi$ admits a
non-trivial  satisfying assignment in some  modular lattice, then
it does so  within 
 $\lt(V)$
for some $V \in \mc{V}_F$ with $\dim V=\kappa $.
\item 
If $\phi$ admits a
non-trivial  satisfying assignment in the finite modular lattice $L$
and if $F$ is a division ring of  $\chi(F)=0$ or 
$\chi(F)>|L|$ $($it suffices to require
$\chi(F)> |\G(L,\bar a)|$ 
for all $4$-frames $\bar a$ in $L${}$)$ 
 then
$\phi$ admits a
non-trivial  satisfying assignment
 within $\lt(V)$ for some $V\in \mc{V}_F$
with    $\dim V =4d<\aleph_0$ for some $d$.
\item 
$\phi$ is of the   form
$\phi(\bar x,\bar z)$, with $\bar z$ referring to $4$-frames.
$\phi^\exists$ given by
$\exists \bar z \exists \bar x.\; \phi(\bar x,\bar z)
\wedge z_\bot\neq z_\top$
is valid 
within the  modular lattice $L$ if and only if
$\phi$ admits a non-trivial satisfying assignment within $L$.

\item The sets of all $\phi \in \Sigma$ 
with $\phi^\exists$ valid in some, respectively, some finite modular
lattice are not recursive.
\end{enumerate}
The statements remain valid  with  constants $0=z_\bot$
and $1=z_\top$.
 \end{lemma}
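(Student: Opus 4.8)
The plan is to let $\Sigma$ consist of the frame encodings of Lemma~\ref{fex}(iv), but applied not to a given conjunction $\pi$ of group equations itself but to its \emph{doubling}
$\pi'(\bar x,\bar x')=\pi(\bar x)\wedge\pi(\bar x')$, obtained by adjoining a fresh disjoint copy $\bar x'$ of the variables; that is, I put $\phi:=(\pi')^\#$ and let $\Sigma$ be the set of all such $\phi$. Writing $\Gamma=\langle\bar x\mid\pi\rangle$, the group presented by $\pi'$ is the free product $\Gamma*\Gamma$, which is trivial when $\Gamma$ is trivial and \emph{infinite} when $\Gamma$ is non-trivial (for $1\neq a\in\Gamma$ the element built from $a$ in the two factors has infinite order); thus $\pi'$ never presents a finite non-trivial group. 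Both $\pi\mapsto\pi'$ and $\pi'\mapsto(\pi')^\#$ are effective with effective inverses on their images, so $\Sigma$ is recursive. Here $\phi$ has the shape $\phi(\bar x,\bar x',\bar z)$ with $\bar z$ the frame variables, as required in (iii). This doubling is precisely the device that will rescue part~(i) over an \emph{arbitrary} field.

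I would prove (iii) first, as it underlies all else. If $L\models\phi(\bar g,\bar g',\bar a)$ then by Lemma~\ref{fex}(iv) $\bar a$ is a $4$-frame, the entries of $\bar g,\bar g'$ lie in the group $\G(L,\bar a)$ with unit $a_{12}$, and $a_{12}\cap\bigcap g=a_\bot$ over all these generators. If $a_\bot=a_\top$, the frame identities together with Lemma~\ref{fex}(ii) force every $a_i$, $a_{ij}$ and every $g$ down to $a_\bot$, so the assignment generates the singleton $\{a_\bot\}$ and is trivial; conversely, by the contrapositive of Lemma~\ref{fex}(ii), $a_\bot\neq a_\top$ yields $a_{12}\neq a_\bot$, exhibiting a non-trivial assignment. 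Hence non-triviality is equivalent to the clause $z_\bot\neq z_\top$ adjoined in $\phi^\exists$, which is exactly the asserted equivalence; with constants this reads $0\neq1$, giving the bounded version.

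The engine for (i), (ii) and (iv) is the equivalence
\[
\phi\ \text{non-trivial in a (finite) modular lattice}
\quad\Longleftrightarrow\quad
\pi'\ \text{non-trivially satisfiable in a (finite) group.}
\]
Forward, Lemma~\ref{fex}(iv) hands us the group $\G(L,\bar a)$ (finite if $L$ is), in which $\pi'(\bar g,\bar g')$ holds; were all generators equal to the unit $a_{12}$ we would get $a_{12}=a_{12}\cap\bigcap g=a_\bot$, contradicting Lemma~\ref{fex}(ii), so the assignment is non-trivial in $\G(L,\bar a)$. Backward, I take the witnessing group $G$, use Lemma~\ref{gp} to place a fixed-point-free faithful representation of $G$ on a space $V_1$, realise $V_1=a_1$ inside a $4$-frame of $\lt(V)$ with $V=V_1^{4}$ by Lemma~\ref{fex}(i), and invoke Lemma~\ref{fex}(iii)(c) to turn fixed-point-freeness into $a_{12}\cap\bigcap g=a_\bot$; Lemma~\ref{fex}(iv) then gives $\lt(V)\models\phi$ non-trivially.

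From here the four parts fall out, and I expect part~(i) to be the only delicate point. For (i): non-triviality in some modular lattice forces $\Gamma*\Gamma$ non-trivial, hence \emph{infinite}; being finitely presented it is countable, so for every $\kappa\geq\aleph_0$ the regular representation (the infinite case of Lemma~\ref{gp}, which needs \emph{no} hypothesis on $\chi(F)$) is fixed-point-free and faithful in dimension $\dim V_1=\kappa$, and the backward construction realises $\phi$ in $\lt(V)$ with $\dim V=4\kappa=\kappa$. The doubling is essential exactly here: a bare presentation $\pi$ may define a finite non-trivial group such as $\mb Z/p$, which over $\GF(p)$ admits \emph{no} fixed-point-free representation in any dimension, so the undoubled encoding would falsify (i); passing to $\Gamma*\Gamma$ trades non-triviality for infiniteness and removes the characteristic obstruction. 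For (ii) the witnessing subgroup $G=\langle\bar g,\bar g'\rangle\leq\G(L,\bar a)$ is finite with $|G|\leq|\G(L,\bar a)|\leq|L|$, so the hypothesis makes $\chi(F)$ either $0$ or a prime exceeding $|G|$, whence $\chi(F)\nmid|G|$; the finite case of Lemma~\ref{gp} yields a fixed-point-free faithful representation in dimension $|G|-1>0$, so $\dim V=4d$ with $d=|G|-1<\aleph_0$. Finally, by (iii) the two sets in (iv) are the images under the recursive map $\pi\mapsto\phi$ of ``$\pi'$ non-trivially satisfiable in some, respectively some finite, group''; since the all-identity assignment satisfies the second copy, these coincide with the corresponding problems for $\pi$ itself (for the finite version one realises a finite witness $G_0$ over $\GF(\ell)$ with a prime $\ell\nmid|G_0|$, so that $\lt(V)$ is a finite modular lattice). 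Non-recursiveness therefore transfers from Theorem~\ref{p0} (Adyan--Rabin for all groups, Bridson--Wilton for finite groups).
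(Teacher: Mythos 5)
Your proposal is correct, and it takes a genuinely different --- and in one respect sounder --- route than the paper. The paper takes $\Sigma=\{\pi^\#\mid \pi \mbox{ a conjunction of group equations}\}$ with no doubling; its treatment of (iii), (ii) and (iv) coincides with yours (trivial group assignments force $a_{12}=a_\bot$, hence $a_\bot=a_\top$ via Lemma~\ref{fex}(ii); witnesses are realized through Lemma~\ref{gp} and Lemma~\ref{fex}(i),(iii)(c),(iv), with the generated subgroup $\langle\bar h\rangle$ and, in the finite case, a field of characteristic $0$ or exceeding $|L|$). For (i), however, when the witnessing subgroup $H=\langle\bar h\rangle$ is finite, the paper passes to a countably infinite extension $G^*\supseteq H$ and invokes the regular representation of $G^*$, asserting that the $f_i=\rho(h_i)$ generate a subgroup acting fixed point freely. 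That step does not stand as written: Lemma~\ref{fex}(iii)(c) requires fixed-point-freeness of $\rho(H)$, not of $\rho(G^*)$, and $\sum_{h\in H}h$ is a nonzero $H$-fixed vector of the regular representation for every finite $H$, over every field. In characteristic $0$ this is repairable inside the paper's $\Sigma$ (replace the regular representation of $G^*$ by a $\kappa$-fold direct multiple of the finite-case representation of $H$ from Lemma~\ref{gp}), but in positive characteristic your obstruction is real: for $\pi=(x^p=e)$ and $F=\GF(p)$, any $f$ with $f^p=1$ on a nonzero $F$-space satisfies $(f-1)^p=0$ and hence has a nonzero fixed vector, so $\pi^\#$ admits no non-trivial satisfying assignment in any $\lt(V)$ with $V\in\mc{V}_F$, although it admits one over $\mathbb{Q}$; thus (i) as stated fails for the undoubled $\Sigma$, exactly as you observe. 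Your doubling $\pi'=\pi(\bar x)\wedge\pi(\bar x')$ removes the difficulty at its source: a non-trivial witness anywhere makes the presented group $\Gamma*\Gamma$ non-trivial, hence infinite, and the canonical assignment generates the \emph{whole} group, so the regular representation of Lemma~\ref{gp} (which imposes no condition on $\chi(F)$) is fixed point free precisely for the subgroup that Lemma~\ref{fex}(iii)(c) sees; since $\pi\mapsto\pi'$ preserves non-trivial satisfiability in (finite) groups, parts (ii) and (iv) are unaffected (your choice of $G_0=\langle\bar g,\bar g'\rangle$ with $|G_0|\le|\G(L,\bar a)|$, and of $\GF(\ell)$, $\ell\nmid|G_0|$, for the finite case, is exactly what is needed). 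In short: the paper's construction is leaner, but its proof of (i) has a gap that is fatal in prime characteristic --- which matters, e.g., for Theorem~\ref{5c} under richness condition (I) over fields of positive characteristic --- while your doubled encoding buys the characteristic-free statement (i) at no cost elsewhere.
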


\begin{proof}
Let $\Sigma$ consist of all
$\pi^\#(\bar x,\bar z)$, 
according to Lemma \ref{fex}(iv),
with $\pi(\bar x)$ a conjunction of group equations.
We claim:
\begin{itemize}
\item[(*)] $\pi(\bar x)$ admits a non-trivial satisfying  assignment
within some (finite) group $G$ if and only if
$\pi^\#(\bar x,\bar z)$ does so within some (finite) modular lattice $L$;
moreover, given  a non-trivial satisfying  assignment
within $G$, 
  $L$ can be chosen as $\lt(V)$  as in (i) respectively (ii).
\end{itemize}
Clearly,  for  a  
    modular lattice $L$,  $(\bar g,\bar a)$ 
is a satisfying assignment within 
$L$
if and only if $\bar g$
 is a satisfying assignment
for $\pi(\bar x)$ in the   group $G=\G(L,\bar a)$ 
 --- which is finite if $L$ is finite.

If the assignment $\bar g$ in $G$ is trivial, then 
$g_i=e_G=a_{12}$ for all $i$. 
 On the other hand, $a_{12}\cap \bigcap_i g_i=a_\bot$
whence $a_{12}=a_\bot$ and the assignment $(\bar g,\bar a)$ is trivial in view of 
Lemma \ref{fex}(ii). 
In other words, if $\pi(\bar x)$ admits only trivial
satisfying assignments within (finite) groups,
then  $\pi^\#(\bar x,\bar z)$ does so within (finite) modular lattices.

Now,  assume that $\pi(\bar x)$ 
has a non-trivial assignment $\bar h$ in some
(finite) group $G$: in particular $G$ is not the trivial group.
 We have to find assignments in suitable $\lt(V)$.
 We may assume that $G$ is
at most countable. If $G$ is finite, in (i)  we  may 
consider some countably infinite extension, instead. 
 Let $\kappa \geq\aleph_0$ resp. $\kappa=4(|G|-1)$ if $G$ is finite
and choose $V$ of $\dim V=\kappa$ as required in (i) 
and (ii), respectively. 
By Lemma \ref{fex}(i), there is a $4$-frame $\bar a$ of $L=\lt(V)$   
such that $\dim V_1 =\kappa$ for $V_1=a_1$. Fact
\ref{gp} provides a fixed point free faithful
representation $\rho$ of $G$ in $V_1$;
that is, the $f_i=\rho(h_i)$
generate a  subgroup of $\GL(V_1)$ acting fixed point free on $V_1$.
Recall Lemma~\ref{fex}(iii)--(iv) and let $g_i=\Gamma_{\bar a}(f_i)$
to obtain 
 a non-trivial assignment $(\bar g,\bar a)$ in $L$ such  that 
 $L\models \pi^\#(\bar h, \bar a)$.

This proves (*), (i), and (ii).
(iii) is obvious and (iv)
 follows 
from (*),  Theorem \ref{p0},
and the reduction $\pi \mapsto \pi^\#$.
 \end{proof}

In view of these Facts, we consider the following \emph{richness conditions}
on a class $\mc{C}$  of lattices respectively $\mc{V}$ of vector spaces.
\begin{itemize}
\item[(I)]  $\lt(V) \in \Q\mc{C}$ for some vector space $V$
of $\dim V\geq\aleph_0$.
\item[(II$_n$)] For every  $0<d<\aleph_0$ there are 
a division ring $F$ of characteristic not dividing
$d+1$ and  an $F$-vector space $V$ of $\dim V=nd$ such that
 $\lt(V)\in \Q\mc{C}$.
\item[(III$_n$)] $\mc{V}$ consists of finite dimensional
vector spaces over division rings which are finite
dimensional over the center;
moreover, for any $0<d < \aleph_0$
there is $V \in \mc{V}$
of characteristic not dividing $d+1$  such that $\dim V=nd$.
\end{itemize}
Clearly, (III$_n)$ implies (II$_n$) for
$\lt(\mc{V}):=\{\lt(V)\mid
 V\in \mc{V}\}$.
We refer of (II$_4)$ and (III$_4)$
just as (II) and (III); except for Section 9, these 
are the ones to be used.

\begin{theorem}\lab{5c}
The
consistency problem is unsolvable for any
 class $\mc{C}$ of modular lattices
(with or without bounds) 
 satisfying (I) 
or satisfying
 $\mc{C}\subseteq \Q\mc{M}_f$
and (II).
In the bounded  case, unsolvability also persists
in any expansion of $\mc{C}$.\end{theorem}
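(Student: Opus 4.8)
The plan is to reduce directly from the unsolvability of the group consistency problem (Theorem~\ref{p0}) using the apparatus already assembled in Lemma~\ref{fex} and Lemma~\ref{5}. The recursive set $\Sigma$ of lattice equations supplied by Lemma~\ref{5} is exactly the encoding device needed: for each conjunction $\pi(\bar x)$ of group equations we have an effectively computable $\phi=\pi^\#(\bar x,\bar z)\in\Sigma$, and by property (*) in the proof of Lemma~\ref{5}, together with clauses~(i) and~(ii), the formula $\pi$ has a non-trivial satisfying assignment in some (finite) group if and only if $\phi$ has a non-trivial satisfying assignment in a subspace lattice $\lt(V)$ of the kind guaranteed by the richness hypothesis. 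So the whole task is to check that the richness conditions (I), respectively $\mc{C}\subseteq\Qm$ together with (II), deliver a lattice \emph{inside $\Q\mc{C}$} in which the witnessing assignment lives, and then to invoke Fact~\ref{tri} to pull the assignment back from $\Q\mc{C}$ to $\mc{C}$ itself.

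First I would treat the case (I). Here $\lt(V)\in\Q\mc{C}$ for some $V$ with $\dim V\geq\aleph_0$; fixing any division ring $F$ with $\lt(V)$ an $F$-space, Lemma~\ref{5}(i) provides, whenever $\phi$ is satisfiable non-trivially in some modular lattice, a non-trivial satisfying assignment in $\lt(W)$ for $W\in\mc{V}_F$ of the prescribed infinite dimension $\kappa=\dim V$, and $\lt(W)\cong\lt(V)\in\Q\mc{C}$. Thus $\phi$ is non-trivially satisfiable in $\Q\mc{C}$, hence in $\mc{C}$ by Fact~\ref{tri}. Next I would treat the finite case $\mc{C}\subseteq\Qm$ with (II): given $\phi=\pi^\#$ non-trivially satisfiable, property (*) yields a non-trivial group assignment in a \emph{finite} group $G$, so I set $d=|G|-1$; condition (II) furnishes a division ring $F$ whose characteristic does not divide $d+1=|G|$ and a space $V$ of $\dim V=4d$ with $\lt(V)\in\Q\mc{C}$. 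This is precisely the numerical hypothesis of Lemma~\ref{5}(ii) (via $\chi(F)\nmid|G|$ feeding Lemma~\ref{gp}), so $\phi$ is non-trivially satisfied in $\lt(V)\in\Q\mc{C}$, and again Fact~\ref{tri} transfers this to $\mc{C}$. In both directions the converse is trivial, since any non-trivial assignment in $\mc{C}$ is in particular one in some modular lattice; combining with clause~(iv) of Lemma~\ref{5}, which says the satisfiability sets are non-recursive, we conclude the consistency problem for $\mc{C}$ is unsolvable.

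Finally, for the bounded expansion claim I would appeal to the general principle recorded in Section~\ref{AlgebraicStructures}: unsolvability is inherited by expansions provided trivial assignments within the base class force trivial algebras. In the bounded modular lattice setting a trivial assignment requires $0=1$, i.e.\ the one-element lattice, so any expansion (for instance by further operations) still has its trivial assignments confined to the trivial algebra, and unsolvability persists. Concretely the parenthetical variant of Lemma~\ref{5}, which holds with constants $0=z_\bot$, $1=z_\top$, produces the encoding already in the bounded signature, so the same reduction applies verbatim.

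I expect the only genuine obstacle to be bookkeeping rather than mathematics: one must be careful that the dimension and characteristic demanded by Lemma~\ref{5}(ii) match what (II) supplies for the \emph{specific} finite group $G$ extracted from the assignment, and that $\lt(V)\in\Q\mc{C}$ (not merely $\lt(V)\in\mc{C}$) is all that is needed — which is exactly why Fact~\ref{tri} is invoked. The hard conceptual work, namely the frame interpretation and the fixed-point-free representation that make the encoding $\pi\mapsto\pi^\#$ faithful on non-trivial assignments, has already been carried out in Lemmas~\ref{gp},~\ref{fex}, and~\ref{5}, so the proof of Theorem~\ref{5c} is essentially an assembly of these components.
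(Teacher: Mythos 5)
Your proposal is correct and follows essentially the same route as the paper: the paper's own proof of Theorem~\ref{5c} is precisely this assembly of Lemma~\ref{5} (in case (II) via its \emph{proof} --- property (*), extracting the finite group $G$, setting $d=|G|-1$, and feeding $\chi(F)\nmid |G|$ into Lemma~\ref{gp} --- rather than the literal statement of part (ii), a quantifier mismatch you rightly identify and resolve), the richness conditions, Fact~\ref{tri}, and the expansion principle of Section~\ref{AlgebraicStructures}. One small repair: in case (II) the direction from $\mc{C}$ back to \emph{finite} modular lattices is not ``trivial'' as you assert, since a member of $\mc{C}\subseteq\Qm$ may well be infinite; it needs a second application of Fact~\ref{tri}, this time to the generating class $\mc{M}_f$, which is exactly what the paper's remark ``in case (II) use Fact~\ref{tri}'' covers.
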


In particular, in Corollary \ref{lip3}, below,
we  will show that case (II)  applies to
$\mc{C}=\lt(\mc{V})$ where $\mc{V}$ satisfies (III).

\begin{proof}
In view of Lemma \ref{5} and the richness condition
a conjunction of lattice equations admits
a non-trivial satisfying assignment within $\mc{C}$
if and only if it does so within some (finite)
modular lattice --- in case (II) use Fact \ref{tri}.
 \end{proof}

We conclude the section discussing restricted variants of the
consistency problem for modular lattices. These
are not needed for the applications to other structures.

\begin{corollary}\lab{five}
The decision problems of  Theorem \ref{5c}  
remain unsolvable if restricted to
conjunctions $\pi(x_1, \ldots ,x_5)$ of  equations.
\end{corollary}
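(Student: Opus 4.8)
The plan is to reduce the number of variables by showing that the entire system $\phi(\bar x,\bar z)\in\Sigma$, together with the frame constraints, can be encoded using only five lattice variables. The witness assignment in Lemma~\ref{5} uses a $4$-frame $\bar a=(a_1,a_2,a_3,a_4,a_{12},a_{13},\dots,a_\bot,a_\top)$ together with generator elements $g_1,\dots,g_k\in\G(L,\bar a)$; this is far more than five variables. The key observation is that all of these are \emph{definable} from a small generating set inside the frame-generated sublattice. Concretely, a $4$-frame is generated as a lattice by the four ``axis'' elements $a_1,a_2,a_3,a_4$ together with a single ``diagonal'' $a_{12}$: the remaining $a_{ij}$ are recovered by the frame identity $a_{ik}=(a_i+a_k)\cap(a_{1i}+a_{1k})$-type expressions used in Lemma~\ref{fex}(i), and the bounds are $a_\bot=a_1\cap a_2$, $a_\top=a_1+a_2+a_3+a_4$. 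So the frame itself costs at most five variables once we agree to view $a_{ij}$, $a_\bot$, $a_\top$ as lattice terms in $a_1,a_2,a_3,a_4,a_{12}$.

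The next step is to absorb the group generators $g_1,\dots,g_k$. Here I would use the fact, implicit in Lemma~\ref{fex}(iii), that the multiplication $t(x,y,\bar z)$ on $\G(L,\bar a)$ is a lattice term in the frame, so any element of the subgroup generated by finitely many $g_i$ is a lattice term in the $g_i$ and $\bar a$. The difficulty is that $k$ may be large, so I cannot simply name each $g_i$ by its own variable. The idea is to encode the tuple $(g_1,\dots,g_k)$ by a single additional frame element: pass from the original group presentation $\pi(x_1,\dots,x_k)$ to an equivalent one on a single generator's worth of data by enlarging the frame dimension, or — more cheaply — reuse the four-dimensionality to store several group elements as independent elements of $\G(L,\bar a)$ in disjoint coordinate blocks. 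Thus I would work with a frame of dimension $4$ over a space $V=V_1^{\oplus 4}$ where $\dim V_1$ is a multiple of $k$, split $V_1=W_1\oplus\cdots\oplus W_k$ into $G$-invariant blocks, and let a single lattice element $g$ record the ``block-diagonal'' transformation whose $i$-th block is $f_i$. The individual $g_i$ are then recovered from $g$ by meeting with the (frame-definable) coordinate projections, so one variable suffices for all generators.

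Assembling these reductions, the five variables would be $x_1,\dots,x_5$ interpreted respectively as $a_1,a_2,a_3,a_4$ and one further element carrying both the diagonal $a_{12}$ and the packed group data $g$; every other quantity appearing in $\pi^\#$ becomes a lattice term in $x_1,\dots,x_5$, so each conjunct of $\pi^\#$ rewrites as a lattice equation in these five variables alone. Non-triviality, $z_\bot\neq z_\top$, and fixed-point-freeness are all expressed by term-equations in the same five variables, and the construction $\pi\mapsto\pi^\#$ remains effective. Unsolvability for the five-variable fragment then follows from Theorem~\ref{5c} exactly as in the proof of Lemma~\ref{5}(iv), via Theorem~\ref{p0}.

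I expect the main obstacle to be the block-packing of the generators: one must verify that the block-diagonal element genuinely lies in $\G(L,\bar a)$, that the projections used to recover $f_i$ are themselves frame-definable lattice terms, and — most delicately — that fixed-point-freeness of the \emph{joint} action (needed so that $a_{12}\cap\bigcap_i g_i=a_\bot$ forces triviality) is preserved under this packing. Once the definability of $a_{ij},a_\bot,a_\top$ and of each $g_i$ from the five chosen elements is checked, the rest is the routine rewriting of $\pi^\#$ into a five-variable conjunction and an appeal to the established reduction.
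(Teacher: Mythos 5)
Your overall strategy---generate the frame from few variables and pack all the group generators into a single extra variable---is the same as the paper's, but both concrete mechanisms you propose fail. First, a $4$-frame is \emph{not} lattice-generated by $a_1,a_2,a_3,a_4,a_{12}$: the diagonals $a_{13},a_{14}$ cannot be recovered by terms from these five elements. In $\lt(V)$ with $V=V_1^4$, the automorphism scaling $V_3$ by $\lambda\neq 1$ and fixing $V_1,V_2,V_4$ pointwise fixes all five subspaces setwise, hence fixes every element of the sublattice they generate; but (taking $\lambda$ of infinite multiplicative order) its invariant subspaces of $a_1+a_3$ are exactly those of the form $W_1\oplus W_3$ with $W_i\subseteq V_i$, and none of these is a common complement of $a_1$ and $a_3$. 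So no term in your five elements defines an admissible $a_{13}$, and the frame is not determined. The paper circumvents this not by frame identities but by invoking the nontrivial theorem of \cite{quad} that the modular lattice freely generated by a $(k+1)$-frame is finitely presented on \emph{four} generators, the whole frame being given by a system of terms $\bar b(\bar y)$ in $\bar y=(y_1,y_2,y_3,y_4)$---a genuinely different and much deeper fact than the identity $a_{ik}=(a_i+a_k)\cap(a_{ij}+a_{jk})$.

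Second, the block-diagonal packing of $g_1,\ldots,g_k$ inside $V_1=W_1\oplus\cdots\oplus W_k$ does not work: the internal decomposition of $V_1$ is invisible to the frame (the ``coordinate projections'' you need are not frame-definable, by the same automorphism argument), the meets you propose return graphs of partial maps which are not elements of $\G(L,\bar a)$ at all (they fail $g+a_1=a_1+a_2$), and splitting the $f_i$ into separate blocks destroys both the group relations, which concern the joint action of all $f_i$ on one space, and the fixed-point-freeness condition $a_{12}\cap\bigcap_i g_i=a_\bot$. Moreover, your recovery arguments are model-specific to $\lt(V)$, whereas the reduction must also run backwards from an \emph{arbitrary} (finite) modular lattice satisfying the five-variable conjunction; everything must therefore be enforced by equations valid in all modular lattices. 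The paper's packing is purely lattice-theoretic: taking a $(k+1)$-frame $\bar b$ with $4$-subframe $\bar a$, each $g_i\in\G(L',\bar a)$ is transported into the plane $b_1+b_{i+1}$ via $g_i'=(b_1+b_{i+1})\cap(b_{2,i+1}+g_i)$, and the independence of these planes yields $g_i'=(b_1+b_{i+1})\cap c$ for $c=\sum_{i=1}^{k}g_i'$, so the single fifth variable $y_5=c$ recovers every $g_i$ by terms. Finally, you give no mechanism by which one element could carry ``both'' the diagonal $a_{12}$ and the packed datum $g$; in the paper $a_{12}$ comes for free as a term $b_{12}(\bar y)$, which is precisely why the citation of \cite{quad} is indispensable.
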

\begin{proof}
Recall from \cite{quad} 
that the modular lattice  freely generated by a $(k+1)$-frame
is  finitely presented as a modular lattice
with four generators, the frame given by a system
$\bar b(\bar y)$ of terms,  $\bar y=(y_1,y_2,y_3,y_4)$,
 and finitely many  relations. 
Dealing with a conjunction of group equations in $k$ variables $\bar x$, 
encode these adding to $\bar y$ a single lattice variable $y_5$ 
and finitely many relations. Namely,
considering a $(k+1)$-frame $\bar b$ in a modular lattice $L$, 
let the $4$-frame  $\bar a$  given by the $b_i,b_{1j}$,
$i,j \leq 4$ and $L'=[0,\sum_i a_i]$. Then the $x_i$
correspond to  $g_1, \ldots ,g_k$ in $\G(L',\bar a)$. Let    $g'_1=g_1$ and 
$g'_i= (b_1+b_{i+1})\cap (b_{2,i+1}+g_i)$ for $i>1$.
Then $g_i= (b_1+b_{i+1})\cap (b_{2,i+1}+g'_i)$ for $i>1$ 
and  $g_i'= (b_1+b_{i+1})\cap c$ where
$c=  \sum_{i=1}^{k} g_i'$. Introducing the variable $y_5$ 
for $c$ and the associated equations,
  this yields  the conjunction $\psi$ of
$5$-variable lattice  equations  
replacing  $ \pi^\#$ from Lemma \ref{fex}(iv).  
In view of Lemma~\ref{fex}(ii), $a_\bot=a_\top$ implies 
$b_\bot=b_\top$.
 \end{proof}

For a  field $F$ and $\mc{V}=\{F^d_F\mid d<\aleph_0\}$,
 if satisfiability of conjunctions of ring equations 
is decidable for $F$, then the reasoning of \cite[Theorem 4.10]{jacm}
shows that the  consistency problem for $\lt(\mc{V})$ is
solvable if and only if 
 there is a  recursive function $\delta$
that for every  conjunction 
   $\psi$ of lattice equations  one has the following:  
If $\psi$ is  
of binary length $n$ and  satisfiable in $\lt(F^d_F)$ for some $d$
then $\psi$ is also satisfiable in  
$\lt(F^d_F)$ for some  $d \leq \delta(n)$.
By  Theorem \ref{5c}, no such $\delta$ exists if
$F$ is the field of real or complex numbers.

 On the other hand,
in the presence of an orthocomplementation,
Example 4.2(b) in \cite{jacm} gives a recursively defined sequence
$t_k(\bar x)$ of terms  of length $\mc{O}(k)$ in $2k+1$ variables 
such that $t_k(\bar x)=\One$ is satisfiable in  $\lt(F^d_F)$  if $d=2^k$ but not for $d<2^k$.
We provide an analogous recursive sequence   without orthocomplementation
and with fixed number of variables. 

In \cite{group} the \emph{bit length}
of a group presentation is defined as the total number of bits
required  to write the presentation;
in particular, words are
considered  as  strings of powers of generators
and inverses of generators, the exponents 
encoded in binary.  Transferring this to lattice presentations,
we allow the use of recursively defined subterms, encoding
the number of iteration steps in binary.

\begin{corollary}\lab{fast}  There
is a recursively defined sequence 
of  conjunctions $\psi_n(\bar y)$, $n >7$, 
of bounded lattice equations 
 in $5$ variables  $\bar y$
such that $\psi_n$ is  of bit length $\mc{O}(\log n)$
and such that, for any field of $F$ of characteristic $0$,
$\psi_n(\bar y)$  is satisfiable in some $\lt(V_F)$,
with $\dim V_F=d>0$  for $d=4(n-1)$ but
not for $d<4(n-1)$.
\end{corollary}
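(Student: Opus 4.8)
The plan is to instantiate the encoding of Lemma~\ref{fex}(iv) and Corollary~\ref{five} at the alternating group $A_n$ equipped with its standard representation on $\{v\in F^n:\sum_i v_i=0\}$. Three features of $A_n$ are decisive. First, $A_n$ is simple for $n\ge 5$, so any nontrivial representation is automatically faithful---this is what rules out the small ``sign-type'' escapes (a $1$-dimensional fixed-point-free sign representation) that would defeat the same construction for $S_n$. Second, for $n>7$ the smallest nontrivial representation is exactly the standard one, of dimension $n-1$; being defined over the prime field, it remains minimal over every characteristic-$0$ field. Third, the standard representation is fixed-point-free precisely when $\chi(F)$ does not divide $n$, which for $\chi(F)=0$ always holds and is exactly the hypothesis supplied by Lemma~\ref{gp}. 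The factor $4$ in the target dimension $4(n-1)$ is the frame factor $\dim V=4\dim a_1$, while the factor $n-1$ is the representation dimension $\dim a_1$.

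Concretely, I would take $\psi_n=\pi_n^{\#}$, the lattice conjunction of Lemma~\ref{fex}(iv) attached to a presentation $\pi_n$ of $A_n$, augmented by $z_\bot\neq z_\top$. Thus $\psi_n$ asserts that $\bar b(\bar y)$ is a nontrivial $4$-frame, that the encoded elements $\bar g$ satisfy the defining relations of $A_n$ inside $\G(L,\bar a)\cong\GL(a_1)$, and that $a_{12}\cap\bigcap_i g_i=a_\bot$. By Lemma~\ref{fex}(iii)(c) the last conjunct forces $\langle\bar g\rangle$ to act fixed-point-freely, hence nontrivially, on $a_1$; by simplicity this action is faithful, and since for $n>7$ every nontrivial representation of $A_n$ has dimension $\ge n-1$, any satisfying assignment in $\lt(V_F)$ obeys $\dim a_1\ge n-1$ and therefore $\dim V_F\ge 4(n-1)$. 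Conversely, feeding the standard representation into Lemma~\ref{fex}(i),(iii) produces a satisfying assignment in $\lt(F^{4(n-1)}_F)$, uniformly in the characteristic-$0$ field $F$. This delivers satisfiability at $d=4(n-1)$ and unsatisfiability for all $d<4(n-1)$.

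The principal difficulty---and the real content of the corollary---is to keep $\psi_n$ within five variables and bit length $\mc{O}(\log n)$, the unorthocomplemented counterpart of the recursive family in \cite[Example~4.2(b)]{jacm}. Two lengths must be tamed. On the group side, I would use a short presentation of $A_n$ whose words are products of generator powers with the exponents written in binary in the sense of \cite{group}, so that $\pi_n$ has bit length $\mc{O}(\log n)$. On the lattice side, the frame terms $\bar b(\bar y)$ of \cite{quad} and the translation $\pi\mapsto\pi^{\#}$ must be emitted without listing the $\Theta(n)$ individual frame relations: the plan is to assemble the underlying frame by binary doubling, defining $\bar b$ through recursively defined subterms whose iteration depth is recorded in binary, and to generate the relators of $A_n$ from recursively defined families of conjugates rather than one relator per index, then to compress the whole into the five variables of Corollary~\ref{five}. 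I expect the delicate point to be the correctness of these recursions: one must verify that the recursively defined subterms present the full frame and the \emph{complete} defining set of $A_n$---so that the encoded group is genuinely $A_n$ and not a proper quotient or a larger overgroup---thereby pinning the minimal satisfying dimension to exactly $4(n-1)$.
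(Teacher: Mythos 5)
Your proposal follows the paper's own proof essentially verbatim: the short $3$-generator presentations of $A_n$ of bit length $\mc{O}(\log n)$ from \cite{group}, the encoding $\pi_n^\#$ of Lemma~\ref{fex}(iv) with the fixed-point-free conjunct, the Wiman bound $\geq n-1$ for the converse direction (the paper routes this through Maschke's theorem to reduce an arbitrary non-trivial representation to a non-trivial irreducible summand, which your direct appeal to minimal non-trivial degree implicitly presupposes), and the $5$-variable compression via Corollary~\ref{five}. Two minor deviations worth noting: the paper keeps $\psi_n$ a genuine conjunction of equations by taking $z_\bot=0$ and $z_\top=1$ as constants rather than adjoining a disequation $z_\bot\neq z_\top$ (which would leave the equational format and is unnecessary, since $\lt(V_F)$ with $d>0$ automatically has $0\neq 1$); and your worry about $\Theta(n)$ frame relations and ``binary doubling'' of the frame is moot --- the presentation has three generators, so a fixed $4$-frame ($13$ variables before compression) suffices, and the recursively-defined-subterm device is needed only to translate group words with binary exponents into lattice terms of proportionally bounded extended length, via the constant $K$ from Lemma~\ref{fex}(iii)(a).
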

\begin{proof}
By \cite[Theorem C]{group}
the alternating  groups $A_n$, $n >7$,
have presentations of
bit length $ \mc{O}(\log n)$ in  $3$ generators $\bar x=(x_1,x_2,x_3)$;
and any non-trivial  irreducible  representation of $A_n$ has degree 
$\geq n-1$ \cite{wiman}. 
Put $z_\bot=0$, $z_\top=1$ and 
define  $\psi_n$
as $\pi_n^\#(\bar x,\bar z)$
associated with such  presentation of $A_n$
according to Lemma \ref{fex}(iv). 
 By  Lemma \ref{fex}(iii)(a) there is a constant $K$
such that  for every group word $w(\bar x)$ 
one has a  lattice term $w_{\bar z}(\bar x)$ 
(in the extended sense)  
 such that $|w_{\bar z}(\bar x)|
\leq K|w(\bar x)|$
and $w_{\bar a}(\bar x)$ evaluates as $w(\bar x)$  in any $\G(L,\bar a)$.
Use the proof of Corollary \ref{five} to replace the  
$13$ variables $\bar x,\bar z$ by $5$ new ones. 

Now recall Lemma \ref{fex}(iii) and observe that for any $4$-frame $\bar a$ of $\lt(V)$
and subgroup $G$ of $\GL(a_1)$  one has a $G$-invariant subspace 
$U_1=\{ v\in a_1\mid v -\vep_{\bar a}(v) \in U\}$ of $V_1=a_1$ 
where $U$ is as in the proof of Lemma \ref{5}.
Now,  $U_1=0$ if and only if $U=0$.
Thus, any non-trivial irreducible representation of $A_n$ in some $V_1$ gives
rise to a non-trivial satisfying assignment for $\psi_n$
in $L(V)$, $V=V_1^4$.  
Conversely, any non-trivial satisfying assignment $\bar g, \bar a$
 for $\psi_n$ in some $L(V)$, $V$ a finite dimensional $F$-vector space,
we may assume $a_\bot=0$ and $a_\top=V$ and
$\bar g$ defines a non-trivial representation
of $A_n$ in $V_1=a_1$ which, by Maschke's Theorem,
has a non-trivial direct summand, whence $\dim V \geq 4(n-1)$. 
\end{proof}

\section{Relation Algebras} \lab{RelationAlgebras}

A \emph{pre-relation algebra}
is an algebraic structure $A$ with two binary operations
written as $\cap$ and $\circ$, a unary operation $^{-1}$,
 and constant $\Delta$.
We write $\alpha \in \Eq(A)$ if 
$\Delta \cap \alpha =\Delta$, $\alpha^{-1}=\alpha$, and
$\alpha \circ \alpha =\alpha$.
We also consider the partial algebra
 $A^\#$ where $\circ$ is replaced by the partial
operation given by
$\alpha+ \beta = \gamma$  if and only if $\alpha, \beta \in \Eq(A)$
and $\alpha \circ \beta =\beta\circ \alpha =\gamma$.
We write $\alpha\oplus \beta =\gamma$ if   
$\alpha+ \beta =\gamma$ and $\alpha \cap \beta =\Delta$.
A system $\bar \alpha$  in $\Eq(A)$ is a \emph{permuting $4$-frame}
of $A$ 
if the  equations defining a $4$-frame in  a lattice
are satisfied by $\bar \alpha$, being  evaluated   within $A^\#$,
 and if $\alpha_\bot=\Delta$.
Define
 \[\G(A,\alpha)=\{\beta \in \Eq(A)\mid \beta\oplus \alpha_1=
\beta\oplus \alpha_2=\alpha_1\oplus\alpha_2\} \]
Given a set $S$ we consider  
the pre-relation algebras  on  sets of  binary relations 
on $S$ with the following operations: intersection $\cap$,
relational product $\circ$,  inversion $^{-1}$,
and $\Delta=\id_S$. We say that  $A$ is  \emph{represented} on $S$
- J\'{o}nsson \cite{jon}  calls $A$ an \emph{algebra of relations}. 
Let $\mc{R}$  denote the class of all
 algebras isomorphic to such ---
the class  of \emph{representable}
pre-relation algebras; $\mc{R}$ is quasi-variety by 
\cite[Theorem 1]{jon}. By $\mc{R}_f$ we denote the class of 
finite members of $\mc{R}$. 
The following are  immediate by \cite[Corollary 2]{hcom} 
and Lemma \ref{fex}(i).

\begin{fact}\lab{8}
If $\alpha$ is a permuting $4$-frame of
 $A$ which is represented on $S$,
then the subalgebra $L=\lt(A,\alpha)$ generated by $\alpha$ together
with $\G(A,\alpha)$ consists of pairwise 
permuting equivalence relations on $S$;
with operations $\cap$ and $+$ it forms 
a modular sublattice ${\sf L}(A,\alpha)$  of $\Eq(S)$;
in particular, $L\subseteq \Eq(A)$ and $L^\#=L$.
Moreover, $\alpha$ is a $4$-frame of $L$
and $\G(A,\alpha)=\G(L,\alpha)$. 
\end{fact}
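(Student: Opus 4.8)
The plan is to trace through the definitions and verify that the relation-algebraic frame $\bar\alpha$ of $A$, once we pass to the partial algebra $A^\#$, behaves exactly like a lattice frame, so that the cited result \cite[Corollary 2]{hcom} applies. First I would fix the representation of $A$ on $S$ as binary relations, so that $\cap$ is intersection, $\circ$ is relational product, $^{-1}$ is converse, and $\Delta=\id_S$. The hypothesis that $\bar\alpha$ is a \emph{permuting} $4$-frame means that each $\alpha_i,\alpha_{ij}$ lies in $\Eq(A)$, i.e.\ is a genuine equivalence relation on $S$, and that the frame equations hold when evaluated in $A^\#$; the key point is that the join in these equations is the partial operation $+$, which by definition equals $\alpha\circ\beta=\beta\circ\alpha$ whenever the two operands permute. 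Thus on the frame elements the relational product coincides with the equivalence-relation join in $\Eq(S)$.

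Next I would invoke \cite[Corollary 2]{hcom}, which is precisely the transfer result telling us that the subalgebra generated (under $\cap$ and the partial $+$) by a permuting frame together with $\G(A,\alpha)$ consists of \emph{pairwise permuting} equivalence relations on $S$. This is the crux: once all the generators pairwise permute, every relational product of two of them is symmetric and idempotent, hence again an equivalence relation, and $\alpha\circ\beta$ is exactly the join $\alpha+\beta$ in $\Eq(S)$. Consequently the generated set $L$ is closed under $\cap$ and $+$, lies inside $\Eq(A)$ (all its elements satisfy $\Delta\cap\alpha=\Delta$, $\alpha^{-1}=\alpha$, $\alpha\circ\alpha=\alpha$), and satisfies $L^\#=L$ because the partial operation $+$ is everywhere defined on $L$. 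That $L={\sf L}(A,\alpha)$ is a \emph{modular} sublattice of $\Eq(S)$ then follows from the cited fact that any lattice of permuting equivalences is modular \cite{jon2}.

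It remains to check the two concluding assertions. For ``$\alpha$ is a $4$-frame of $L$'': since the frame equations were assumed to hold in $A^\#$ and $+$ agrees with the genuine join in $\Eq(S)$ on the permuting elements of $L$, the same equations hold verbatim in the lattice $L$; moreover $\alpha_\bot=\Delta=\id_S$ is the bottom of the sublattice and $\alpha_\top=\sum_\ell\alpha_\ell$ its top, so $\bar\alpha$ is a frame \emph{of} $L$ and not merely \emph{in} it. For the identity $\G(A,\alpha)=\G(L,\alpha)$, I would simply compare the two defining conditions: the relation-algebra side requires $\beta\in\Eq(A)$ with $\beta\oplus\alpha_1=\beta\oplus\alpha_2=\alpha_1\oplus\alpha_2$ evaluated via $\circ$ and $\cap$, while the lattice side requires $\beta\in L$ with the same complementation conditions evaluated via $+$ and $\cap$; since $+$ and $\circ$ coincide on permuting equivalences and every element of either set is an equivalence relation permuting with $\alpha_1,\alpha_2$, the two conditions are literally the same. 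The main obstacle, such as it is, is bookkeeping rather than substance: one must be careful that the join symbol in the frame axioms is interpreted as the partial $+$ of $A^\#$ and that \cite[Corollary 2]{hcom} is exactly what guarantees pairwise permutability of the \emph{generated} sublattice (not just of the generators), since it is this closure property that makes $+$ total on $L$ and hence makes $L$ an honest sublattice of $\Eq(S)$.
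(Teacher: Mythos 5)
Your proposal is correct and takes essentially the same route as the paper, which derives the Fact as immediate from \cite[Corollary 2]{hcom} (pairwise permutability of the sublattice generated by the frame together with $\G(A,\alpha)$) combined with modularity of lattices of permuting equivalences \cite{jon2} --- exactly the two pillars of your argument. The remaining bookkeeping you spell out (coincidence of the partial $+$ with $\circ$ on permuting equivalences, closure of $L$ making $+$ total so that $L^\#=L$, identification of $\Delta$ and $\alpha_\top$ as the bounds, and the literal comparison of the defining conditions for $\G(A,\alpha)$ and $\G(L,\alpha)$) is the routine verification the paper leaves implicit.
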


\begin{fact}\lab{10}
For any vector space  $V$ of $\dim V =4d$   
there is a pre-relation algebra $A=\A(V)$ 
represented on $V$ 
and a permuting $4$-frame
of $A$  
such that  $\lt(V)={\sf L}(A,\alpha)$. 
\end{fact}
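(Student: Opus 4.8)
The plan is to realize $\lt(V)$ concretely as a lattice of permuting equivalences on the set $V$ and then read off the pre-relation algebra from it. First I would identify each subspace $U$ of $V$ with the equivalence relation $\th_U=\{(x,y)\mid x-y\in U\}$ on $V$; as recalled before Fact~\ref{8}, $U\mapsto\th_U$ is a lattice isomorphism of $\lt(V)$ onto a lattice of permuting equivalences, carrying the subspace sum to the relational product (so that $\th_U\circ\th_W=\th_{U+W}$) and intersection to intersection, with $\th_{\{0\}}=\id_V$.

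Next I would take $\A(V)$ to be the set $\{\th_U\mid U\in\lt(V)\}$ equipped with $\cap$, $\circ$, inversion, and $\Delta=\id_V$. The identities $\th_U\circ\th_W=\th_{U+W}$, $\th_U\cap\th_W=\th_{U\cap W}$, $\th_U^{-1}=\th_U$, and $\th_{\{0\}}=\id_V$ show this set is closed under all four operations, so $\A(V)$ is a pre-relation algebra represented on $V$; moreover each of its elements is reflexive, symmetric and idempotent, so that $\Eq(\A(V))=\A(V)$ and all elements permute. By Lemma~\ref{fex}(i) (with $n=4$) there is a $4$-frame $\bar a$ of $\lt(V)$; setting $\alpha_i=\th_{a_i}$, $\alpha_{ij}=\th_{a_{ij}}$, $\alpha_\bot=\th_{a_\bot}=\Delta$, the isomorphism $U\mapsto\th_U$ turns the frame equations for $\bar a$ into the corresponding equations evaluated in $\A(V)^\#$, so $\alpha$ is a permuting $4$-frame of $\A(V)$.

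It then remains to identify ${\sf L}(\A(V),\alpha)$ with $\lt(V)$. One inclusion is immediate: by Fact~\ref{8} one has ${\sf L}(\A(V),\alpha)\sub\Eq(\A(V))=\A(V)$, which is exactly $\lt(V)$ under the identification above. For the reverse inclusion I would invoke the coordinatization supplied by \cite[Corollary 2]{hcom}: by Fact~\ref{8} one has $\G(\A(V),\alpha)=\G(\lt(V),\alpha)$, and by Lemma~\ref{fex}(iii)(b) this group is all of $\GL(a_1)$ via $f\mapsto\Gamma_{\bar a}(f)$; the assertion that a $4$-frame together with its full coordinate group generates the entire subspace lattice is precisely the content of that corollary. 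Combining the two inclusions yields $\lt(V)={\sf L}(\A(V),\alpha)$, as required.

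The main obstacle is this last step: showing that the frame $\alpha$ together with the group $\G(\A(V),\alpha)$ generates all of $\lt(V)$ as a lattice, rather than merely a proper sublattice. This is the genuine coordinatization content and is exactly what I would cite from \cite[Corollary 2]{hcom}; the remaining steps are the routine verifications that the permuting-equivalence picture of $\lt(V)$ constitutes a represented pre-relation algebra carrying the required permuting $4$-frame.
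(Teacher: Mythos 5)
Your proposal is correct and follows essentially the same route as the paper, whose entire proof of Fact~\ref{10} is the remark that it is immediate from \cite[Corollary 2]{hcom} together with Lemma~\ref{fex}(i): you reconstruct exactly that argument, using the representation $U\mapsto\th_U$ of $\lt(V)$ as permuting equivalences recalled in Section~\ref{Frames}, transporting the $4$-frame supplied by Lemma~\ref{fex}(i), and delegating the one genuinely non-trivial step --- that the frame together with $\G(\A(V),\alpha)\cong\GL(a_1)$ generates all of $\lt(V)$ --- to the same citation \cite[Corollary 2]{hcom} the authors invoke. You also correctly identify this generation step as the crux rather than treating it as routine, which matches the division of labor in the paper.
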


We consider particular formulas  in the language
of pre-relation algebras:  $\Eq(\bar y)$
is the conjunction of   equations
 such that $A\models \Eq(\bar \xi)$ 
implies $\beta \in \Eq(A)$ for any $\beta$ in the list
$\bar \xi$. 
A \emph{type-1-formula} $\psi(\bar y)$
is a conjunction of basic  equations   in the $\cap$-$\circ$-fragment.
Let $\tau(u,\bar y, \bar v)$ denote the obvious  type-1-formula such that
$A\models \tau(\gamma,\bar \delta,\bar \vep)$
for some $\bar \vep$ 
if and only if $\gamma =\delta_1 \circ \ldots \circ \delta_n$
and that, in this case,   $\gamma=\vep_j=\delta$ for all $j$ if
$\delta_i =\delta$ for all $i$.
The richness conditions (I) and (II) are  modified
replacing $\lt(V)$  by $\A(V)$.

\begin{theorem}\lab{11}
The consistency problem is unsolvable for any
class  $\mc{A}$ of representable pre-relation algebras
satisfying (I)
or    contained in $\Q\mc{R}_f$ 
 and satisfying (II).
More precisely, there is a recursive set $\Sigma'$
of type-1-formulas  such 
that there is no algorithm which on input $\psi(\bar y)\in \Sigma'$
would decide whether $\psi^\exists$ given as  \[
\exists \bar y \exists u \exists \bar v.\; 
\Eq(\bar y) \wedge  \bigcap_i y_i  =\Delta \wedge
\psi(\bar y) \wedge \tau(u,\bar y, \bar v) \wedge u \neq \Delta\]
is satisfied in some member of $\mc{A}$. 
\end{theorem}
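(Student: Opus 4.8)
The plan is to reduce the consistency problem for representable pre-relation algebras to that for modular lattices, using the correspondence between a permuting $4$-frame $\bar\alpha$ in such an algebra $A$ and the modular lattice $\lt(A,\alpha)$ furnished by Fact~\ref{8} and the realization result of Fact~\ref{10}. The source of unsolvability is Lemma~\ref{5}: there is a recursive set $\Sigma$ of conjunctions $\phi(\bar x,\bar z)$ of lattice equations for which validity of $\phi^\exists$ (in some, resp.\ some finite, modular lattice) is non-recursive. The task is to transport each such $\phi$ into a type-$1$-formula $\psi(\bar y)$ so that satisfiability of $\psi^\exists$ in $\mc{A}$ tracks validity of $\phi^\exists$.

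\medskip

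First I would recall that every lattice equation appearing in $\Sigma$ is built from join $+$ and meet $\cap$. In a represented pre-relation algebra, meet is already the operation $\cap$, while the lattice join of two \emph{permuting} equivalences is exactly their relational composite $\circ$; this is the content of Fact~\ref{8}, which guarantees that the sublattice $\lt(A,\alpha)$ generated by $\bar\alpha$ and $\G(A,\alpha)$ consists of pairwise permuting equivalences and that there $\alpha+\beta=\alpha\circ\beta$. So I would rewrite each lattice term $s(\bar x,\bar z)$ as a $\cap$-$\circ$-term, and each lattice equation $s=t$ as a basic equation in the $\cap$-$\circ$-fragment, obtaining from $\phi(\bar x,\bar z)$ a type-$1$-formula $\psi(\bar y)$ (with $\bar y$ comprising the variables for $\bar z$ and $\bar x$). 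To force the $\bar y$ to denote \emph{equivalence} relations that actually permute, I would prepend the conjunct $\Eq(\bar y)$ and the frame-minimality conjunct $\bigcap_i y_i=\Delta$ (playing the role of $\alpha_\bot=\Delta$); permutation of the relevant pairs is then recorded by the $\tau(u,\bar y,\bar v)$ conjunct, whose purpose is to certify via auxiliary witnesses $\bar v$ that the composites needed to interpret the lattice joins are genuine meets-of-equivalences and, crucially, to produce the single element $u$ that equals $\Delta$ precisely when the assignment is trivial --- i.e.\ $u\neq\Delta$ replaces the non-triviality condition $z_\bot\neq z_\top$ of $\phi^\exists$.

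\medskip

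With the translation fixed, the equivalence of satisfiability would be argued in two directions. Given a non-trivial satisfying assignment for $\phi$ in some (finite) modular lattice, Lemma~\ref{5}(i)/(ii) lets me realize it inside $\lt(V)$ for a suitable $V$; Fact~\ref{10} then presents $\lt(V)$ as $\lt(A(V),\alpha)$ inside a representable pre-relation algebra $A(V)$, and reading the lattice values as permuting equivalences yields an assignment satisfying $\Eq(\bar y)\wedge\bigcap_i y_i=\Delta\wedge\psi(\bar y)\wedge\tau(u,\bar y,\bar v)\wedge u\neq\Delta$ in $A(V)$ --- which lies in $\mc{A}$ by the modified richness condition (I), or via $\Q\mc{R}_f$ and (II) using Fact~\ref{tri}. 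Conversely, any satisfying assignment for $\psi^\exists$ in a member of $\mc{A}$ forces the $\bar y$ into $\Eq$ with pairwise permuting composites, so by Fact~\ref{8} they live in a modular lattice $\lt(A,\alpha)$ where $+$ is $\circ$; reading the $\cap$-$\circ$-equations back as lattice equations gives a satisfying assignment for $\phi$, and the conjunct $u\neq\Delta$ together with $\bigcap_i y_i=\Delta$ guarantees it is non-trivial. Since the map $\phi\mapsto\psi$ is clearly effective, taking $\Sigma'$ to be the image of $\Sigma$ transports the non-recursiveness from Lemma~\ref{5}(iv) and yields unsolvability.

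\medskip

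The main obstacle I anticipate lies not in the lattice-theoretic core but in arranging $\Eq$, $\tau$, and the witnessing variables $\bar v$ so that satisfiability in $\mc{A}$ forces genuine \emph{permutation} of exactly the equivalence pairs whose joins are used in $\psi$, and so that $u$ faithfully encodes non-triviality. The delicate point is that $\psi$ is required to be a pure type-$1$-formula (a conjunction of basic $\cap$-$\circ$-equations), so I cannot directly assert permutability or the frame inequalities as equations; instead these must be coerced through the positive-primitive scaffolding $\Eq(\bar y)\wedge\bigcap_i y_i=\Delta\wedge\tau(u,\bar y,\bar v)$ in the outer existential formula $\psi^\exists$. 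Verifying that this scaffolding is both necessary (any model really does give permuting equivalences, so Fact~\ref{8} applies) and sufficient (the realization in $A(V)$ actually satisfies $\tau$ with appropriate $\bar v$ and yields $u\neq\Delta$) is where the careful, but routine, checking concentrates.
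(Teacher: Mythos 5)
Your overall reduction is indeed the paper's: translate each $\phi\in\Sigma$ from Lemma~\ref{5} into a type-1-formula, pass between $\lt(V)$ and $\A(V)$ via Facts~\ref{8} and~\ref{10}, and transfer non-recursiveness from Lemma~\ref{5}(iv), with Fact~\ref{tri} bridging quasi-varieties. But there is a genuine gap at exactly the point you flag as delicate: how a pure type-1-formula $\psi$ forces the composites interpreting lattice joins to permute. You delegate this to $\tau(u,\bar y,\bar v)$ (``permutation of the relevant pairs is then recorded by the $\tau$ conjunct \dots to certify via auxiliary witnesses $\bar v$ that the composites needed to interpret the lattice joins are genuine''). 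That cannot work: $\tau$ is a single \emph{fixed} formula, independent of $\psi$, asserting only that $u$ is the chain composite $y_1\circ\cdots\circ y_n$ with the partial products $\bar v$ as witnesses; it says nothing about the pairwise composites occurring inside $\psi$. In the converse direction nothing would then force $v\circ w=w\circ v$ for the pairs being ``joined'', so the frame equations need not hold in $A^\#$, and Fact~\ref{8} --- which requires a \emph{permuting} $4$-frame --- could not be invoked. The device you are missing is the paper's: first unnest $\phi$ by Fact~\ref{p1} into a conjunction of basic equations, folding the fresh variables into $\bar y$ (so they fall under $\Eq(\bar y)$; note your $\bar y$ comprises only the variables for $\bar x,\bar z$, and the unnesting witnesses must not be conflated with $\bar v$, which the theorem reserves for $\tau$), and then replace each basic join equation $u'=v+w$ by the pair $u'=v\circ w\ \wedge\ u'=w\circ v$. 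With all values in $\Eq(A)$, this makes $v\circ w$ an equivalence and hence the join in $\Eq(S)$ (one could even keep a single order and recover the other from symmetry of $u'$, but either way permutability must be written into $\psi$ itself, join by join, not delegated to $\tau$).

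The actual and sole role of $\tau$ is the non-triviality bookkeeping, which you did get right: if all $y_i$ take a common value $\delta$ then $\tau$ forces $u=\delta$, and $\bigcap_i y_i=\Delta$ forces $\delta=\Delta$, so $u\neq\Delta$ excludes trivial assignments; conversely, in $\A(V)$ with a genuine frame, $u$ evaluates to the all-relation on $V$, hence $u \neq \Delta$ for $\dim V>0$ (Lemma~\ref{fex}(ii) underlies the collapse in the trivial case). One further small repair: in case (II) you should apply Fact~\ref{tri} also in the converse direction, so that a satisfying assignment in $A\in\mc{A}\subseteq\Q\mc{R}_f$ may be assumed to lie in a \emph{finite} representable algebra, making ${\sf L}(A,\alpha)$ a finite modular lattice as required to match the finite case of Lemma~\ref{5}(iv).
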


\begin{corollary}\lab{11c}
Unsolvability persists 
if the total relation is given as constant $\nabla$;
in this case, $ u=\nabla$ has to be added as a 
conjunct in forming   $\psi^\exists$.
Also unsolvability persists in any expansion of $\mc{A}$.
\end{corollary}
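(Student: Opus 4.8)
The plan is to reuse the reduction $\pi\mapsto\psi^\exists$ from the proof of Theorem~\ref{11} essentially unchanged, exploiting two features of that construction: its non-triviality is encoded by the internal conjunct $u\neq\Delta$, and the witness $u$ produced in the canonical model is already the total relation. Both persistence claims then follow by inspecting the two directions of that reduction.

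For arbitrary expansions $\mc{A}'$ of $\mc{A}$, I would observe that $\psi^\exists$ is a sentence in the base language $\{\cap,\circ,{}^{-1},\Delta\}$, so its truth in any $A'\in\mc{A}'$ depends only on the base reduct of $A'$. By the definition of expansion, these reducts are the members of $\mc{A}$, and every member of $\mc{A}$ carries some expansion in $\mc{A}'$; hence $\psi^\exists$ is satisfiable in some member of $\mc{A}'$ if and only if it is satisfiable in some member of $\mc{A}$. Thus $\pi\mapsto\psi^\exists$ remains a correct reduction from the (finite) group consistency problem of Theorem~\ref{p0}, and unsolvability is inherited. The crucial feature here, and the reason for building $u\neq\Delta$ into $\psi^\exists$ in the first place, is that non-triviality is named in the base language, so the usual obstruction---trivial assignments generating non-trivial subalgebras in an expansion---cannot arise.

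For the constant $\nabla$, naming the total relation is one particular expansion, interpreted on any algebra represented on a set $S$ by $\nabla=S\times S$. The key computation is that in the canonical model $A=\A(V)$ of Fact~\ref{10} the witness is $u=y_1\circ\cdots\circ y_n$, the relational product of all the frame and group equivalences; since these pairwise permute (Fact~\ref{8}) and the four frame components already join to the top, this product is $\alpha_\top$, the top of ${\sf L}(A,\alpha)=\lt(V)$. As an equivalence relation on $V$ this top is the subspace $V$, that is the full relation $V\times V=\nabla$. Consequently every model produced when passing from a non-trivial group solution to a satisfying assignment already satisfies $u=\nabla$, so adjoining this conjunct does not impede that direction. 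Conversely, if $\psi^\exists\wedge u=\nabla$ holds in some member of the $\nabla$-expansion, then forgetting both $\nabla$ and the conjunct returns a model of the original $\psi^\exists$ in $\mc{A}$, whence Theorem~\ref{11} yields a non-trivial group solution. So $\pi\mapsto(\psi^\exists\wedge u=\nabla)$ is again a correct reduction and the problem stays unsolvable.

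The step I expect to cost the most care is the identification $u=\alpha_\top=\nabla$: one must confirm that the product singled out by $\tau(u,\bar y,\bar v)$ really reaches the lattice top of ${\sf L}(A,\alpha)$---which rests on the four frame generators lying among $\bar y$ and joining to $\alpha_\top$---and that under the representation on $V$ this top collapses to $V\times V$. Once this is secured, both persistence statements reduce to routine bookkeeping about reducts and added conjuncts.
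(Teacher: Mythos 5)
Your proposal is correct and is exactly the argument the paper leaves implicit (Corollary~\ref{11c} is stated without a separate proof): expansions are harmless because $\psi^\exists$ is a sentence of the base language with non-triviality internalized as the conjunct $u \neq \Delta$, and in the canonical models $\A(V)$ the witness $u = y_1 \circ \cdots \circ y_n$ forced by $\tau$ is indeed $\alpha_\top = V \times V = \nabla$, since the values pairwise permute and the four frame components join to the top of $\lt(V)$. Hence adding $u=\nabla$ costs nothing in the forward direction and only strengthens the backward one, just as you say.
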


\begin{proof}[Proof of Theorem \ref{11}]
Recall $\Sigma$ from Lemma \ref{5};
to obtain $\Sigma'$
 replace $\phi(\bar x,\bar z)$ by an equivalent
unnested pp-formula
in lattice language (Fact \ref{p1}). In the latter, replace any equation
$u=v+w$ by $u=v \circ w\; \wedge \;u=w \circ v$.
This  associates with $\phi(\bar x,\bar z)$
 a type-1-formula $\phi'(\bar x,\bar z,\bar y)$
such that 
 in any  lattice $L$ of permuting equivalences
one has $(L,\cap,+)\models \phi(\bar \xi,\bar \alpha)$
if and only if $(L,\cap,\circ)\models \exists \bar y.\;
\phi'(\bar \xi,\bar \alpha, \bar y)$.
Observe that, if ${\phi'}^\exists$ is valid in
$A \in \mc{R}$ then $\phi^\exists$ is valid
in  ${\sf L}(A,\alpha)$ 
for some witnessing permuting $4$-frame $\alpha$ in $A$.
Conversely, if $\phi^\exists$ is valid in 
$L={\sf L}(A,\alpha)$, then  ${\phi'}^\exists$
is valid in $A\in \mc{R}$ ---
witnessed by  the values for $\bar z$ and $\bar x$
in $L \subseteq A$.
Thus, put $\Sigma'=\{\phi'\mid \phi \in \Sigma\}$.

Now, if $\phi^\exists$ is valid within some (finite) modular
lattice, then it is so within suitable $\lt(V)$
(Lemma \ref{5}(i),(ii)) whence ${\phi'}^\exists$ is valid
 in $\A(V) \in \mc{A}$ (cf. Fact~\ref{10}).
Conversely, if ${\phi'}^\exists$ is valid in $A\in \mc{A}$
(by Fact \ref{tri} we may assume
$A$ finite in case (II)), then 
$\phi^\exists$ is valid in the (finite) modular
lattice ${\sf L}(A,\alpha)$ where $\bar \alpha$ 
is   witness   for $\bar z$.
Thus, the claim follows from  Lemma \ref{5}(iii),(iv) and
the reduction $\phi \mapsto \phi'$.
\end{proof}

\section{Databases} \lab{databases}
We follow  \cite{kan} for database concepts,
though adapting notation to common use
in mathematics.
  Fix a countably
infinite  set $X_{\infty}$ of variables and use  $x,y, \ldots$
to denote elements of $X_{\infty}$.
Under the pure universal relation assumption, a \emph{database} $D$ is given
by a finite non-empty $U \subseteq X_{\infty}$
of \emph{attributes}, for each $x$ in $U$ a
 domain $\Delta[x]$ of \emph{values} of the attribute  $x$,
and a non-empty subset (\emph{relation}) $R$ of the direct product $\prod_{x \in U} \Delta[x]$.
For a tuple $t$ in $R$ and
 $X \subseteq U$ let 
$t[X]$ be the restriction of $t$ to $X$. 
 
 The atomic sentences to be considered are  
the \emph{functional dependencies} (\emph{fd}'s) $X \imp Y$ and
the \emph{embedded multivalued dependencies} (\emph{emvd}'s) $[X,Y]$
 with non-empty finite 
$X,Y \sub U_{\infty}$. 
 Validity is defined as follows:
$D \models X \imp Y$  if and only if for all $ s,t \in R$, if $s[X]=t[X]$ then
$s[Y]=t[Y]$ - provided $X$  and $Y$ are subsets of $U$, at all. 
$D \models [X,Y]$ if and only if
for every $t_1, t_2 \in R$ with $t_1[X \cap Y] = t_2[X \cap Y]$
there exists $t \in R$ with $t[X]=t_1[X]$ and $t[Y]=t_2[Y]$; 
in other words, the restriction of $R$ to $XY=X \cup Y$ is the
natural join of the restrictions of $X$ and $Y$.

 A database $D$
with attribute set $U$  is \emph{trivial} if it satisfies
all fd's and emvd's (with attributes from $U$). 
$D$ is \emph{almost trivial} if each of its 
attributes is a key, that is if $D$ satisfies all
 fd's.

\begin{fact}
A database is almost trivial if $s[x] \neq t[x]$ 
for all $s\neq t$ in $R$ and $x \in U$.
A database is trivial if and only if its attribute set $U$ or its
 relation  $R$ is a singleton
set. 
\end{fact}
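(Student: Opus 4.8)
The statement bundles two independent claims, so the plan is to unwind the definitions of fd's and emvd's and dispatch each separately. For the first claim, assume $s[x]\neq t[x]$ for all $s\neq t$ in $R$ and all $x\in U$, and verify every fd directly: given $X\imp Y$ with non-empty $X,Y\sub U$ and $s,t\in R$ with $s[X]=t[X]$, choose $x\in X$, so that $s[x]=t[x]$; the hypothesis forces $s=t$ and hence $s[Y]=t[Y]$. Thus $D$ satisfies all fd's, i.e.\ is almost trivial. This involves no real obstacle.

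For the triviality characterization I would prove both implications. The easy direction is that a singleton $R$ or singleton $U$ makes $D$ trivial: if $R=\{t_0\}$ every fd holds trivially (the only pair is $s=t=t_0$) and every emvd is witnessed by $t=t_0$; if $U=\{x\}$ the only non-empty subset of $U$ is $U$ itself, so the only fd is $\{x\}\imp\{x\}$ and the only emvd is $[\{x\},\{x\}]$, both trivially valid. The substantive direction is the converse: assuming $D$ trivial, I would argue by contradiction that $U$ or $R$ is a singleton. Suppose $|U|\geq2$ and $|R|\geq2$, and fix distinct $x\neq y$ in $U$ and distinct $s\neq t$ in $R$. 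Since $D$ satisfies all fd's, in particular $\{z\}\imp U$ holds for every attribute $z$, so $r\mapsto r[z]$ is injective on $R$ and distinct tuples differ on every attribute.

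The key step is then to invoke the emvd $[\{x\},\{y\}]$: since $X\cap Y=\emptyset$ the premise $t_1[X\cap Y]=t_2[X\cap Y]$ is vacuous, so taking $t_1=s$, $t_2=t$ yields a tuple $t'\in R$ with $t'[x]=s[x]$ and $t'[y]=t[y]$. Injectivity on $x$ gives $t'=s$, whence $s[y]=t'[y]=t[y]$, contradicting that $s$ and $t$ differ on $y$. Hence $U$ or $R$ is a singleton (both being non-empty by definition). The one delicate point, which I expect to carry the whole argument, is the choice of an emvd with \emph{disjoint} $X$ and $Y$: this is exactly what lets the emvd assemble a mixed tuple whose coordinates come from two different rows, which the fd's then collapse back onto a single row to force the contradiction. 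Everything else is routine bookkeeping with the definitions.
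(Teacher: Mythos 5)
Your proof is correct and takes essentially the same route as the paper: both arguments hinge on an emvd with disjoint sides and vacuous premise (the paper uses $[X,Y]$ for a partition $U=X\cup Y$ into non-empty parts, you use $[\{x\},\{y\}]$ for two distinct attributes) to manufacture a mixed tuple, which the fd's then collapse onto a single row. The remaining differences are cosmetic: you argue by contradiction from $|R|\geq 2$ where the paper directly derives $t_1=t=t_2$ for arbitrary $t_1,t_2$, and you spell out the easy direction and the first claim that the paper dismisses as obvious.
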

\begin{proof}
The first claim is obvious and so is 
the inverse implication in the second.
Now, assume $D$ trivial and $U=X\cup Y$ the disjoint union of non-empty
$X,Y$.
Consider 
$t_1,t_2$ in $R$. 
 In view of the  emvd $[X,Y]$
there must be $t \in R$ such that $t_1[X]=t[X]$ and
$t_2[Y]=t[Y]$. The fd's $x\imp y$
and $y \imp x$ ($x \in X, y \in Y$) imply 
$t_1[Y]=t[Y]$
and  $t_2[X]=t[X]$
whence $t_1=t=t_2$
\end{proof}

For a vector space $V$ and injective map $f:U \to \lt(V)$,
  $U\subseteq X_{\infty}$ finite,
consider the database $\D(V,f)$ where 
 $\Delta[x]$ is given  as  the set of cosets of $f(x)$
and $R= \prod_{x \in U} \Delta[x]$.
Let $\D(V)$ denote the set of all $\D(V,f)$
for given $V$.
The richness conditions (I) and (II) are modified replacing $\lt(V)$ resp. $\A(V)$
by $\D(V)$.

\begin{theorem}\lab{dt}
Consider a class 
 $\mc{D}$  of  databases which either
satisfies (I) 
or which consists of finite databases and
satisfies (II).
Then there is no algorithm which on input of a
conjunction of fd's and emvd's decides
whether there is a member of $\mc{D}$ which is not
almost trivial (resp. non-trivial).
\end{theorem}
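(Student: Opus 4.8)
The plan is to reduce from Lemma~\ref{5}, transcribing its lattice equations into functional and embedded multivalued dependencies; this parallels the treatment of relation algebras in Theorem~\ref{11} and rests on the classical dictionary between databases and lattices of permuting equivalences (cf.\ \cite{kan,data}). To a database $D$ with attribute set $U$ and relation $R$ I attach, for $X\subseteq U$, the agreement relation $\theta_X=\{(s,t)\in R^2:s[X]=t[X]\}\in\Eq(R)$; thus $\theta_X=\bigcap_{x\in X}\theta_{\{x\}}$ and $\theta_U=\Delta$, the latter because distinct tuples differ somewhere. The two hinges are: $D\models X\imp Y$ iff $\theta_X\subseteq\theta_Y$, and $D\models[X,Y]$ iff $\theta_X\circ\theta_Y=\theta_{X\cap Y}=\theta_Y\circ\theta_X$ --- so an emvd forces $\theta_X,\theta_Y$ to permute with join $\theta_{X\cap Y}$ in $\Eq(R)$.

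I would fix, by Fact~\ref{p1}, an unnested pp-form of each $\phi\in\Sigma$ and take one attribute per lattice variable, with $U$ the set of these attributes. A meet $a=b\cap c$ is encoded by the fd's $a\imp b$, $a\imp c$, $bc\imp a$, which state precisely $\theta_a=\theta_b\cap\theta_c$. A join $a=b+c$ is encoded by the fd's $b\imp a$, $c\imp a$ together with the single emvd $[\{a,b\},\{a,c\}]$: once $\theta_b,\theta_c\subseteq\theta_a$ one has $\theta_{\{a,b\}}=\theta_b$, $\theta_{\{a,c\}}=\theta_c$, so the emvd reads $\theta_b\circ\theta_c=\theta_a$ with $\theta_b,\theta_c$ permuting, i.e.\ $\theta_a=\theta_b\vee\theta_c$. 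This gives a recursive map $\phi\mapsto\Phi_\phi$ into conjunctions of fd's and emvd's.

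For the forward direction, suppose $\phi^\exists$ holds non-trivially in some $\lt(V)$ furnished by Lemma~\ref{5}(i),(ii). Sending each attribute to the subspace assigned to its variable yields $f$, and the corresponding member $\D(V,f)\in\D(V)$ has agreement relations realising this assignment in $\lt(V)$; since subspace-coset equivalences permute, every equation of $\phi$ becomes a valid dependency, so $\D(V,f)\models\Phi_\phi$. As $z_\bot\neq z_\top$ forces some assigned subspace to be nonzero, $\D(V,f)$ is not almost trivial (and, having at least two attributes and two tuples, non-trivial). The richness condition $\D(V)\subseteq\Q\mc{D}$ and Fact~\ref{tri} then place a witness in $\mc{D}$; in case (II) I choose, as permitted there, a finite field of characteristic exceeding $|L|$, so $V$ and $\D(V,f)$ stay finite.

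The converse is the crux. Given a not-almost-trivial $D\in\mc{D}$ with $D\models\Phi_\phi$, the fd's realise the required meets and the emvd's realise the required compositions as genuine permuting joins, so the named relations satisfy in $\Eq(R)$ every equation of $\phi$; the frame equations then force $\bar z$ to a $4$-frame of permuting equivalences, generating a modular sublattice on which $\pi$ holds for the coordinatising $\bar g$. Non-triviality is where one must be careful: because $U$ is the named attribute set, $\theta_U=\Delta$, so were the frame to collapse ($a_\bot=a_\top$) all named relations would coincide and equal $\theta_U=\Delta$, making every attribute a key and $D$ almost trivial; hence ``not almost trivial'' forces the frame non-degenerate and, by Lemma~\ref{5}, a non-trivial group assignment. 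For the ``non-trivial'' variant I proceed as in Corollary~\ref{11c}, adjoining the total relation as a constant $\nabla$ and the conjunct $z_\top=\nabla$, so that $|R|\geq 2$ means $\Delta\neq\nabla=a_\top$ and again the frame cannot collapse. Combining both directions with Lemma~\ref{5}(iii),(iv), a decision procedure for $\mc{D}$ would decide the consistency problem for (finite) groups of Theorem~\ref{p0}, which is impossible. The principal obstacle is exactly this converse bookkeeping --- that the emvd's enforce permutability throughout and that database non-triviality genuinely detects a non-degenerate frame --- for which the correspondence of \cite{data} supplies the essential input.
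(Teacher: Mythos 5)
Your reduction is essentially the paper's, with one structural shortcut: the paper first proves Theorem~\ref{11} for pre-relation algebras, using the auxiliary attributes $u,\bar v$ and the formula $\tau$ (so that $\theta_u$ contains every named relation and ``not almost trivial'' becomes $\eta(u)\neq\Delta$), and only then translates into dependencies; you instead transcribe the unnested pp-form of $\phi\in\Sigma$ directly into fd's and emvd's --- your meet and join encodings are exactly the dictionary of \cite[Lemma 11]{data} --- and detect non-triviality via $\theta_U=\Delta$: a collapsed frame forces all named relations to coincide, hence to equal $\theta_U=\Delta$, making $D$ almost trivial. That shortcut is sound and arguably cleaner for the ``almost trivial'' reading; note that $\theta_U=\Delta$ also gives you $\theta_{z_\bot}=\Delta$ for free (every named relation contains $\theta_{z_\bot}$ once the equations hold), which is what makes $\bar z$ a permuting $4$-frame in the sense required. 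One attribution must be corrected, though: the crux of your converse --- that the frame together with the group elements generates a \emph{modular} sublattice of $\Eq(R)$ --- is not the database dictionary of \cite{data} but Fact~\ref{8}, i.e.\ \cite[Corollary 2]{hcom}. The emvd's only force the finitely many \emph{named} pairs to permute; that the whole generated sublattice consists of pairwise permuting equivalences is a genuine theorem, without which you cannot apply the coordinatization Lemma~\ref{fex} (which needs modularity) to recover the group.

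For the ``non-trivial'' variant your argument is incomplete as written: fd's and emvd's are the entire constraint language, so you cannot ``adjoin the total relation as a constant $\nabla$'' at the database level --- the conjunct $z_\top=\nabla$ must itself be rendered as dependencies. This is not cosmetic. Take $\mc{D}$ to be the class of all (finite) databases: one whose tuples pairwise differ at every attribute, with $|R|\geq 2$, satisfies your $\Phi_\phi$ with all named relations equal to $\Delta$ (the collapsed-frame solution) yet is non-trivial (emvd's with disjoint sides fail), so without an extra conjunct the second decision problem would be trivially ``yes'' and the reduction unsound. The paper's fix is precisely the translation of $u=\nabla$ by $[u,U\setminus u]\wedge (U\setminus u \imp u)$, which forces $\theta_u=\nabla$; applied with $u=z_\top$ this completes your argument. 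Two minor slips: condition (II) supplies a division ring of characteristic \emph{not dividing} $d+1$ (matched to $|G|=d+1$ via Lemma~\ref{gp}), not one of characteristic exceeding $|L|$, and the witness in case (II) is reached by passing through Fact~\ref{tri} on the associated algebras rather than by insisting $F$ be finite; also $\D(V,f)$ as defined requires $f$ injective, so repeated subspace values among your assigned attributes should be handled (harmlessly) by identifying the corresponding attributes.
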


While consistency (as a special case of implication)
is decidable for fd's alone, the question for emvd's alone
remains open.
By   \cite[Section 2.2]{juha}
the analogue of the Theorem follows  for   models  in independence 
logic w.r.t. 
inclusion and conditional independence atoms.

\begin{proof}[Proof of Theorem \ref{dt}]
Given a database $D$,
 one has projection maps $\pi_x:R
\imp \Delta[x]$ yielding for each $t \in R$ its 
$x-$component $\pi_{x}(t)=t[x]$ in $\Delta[x]$. 
With each of these maps one has its kernel equivalence relation $\theta_{x}$.
For a set $X$ of attributes write $\theta_X=\bigcap_{x \in X} 
\theta_x$. Thus, $(s,t)\in \theta_X $ if and only if $s[X]=t[X]$.

Let $S$ denote the factor set $R/\theta_U$ 
of equivalence  classes modulo $\theta_U$ 
and, for $X \subseteq U$,  $\eta^D_X$ the equivalence relation on $S$ corresponding to 
$\theta_X$, that is for any $\theta_U$-classes one has
\[ (t/\theta_U, s/\theta_U) \in \eta^D_X \;\mbox{ iff }
(t,s) \in \theta_X. \]
Clearly, $\eta^D_X= \bigcap_{x \in X} \eta^D_x$.
Let $\A(D)$ the pre-relation algebra
represented on $S$ which is generated by the  $\eta^D_x$,  $x \in U$.
It follows  from \cite[Lemma 11]{data}
\[\eta^D_X \subseteq \eta^D_Y \;\mbox{ iff }\; D\models X \imp Y \]
\[\eta^D_Z = \eta^D_X \cap \eta^D_Y
\;\mbox{ iff }\; D \models XY \imp Z \et Z \imp XY \]
and, if $X,Y,Z$ are pairwise disjoint, then
\[\eta^D_Z = \eta^D_X \circ \eta^D_Y 
\;\mbox{ iff }\; D \models [XZ,YZ] \et X \imp Z \et Y \imp Z .\]
For a tuple $\bar y$ from $U$ write $\eta^D(\bar y)
=(\eta^D_{y_1}, \ldots , \eta^D_{y_n} )$.

Conversely, any pre-relation
algebra $A$, represented on a set $S$, together with an assignment 
$\eta$  within $\Eq(A)$ for variables from a finite  $U\subseteq X_\infty$
such that $\bigcap_{x \in U}\eta(x) =\Delta$,
 gives rise to
a database $D=\D(A,\eta))$ with $\Delta[x]$ the set of classes
of $\eta(x)$ and $R$ the image of $S$
under the map $a\mapsto (a/\eta(x)\mid x \in U)$. 
For such, $\theta_U=\Delta$ and  $(A,\eta)\cong (\A(D), \eta^D)$.
Moreover, $D$ is almost trivial if and only if
$\im \eta$ is a singleton set, namely $\{\Delta\}$.

Given a  type-1-formula $\psi(\bar y)$ let $U$
consist of all variables occurring in $\psi^\exists$
and translate $\psi(\bar y) \wedge \tau(u,\bar y,\bar v)$
into 
 a conjunction $\psi'$ of fd's and emvd's
in attributes $\bar y, u, \bar v$
such that
$D \models \psi'$
if and only if \[\A(D)\models\; 
\bigcap_i \eta^D(y_i) =\Delta \;\wedge\; \psi(\eta^D(\bar y))  \;
\wedge\; \tau(\eta^D(u), \eta^D(\bar y), \eta^D(\bar v)).
  \]

Observe that for a database $D=\D(V,f)$
 one has
 $(\A(D),\eta^D)\cong (\A(V),\hat{f})$
where $\hat{f}(x)$ is the equivalence relation associated with $f(x)$.  
Thus,  Theorem \ref{11} applies to
 $\mc{A}=\{\A(D)\mid D \in \mc{D}\}$. We rephrase its  statement: 
There is no algorithm which, on input of a type-1-formula
$\psi(\bar y)$ decides whether  there is $A$ in $\mc{A}$
and an assignment $\eta$ for $\bar y,u,\bar v$ in $\Eq(A)$
such that $A\models \psi(\eta(\bar y))\;\wedge\;\tau(u,\bar y,\bar v) $
and $\bigcap_i \eta(y_i)=\Delta$ but $\eta(u) \neq \Delta$,
the latter being equivalent to $\im \eta$ not to be singleton. 
Deciding the latter  reduces to deciding 
whether there is a non almost trivial  database $D$ with
 attributes $\bar y, u, \bar v$ satisfying 
the conjunction $\psi'$ of fd's and emvd's. 
This shows unsolvability of 
the first  decision problem  in the Theorem.
For the second problem, we have to refer to Corollary \ref{11c}
and translate $u=\nabla$ by  $[u,U\setminus u]\wedge U\setminus u \imp u$.
Indeed, if the latter holds in $D$, then, by the emvd, 
for any $t_1,t_2 \in R$ there is $t\in R$ 
such that $t[u]=t_1[u]$ and $t[U\setminus u]=t_2[U\setminus u]$
 whence, by the fd, $t_2[u]=t[u]=t_1[u]$  and
so $(t_1,t_2) \in \theta_u$.
\end{proof}

\section{Rings} \label{Rings}

We consider rings $R$ with constants $0,1$. 
Let 
$\latr(R)$  denote  the (modular)  lattice of all right ideals.
A ring $R$ 
is (von Neumann) \emph{regular} if
for any $a \in R$ there is $x \in R$ such that $axa=a$;
equivalently,  any of its principal right
ideals is generated by an idempotent.
The principal right ideals of a regular ring  form a
  sublattice $\latn(R)$ of 
$\latr(R)$. The following is well known and easy to prove.

\begin{fact} \lab{endo}
The  endomorphism ring $R={\sf End}(V)$ 
of an vector space  $V$
is regular and one has
$\latn(R) \cong \lt(V)$ via $\varphi R  \mapsto \im \phi$.
\end{fact}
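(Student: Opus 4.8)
The plan is to prove two separate assertions: first that $R = \End(V)$ is von Neumann regular, and second that the map $\varphi R \mapsto \im \varphi$ is a lattice isomorphism from $\latn(R)$ onto $\lt(V)$. For regularity, given $\varphi \in R$, I would use the fact that any linear map splits: choose a complement $W$ of $\ker\varphi$ in $V$, so that $\varphi$ restricts to an isomorphism $W \to \im\varphi$. Then pick a complement $Z$ of $\im\varphi$ in $V$ and define $\psi \in \End(V)$ by sending $\im\varphi$ back to $W$ via the inverse of this restriction and sending $Z$ to $0$. A direct check gives $\varphi\psi\varphi = \varphi$, establishing regularity and hence that the principal right ideals form the sublattice $\latn(R)$.

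Next I would set up the bijection at the level of sets. The first step is to observe that for $\varphi \in R$ the principal right ideal $\varphi R = \{\varphi\alpha : \alpha \in \End(V)\}$ satisfies $\im(\varphi\alpha) \subseteq \im\varphi$, so $\varphi R$ determines the subspace $\im\varphi$, and conversely every $\alpha$ with $\im\alpha \subseteq \im\varphi$ can be written as $\varphi\beta$ for suitable $\beta$ (factor through the splitting above). This yields the key characterization
\[
\varphi R = \{\alpha \in \End(V) : \im\alpha \subseteq \im\varphi\},
\]
from which it is immediate that $\varphi R = \varphi' R$ iff $\im\varphi = \im\varphi'$, so the map is well defined and injective. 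Surjectivity onto $\lt(V)$ follows since for any subspace $U$ one can build a $\varphi$ (e.g.\ a projection) with $\im\varphi = U$.

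Finally I would verify the map is a lattice isomorphism by checking it is order-preserving in both directions, which for lattices suffices to make it a lattice isomorphism. From the displayed characterization, $\varphi R \subseteq \varphi' R$ iff every $\alpha$ with image in $\im\varphi$ also has image in $\im\varphi'$, which holds iff $\im\varphi \subseteq \im\varphi'$; thus the bijection is an order isomorphism between the poset $\latn(R)$ ordered by inclusion of ideals and $\lt(V)$ ordered by inclusion of subspaces. Since join and meet in both lattices are determined by the order (being the subspace/ideal supremum and infimum), the bijection preserves $+$ and $\cap$.

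I expect the main obstacle to be the surjectivity/well-definedness bookkeeping for the ideal characterization, specifically verifying that $\{\alpha : \im\alpha \subseteq \im\varphi\}$ equals $\varphi R$ rather than merely containing it; this is where the splitting of $\varphi$ through a complement of its kernel is essential, and it is the one spot requiring a genuine (if routine) linear-algebra argument rather than a formal manipulation. Everything else reduces to the order-theoretic remark that an order isomorphism between lattices is automatically a lattice isomorphism.
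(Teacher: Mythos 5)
Your proof is correct, and since the paper states Fact~\ref{endo} without proof (dismissing it as ``well known and easy to prove''), your argument simply supplies the standard expected details: the quasi-inverse $\psi$ built from complements of $\ker\varphi$ and $\im\varphi$, the characterization $\varphi R=\{\alpha : \im\alpha\subseteq\im\varphi\}$, and the order-isomorphism step. The one point worth making explicit is that the order-isomorphism argument needs the joins and meets of $\latn(R)$ to be the poset sup and inf, which holds precisely because $\varphi R+\varphi' R$ and $\varphi R\cap\varphi' R$ are again principal in a regular ring --- the sublattice fact the paper asserts just before the statement, and which you correctly invoke rather than reprove.
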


\begin{fact}\lab{regring}
There are positive primitive
$\sigma(x,y,z)$ and $\mu(x,y,z)$
in the language of rings such that the following 
hold  for any idempotents $e,f,g$ in a ring $R$:
\begin{itemize}
\item[(a)] $R\models \sigma(e,f,g)$ if and only if $gR=eR+fR$.
\item[(b)] $R\models \mu(e,f,g)$ implies $gR= eR \cap fR$.
\end{itemize}  
If $R$ is regular, then in (b) holds the converse, too.
\end{fact}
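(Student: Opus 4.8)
The plan is to express the join and meet of two principal right ideals $eR$ and $fR$ (for idempotents $e,f$) in terms of solvability of systems of equations over the ring, and then observe that ``$gR$ equals this join/meet'' can itself be encoded by positive primitive formulas whose existentially quantified variables witness the required equalities of ideals. Recall that for idempotents, $eR = fR$ if and only if $e = ef$ and $f = fe$ (each idempotent generator lies in the other's ideal), and more generally $eR \subseteq fR$ if and only if $e = fe$; these membership conditions are visibly expressible as (conjunctions of) equations, and ``$\exists$ an idempotent generating the same ideal as a given element'' is positive primitive. This is the elementary fact underlying the whole reduction of modular-lattice operations to ring operations, so I would set it up carefully first.

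For part (a), I would construct $\sigma(x,y,z)$ to assert that $gR = eR + fR$. The join $eR + fR$ is simply the right ideal of all sums $er + fs$, so $gR = eR+fR$ holds exactly when (i) $g \in eR + fR$, i.e. $\ex u,v.\; g = eu + fv$, giving $gR \subseteq eR+fR$; and (ii) both $e \in gR$ and $f \in gR$, i.e. $\ex p,q.\; e = gp \et f = gq$, giving $eR + fR \subseteq gR$ (using that $g$ is idempotent so $gR$ is a right ideal containing $g$). Conjoining these existentially quantified equations yields a positive primitive $\sigma$, and the ``if and only if'' is immediate from the ideal-membership characterization above; crucially this direction needs no regularity, since for \emph{idempotents} each $hR$ already has the idempotent $h$ as a canonical generator lying in it.

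For part (b), I would take $\mu(x,y,z)$ to assert $g \in eR$, $g \in fR$, so that $gR \subseteq eR \cap fR$ always follows — this gives the one-sided implication valid in arbitrary rings. The reverse inclusion $eR \cap fR \subseteq gR$ is where regularity is indispensable and is the \textbf{main obstacle}: in a general ring the intersection of two principal right ideals need not be principal, so there is no element to ``name'' as $g$, and even when it is principal one needs to produce a generator by a bounded (quantifier-free, existentially witnessed) computation. The standard device is to use the regular-ring formula for the meet: writing $1-e$ (an idempotent) and solving $exe = e$, one represents $eR \cap fR$ via the right ideal $\{r : er = r,\ fr = r\}$ or equivalently through annihilators, and in a regular ring every such ideal is again generated by a computable idempotent. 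Concretely I would encode, inside $\mu$, the existence of witnessing idempotents and auxiliary elements $x$ satisfying $axa=a$ that produce a generator $g$ of $eR\cap fR$; the regularity hypothesis then supplies these witnesses, yielding the converse in (b). The work here is purely the routine but careful translation of the known principal-ideal intersection formula for regular rings (cf. the description of meets of principal right ideals invoked in Section~\ref{Rings}) into a single positive primitive sentence, after which both claims follow.
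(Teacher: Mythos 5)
Your part (a) is correct and essentially the paper's construction: the paper writes the memberships $e,f\in gR$ quantifier-free as $ge=e\wedge gf=f$ (equivalent, for idempotent $g$, to your $\exists p\exists q.\,e=gp\wedge f=gq$) and conjoins $\exists r\exists s.\,g=er+fs$. Part (b), however, contains a genuine gap that starts with a misreading of the statement. Item (b) asserts that satisfaction of $\mu(e,f,g)$ implies the \emph{equality} $gR=eR\cap fR$ in an arbitrary ring; only the converse direction (``the equality implies $\mu$ is satisfied'') is permitted to use regularity. Your first candidate $\mu$ --- just $g\in eR$ and $g\in fR$ --- yields only the inclusion $gR\subseteq eR\cap fR$, which you present as ``the one-sided implication valid in arbitrary rings''; that conflates one inclusion of ideals with one direction of the equivalence, and such a $\mu$ does not satisfy (b) as stated. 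The asymmetry is load-bearing: in the proof of Theorem \ref{ring} one needs $R\models\hat{\phi}$ to force genuine lattice meets in $\latr(R)$ for every $R$ in the class, and members of $\Q\mc{R}_f$ need not be regular, so an inclusion-only $\mu$ would break the reduction.

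Your patched plan --- encoding quasi-inverse witnesses inside $\mu$ --- is the right idea and is what the paper does, but you stop short of the point that makes it work. The paper's $\mu$ asserts $\exists r\exists s$ with $(f-ef)r(f-ef)=f-ef$ (so $r$ is a quasi-inverse of the single element $f-ef$), $g\bigl(f-fr(e-ef)\bigr)=f-fr(e-ef)$, and $g=\bigl(f-fr(e-ef)\bigr)s$. By \cite[Lemma 8-3.12(ii)]{fred}, the mere existence of a quasi-inverse of $f-ef$ already forces $\bigl(f-fr(e-ef)\bigr)R=eR\cap fR$ in an \emph{arbitrary} ring, and the last two equations say that $gR$ equals this principal right ideal; regularity of $R$ is used only to guarantee that the witness $r$ exists, which gives the converse. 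You instead describe the intersection formula as ``the known principal-ideal intersection formula for regular rings,'' and your derivation sketch (via the set $\{r: er=r,\ fr=r\}$, annihilators, and unspecified ``computable idempotents'') does not establish it. If that formula required global regularity of $R$, then satisfaction of $\mu$ in a non-regular ring would not imply $gR=eR\cap fR$ and the forward implication of (b) would be unproved. The missing idea, concretely, is that \emph{element-wise} regularity of $f-ef$, made an explicit conjunct of the positive primitive formula, suffices for the identity $eR\cap fR=\bigl(f-fr(e-ef)\bigr)R$ in every ring.
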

\begin{proof}
Concerning (a) observe that $gR=eR+fR$ if and only 
\[R\models ge=e \wedge gf=f \wedge \exists r\exists s.\,g=er+fs.\]
Concerning (b) observe that, according to 
\cite[Lemma 8-3.12 (ii)]{fred}, if
\[\begin{array}{ll}\exists r \exists s.& (f-ef)r(f-ef)=f-ef\;\wedge\\ 
 &g (f- f r(e-ef))= f- fr(e-ef) \;
\wedge\; g= (f- fr(e-ef))s. \end{array}\] holds in $R$ then 
$gR=eR\cap fR$ --- the first equation encodes that
$r$ is a quasi-inverse of 
$f-ef$ while the last two state that
\[gR=(f- fr(e-ef))R.\]  
 \end{proof}

In the richness conditions 
replace $\lt(V)$ by $\End(V)$. Let $\mc{R}_f$ denote the class of
all finite rings. 

\begin{theorem} \lab{ring}
The consistency problem  is unsolvable
 for any class $\mc{C}$
 of  rings 
 satisfying  (I) or contained in  $\Q\mc{R}_f$ and
satisfying  (II).
\end{theorem}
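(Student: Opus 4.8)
The plan is to reduce the consistency problem for the class $\mc{C}$ of rings to the already-established unsolvability for the corresponding class of modular lattices in Theorem~\ref{5c}. The bridge is Fact~\ref{endo}, which identifies $\lt(V)$ with the lattice $\latn(R)$ of principal right ideals of the regular endomorphism ring $R=\End(V)$, together with Fact~\ref{regring}, which provides positive primitive ring formulas $\sigma$ and $\mu$ that express lattice join and meet of principal right ideals entirely in ring language. So the core idea is: take the lattice formula $\phi$ from Lemma~\ref{5} (or rather its unnested pp-form, via Fact~\ref{p1}), and translate each lattice operation into ring operations using $\sigma$ and $\mu$, producing a conjunction of ring equations (after introducing existential witnesses as fresh variables) whose satisfiability in the rings of $\mc{C}$ matches satisfiability of $\phi$ in the lattices $\lt(V)$.

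First I would fix the richness-condition dictionary: under (I) or (II) for $\mc{C}$, the hypothesis gives $\End(V)\in\Q\mc{C}$ for suitable $V$, and since $\latn(\End(V))\cong\lt(V)$, the relevant subspace lattices land in the lattice quasivariety generated by $\{\latn(R)\mid R\in\mc{C}\}$. Next I would take a lattice formula $\phi(\bar x,\bar z)\in\Sigma$ from Lemma~\ref{5}, rewrite it as an unnested pp-formula $\exists\bar w\,.\,\bigwedge(\text{basic equations})$ using Fact~\ref{p1}, and then replace each basic equation. An equation $u=v+w$ becomes a demand that the idempotent generating $u$'s ideal equals the join, captured by $\sigma$; an equation $u=v\cap w$ is captured by $\mu$. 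Crucially, each lattice variable must be carried by an idempotent ring element, so I would add idempotency constraints $e^2=e$ for the corresponding ring variables, and use the positive-primitive nature of $\sigma,\mu$ to fold their internal existential witnesses into the outer existential prefix. The resulting conjunction $\phi^R$ of ring equations then has the property that, for a regular ring $R$, $R\models\exists(\ldots)\phi^R$ iff $\latn(R)\models\phi$.

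Then I would verify both directions of the reduction. If $\phi$ has a non-trivial satisfying assignment in some (finite) modular lattice, Lemma~\ref{5}(i),(ii) places it in some $\lt(V)$ with $V$ of the required dimension and characteristic; by Fact~\ref{endo} this is $\latn(\End(V))$, and $\End(V)$ lies in $\mc{C}$ (up to $\Q\mc{C}$, invoking Fact~\ref{tri} in case (II) to descend to a finite ring), giving a non-trivial satisfying assignment for $\phi^R$. Conversely, a satisfying assignment for $\phi^R$ in a ring $R\in\mc{C}$ yields idempotents whose generated principal right ideals satisfy $\phi$ in $\latn(R)\in\Q\mc{C}$; non-triviality is preserved because the $z_\bot\neq z_\top$ clause translates into a genuine inequality of ring-generated ideals. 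Thus unsolvability transfers from Theorem~\ref{5c} via Lemma~\ref{5}(iii),(iv).

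The main obstacle I anticipate is the matching of \emph{non-triviality} and the handling of idempotents across the translation. Fact~\ref{regring}(b) only guarantees $\mu$ captures meet with the converse direction when $R$ is regular; for a general ring $R\in\mc{C}$ one has only the forward implication, so I must be careful that the satisfying assignment in $R$ still forces the intended lattice relations in $\latn(R)$ --- this works because $\latn(R)$ is defined for regular rings, and I would either restrict attention to regular members of $\mc{C}$ or argue that the principal right ideals generated by the witnessing idempotents satisfy the join/meet relations regardless, since $\sigma$ gives joins exactly and $\mu$ gives a lower bound that combined with the explicitly asserted idempotent generators pins down the meet. The delicate bookkeeping is ensuring every lattice variable is represented by an idempotent and that the pp-witnesses from $\sigma,\mu$ do not accidentally collapse the assignment to a trivial one; I would address this by keeping the $z_\bot\neq z_\top$ inequality as the sole non-triviality witness, exactly as in Lemma~\ref{5}(iii), and checking it survives translation into a ring inequality between idempotents.
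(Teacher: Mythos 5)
Your overall reduction coincides with the paper's: take $\phi\in\Sigma$ from Lemma~\ref{5}, pass to an unnested pp-formula via Fact~\ref{p1}, carry each lattice variable by a ring variable constrained to be idempotent, replace the basic equations $u=v+w$ and $u=v\cap w$ by the positive primitive ring formulas $\sigma$ and $\mu$ of Fact~\ref{regring} (folding their witnesses into the existential prefix), use $0\neq 1$ as the sole non-triviality witness, and handle the forward direction through Lemma~\ref{5}(i),(ii), Fact~\ref{endo} and Fact~\ref{tri}. Up to the converse direction this is exactly the paper's proof of Theorem~\ref{ring}.

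The gap is in that converse direction. You propose to read the witnessing idempotents in $\latn(R)$, but for an arbitrary $R\in\mc{C}$ this object need not exist: the principal right ideals form a sublattice of $\latr(R)$ only when $R$ is regular, and nothing in conditions (I)/(II) forces members of $\mc{C}$ to be regular. Your first fallback, to ``restrict attention to regular members of $\mc{C}$'', is not available: the consistency problem quantifies over all of $\mc{C}$, and the non-trivial witness for $\hat{\phi}$ may live in a non-regular member, so discarding such members breaks the equivalence on which the reduction rests. Your second fallback is the right idea but stops short of naming the ambient structure: one must interpret the witnessing ideals $eR$ inside $\latr(R)$, the lattice of \emph{all} right ideals, which is modular and bounded for \emph{every} ring with unit (and finite when $R$ is finite, which is what case (II) needs after descending to a finite ring via Fact~\ref{tri}). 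There the one-way implications of Fact~\ref{regring} are exactly calibrated: $\sigma$ characterizes $gR=eR+fR$ in any ring, and --- contrary to your reading of $\mu$ as giving only ``a lower bound'' --- Fact~\ref{regring}(b) already yields the exact equality $gR=eR\cap fR$ in any ring; regularity is needed only for the \emph{converse} of (b), i.e.\ for definability of meets, and that direction is used only inside the regular ring $\End(V)$ in the forward step. With $\latr(R)$ in place of $\latn(R)$ your argument closes, and it then coincides with the paper's proof, which concludes via Lemma~\ref{5}(iii),(iv) as in Theorem~\ref{11}.
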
 

\begin{proof} 
Consider the language of bounded lattices and
recall $\Sigma$ from Lemma \ref{5}.
Given $\phi(\bar x) \in \Sigma$, replace it by the equivalent  
 unnested pp-formula $\exists \bar y.\,\phi'(\bar x,\bar y)$ (Fact \ref{p1});
associate with each variable $x,y$ in the latter 
 a ring variable
$\hat{x},\hat{y}$ and let $\chi$ be the conjunction
of all equations  $\hat{x}^2=\hat{x}$ and $\hat{y}^2=\hat{y}$.
Use  Fact~\ref{regring} to replace the  basic
lattice equations  by existentially quantified
conjunctions of ring equations, each  
with new rings variables.
The conjunction of these and of $\chi$ 
is equivalent to 
a positive primitive ring formula $\hat{\phi}(\bar x,\bar y)$
such that $\latr(R)\models \phi^\exists$
if  $R\models \phi^R$ where $\phi^R$ is given as
$ \exists \bar x \exists \bar y.\,\hat{\phi}(\bar x,\bar
y) \wedge 0\neq 1$.
Conversely, if   $\phi^\exists$
holds in $\lt(V)$  then, in view of Fact \ref{endo},
$\phi^R$ holds in $\End(V)$.

In view of  the richness conditions and Fact \ref{tri},
the claim follows from  Lemma \ref{5}(iii),(iv) as in the proof
of Theorem \ref{11}.
\end{proof}

Let  $\mc{N}_f$ denote the class of all finite regular rings.
By the Artin-Wedderburn Theorem  $\mc{N}_f$ 
consists, up to isomorphism, just of the direct
products of matrix rings $F^{d \times d}$,
$d<\aleph_0$, $F$ a finite  field. 

\begin{fact}\lab{lip}
$\End(V) \in \Q\mc{N}_f$
if $V$ is an $F$-vector space of $\dim V <\aleph_0$ and 
 if $F$ is finite dimensional over its center.
\end{fact}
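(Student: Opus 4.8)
The goal is to show $\End(V) \in \Q\mc{N}_f$ for a finite-dimensional $F$-vector space $V$, where $F$ is finite-dimensional over its center $Z$. Here $\mc{N}_f$ is the class of finite regular rings, and by Artin--Wedderburn these are exactly finite direct products of matrix rings over finite fields.

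The plan is to exhibit $\End(V)$ as embeddable into an ultraproduct of finite regular rings, since $\Q\mc{N}_f$ is the quasivariety generated by $\mc{N}_f$ and any member of $\mc{N}_f$ or substructure of an ultraproduct thereof lands in the quasivariety. First I would reduce to the case $V = F^n$ with $n = \dim V$, so that $\End(V) \cong F^{n \times n}$, the full matrix ring over $F$. Since $F$ is finite-dimensional over its center $Z$, say $[F:Z] = m^2$ (as a finite-dimensional division algebra over its center has square dimension), it suffices to realize $Z$, and then $F$, and then $F^{n \times n}$, via a chain of finite approximations.

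The key step is to approximate the center $Z$, an infinite field (or a field of characteristic $p$) by finite fields. If $\chi(F) = p > 0$, then $Z$ is an infinite extension or a field containing $\GF(p)$, and one expects $Z$ to embed into an ultraproduct of finite fields $\GF(p^k)$; indeed any field of characteristic $p$ that is locally finite embeds into such an ultraproduct, and more generally one uses that $Z$ is finitely generated over its prime field in the relevant presentations. If $\chi(F) = 0$, one instead uses that $\GF(p^k)$ for growing $p$ and $k$ yields, via ultraproduct, a field of characteristic $0$ into which $Z$ embeds. In either case, $F$ being a finite-dimensional division algebra over $Z$ can be presented by finitely many structure constants, and the same structure constants over the finite-field approximations $Z_k$ yield finite rings $F_k$ (finite algebras of the same dimension over $Z_k$); by Wedderburn's little theorem every finite division ring is commutative, so the $F_k$ need not be division rings, but they are finite, hence after passing to the ultraproduct one recovers a division algebra isomorphic to $F$ over the ultraproduct field. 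Passing to $n \times n$ matrices throughout, $F_k^{n \times n}$ is a finite ring, visibly regular (a finite simple Artinian ring, hence a matrix ring over a finite field after further splitting, lands in $\mc{N}_f$), and $\End(V) \cong F^{n\times n}$ embeds into the ultraproduct $\prod_{k}/\mc{U}\, F_k^{n \times n}$.

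The main obstacle I anticipate is making the approximation of $F$ over its center genuinely effective and model-theoretically clean: one must ensure that the finite rings $F_k^{n\times n}$ are actually regular and lie in $\mc{N}_f$ (this follows since any finite ring that is a matrix ring over a finite commutative local ring is regular precisely when that ring is a field, so one should arrange $Z_k$ to be finite fields rather than merely finite rings), and that the embedding of $F^{n\times n}$ into the ultraproduct is a ring embedding respecting $0$ and $1$. The cleanest route is probably to invoke the fact that $\mc{N}_f$ is closed under the relevant operations and that $\Q\mc{N}_f$ contains $\End(V)$ whenever $\End(V)$ satisfies every quasi-identity valid in all finite regular rings; one then verifies, using the structure-constant presentation and \L o\'s's theorem, that no quasi-identity separates $\End(V)$ from the class, which is where the characteristic hypothesis and the finite-dimensionality over the center are used.
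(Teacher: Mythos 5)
There is a genuine gap at the heart of your approach. You keep the division algebra structure of $F$ and approximate it by reducing structure constants over finite-field approximations $Z_k$ of the center, but the resulting finite algebras $F_k$ --- and hence $F_k^{n\times n}$ --- need \emph{not} be regular, and your parenthetical claim that $F_k^{n\times n}$ is ``visibly regular (a finite simple Artinian ring\dots)'' is false in general. Reducing the structure constants of a central division algebra modulo a maximal ideal can produce a non-semisimple ring: take the rational quaternions with basis $1,i,j,k$ and reduce mod $2$; since $ij=-ji=ji$ in characteristic $2$, one gets a \emph{commutative} local ring in which $(1+i)^2=0$, so it has a nonzero nilpotent radical, and matrix rings over it are not regular and lie outside $\mc{N}_f$. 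Since membership in $\Q\mc{N}_f$ is certified by embedding into (ultra)products of members of $\mc{N}_f$, landing in non-regular finite rings breaks the argument exactly where you wave it through. A repair is possible but nontrivial: a central simple algebra is separable over its center, so one can invert the discriminant of the trace form in the finitely generated coefficient ring to force all reductions to be semisimple; nothing of this sort appears in your proposal. The paper sidesteps the entire difficulty in one line: $\End(V_F)$ embeds into $\End(V_C)$, $C$ the center of $F$ (a finite-dimensional $C$-space), and quasivarieties are closed under subalgebras, so one may assume $F$ is a \emph{field} from the outset.

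Your treatment of the center is also gappy. The dichotomy you offer for embedding $Z$ into an ultraproduct of finite fields --- ``locally finite'' or ``finitely generated over its prime field'' --- covers neither $Z=\mathbb{R}$ nor most fields of interest, and ``one expects'' is not an argument; a correct version needs either finitely generated subrings with finite residue fields together with closure of quasivarieties under directed unions, or the paper's route, which you should compare: embed $F^{d\times d}$ into $\bar F^{d\times d}$ ($\bar F$ the algebraic closure), note $\bar F^{d\times d}$ is elementarily equivalent to $\bar P^{d\times d}$ ($\bar P$ the algebraic closure of the prime field $P$) by completeness of the theory of algebraically closed fields of fixed characteristic --- quasi-identities being first-order, elementary equivalence preserves membership in $\Q\mc{N}_f$ --- then write $\bar P^{d\times d}$ as the directed union of the $K^{d\times d}$ with $[K:P]$ finite, which are finite rings when $\chi(F)>0$, and in characteristic $0$ embed $\overline{\mathbb{Q}}$ into an ultraproduct of the $\overline{\GF(p)}$. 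Note that the paper's proof leans on two closure properties of quasivarieties (elementary equivalence via completeness of ACF, and directed unions) that your proposal never invokes but cannot do without on your chosen route.
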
 
\begin{proof}
This  can be seen as a variant of Lemma 3.5  in Lipshitz \cite{lip}.
Since $\End V_F$ embeds into $\End V_C$, $C$ the center of $F$,
we may assume that $F$ is  a field
and consider $F^{d \times d} \cong \End(V_F)$.  
By tensoring with $\bar F$, the algebraic closure of $F$,
we have $F^{d \times d} $ embedded into 
$\bar F^{d \times d}$. The  algebraic closure $\bar P$  of the prime subfield
$P$ of $F$ is elementarily  equivalent to $\bar F$,
and it follows that $\bar F^{d\times d}$ and
$\bar P^{d \times d} $ are elementarily equivalent, too.
Now, $\bar P^{d\times d}$
is the  directed union 
(whence in the quasi-variety) of 
the  $K^{d\times d}$ where $K$ is a subfield of $\bar P$ of finite degree,
 --- and  finite if $P$ is finite.
Finally, 
observe that $ \overline{\mathbb{Q}}$  embeds into 
a suitable  ultraproduct of the $\bar P$, $P$ finite,
since that is algebraically closed and of characteristic $0$.
\end{proof}

In particular, if $\mc{F}$ 
is a class of division rings which are finite dimensional over the center
and if $\mc{F}$ contains members of characteristic $0$ or
infinitely many finite characteristics,
then there is no algorithm to decide,
for a given finite family of multi-variate  polynomials $p_i$ (in non-commuting 
variables)  with integer coefficients,
whether 
there is a common zero in
the matrix ring  $F^{d \times d}$ for some $F \in \mc{F}$ and $0<d
<\aleph_0$.
 In view of Fact \ref{p1}
one may restrict to  families of quadratic polynomials.

If the matrix rings  $F^{d \times d}$ are endowed with 
an involution $A \mapsto A^*$ 
such that 
$\sum_i A_iA_i^* =0$ implies $A_i=0$ for all $i$   then a family $(p_i)$
can be replaced by the single  $\sum_i p_i p^*_i$,
 which can be considered a polynomial in variables $x_i, x^*_i$,
to be interpreted such that $x^*_j \mapsto B^*_j$ 
if $x_j \mapsto B_j$.  Again, it suffices to consider
a single quartic such polynomial. In particular this
applies if $\mc{F}$ consists of  subfields of the
 complex numbers, closed under conjugation, and if
$A^*$ is  the conjugate transpose of $A$.

In the context of the categorical approach to
Quantum Theory (cf. \cite{abr,hard}), Theorem \ref{ring}
yields the following: Let $F$ be a division ring of
characteristic $0$ and 
$C$  the additive category, possibly
enriched  with additional structure, of 
 finite dimensional $F$-vector spaces.
Consider $C$ as a partial algebraic structure
the underlying ``set'' of which  is the class of all morphisms.
Then
there is no algorithm to decide, for any given
conjunction $\pi(\bar x)$ of equations, whether 
$\pi(\bar x)$ admits an
assignment in $C$ which is satisfying
(in a particular, having all terms in $\pi(\bar x)$ evaluated)
and non-trivial (that is, not having a $0$-morphism as single  value).
Indeed, the problem of Theorem \ref{ring} can be encoded
so that satisfying assignments must have  values 
which are endomorphism of a single object.

\section{Complemented modular lattices} \lab{CMOLs}

A modular lattice $L$ with bounds $0,1$
as constants  is \emph{complemented} 
if for any $a$ there is $b$ such that $a \oplus b=1$
(in the sequel, we consider $0,1$ as constants).
Here, 
 consistency problems can be given a more special  form.

\begin{fact}\lab{lat}
 Within the class of  complemented modular lattices, any conjunction  of equations is equivalent to a formula
 $\exists \bar y.\;s(\bar x,\bar y)=0\;\wedge\; t(\bar x,\bar y)=1$
with terms $s,t$.
\end{fact}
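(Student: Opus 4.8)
The plan is to reduce an arbitrary conjunction of lattice equations to a single pair of equations ``$s=0$'' and ``$t=1$'' by exploiting two features available in complemented modular lattices: the ability to collapse several equations into one via joins and meets, and the ability to convert any equation ``$p=q$'' into the normalized pair by introducing a complement as an auxiliary existentially quantified variable. First I would observe that a conjunction $\bigwedge_i (p_i = q_i)$ of equations is, in any lattice, equivalent to the single equation expressing that all the $p_i$ and $q_i$ agree appropriately; but the cleaner route is to handle each conjunct and then amalgamate. The key mechanism is that in a bounded lattice an equation $p = q$ is equivalent to the conjunction of $p \leq q$ and $q \leq p$, i.e. to $p\cap q = p$ and $p + q = q$ in one direction; and inequalities can in turn be encoded using $0$ and $1$ once we have complements at our disposal.

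The central step is the following normalization for a single equation. Given $p(\bar x), q(\bar x)$, in a complemented modular lattice one has $p = q$ if and only if there exists a common complement phenomenon: more usefully, $p \leq q$ holds if and only if $p \cap q' = 0$ for some (equivalently, any) complement $q'$ of $q$, and dually $q \leq p$ if and only if $q \cap p' = 0$. So I would introduce existentially quantified variables $\bar y$ that are \emph{asserted} to be complements of the relevant subterms, adding the defining relations $q + y = 1$, $q \cap y = 0$ as part of the formula; these relations are themselves of the required $\{=0,=1\}$ shape. Then ``$p\leq q$'' becomes ``$p \cap y = 0$'', which is again of that shape. Having rewritten every conjunct of the original formula as a finite collection of statements each of the form ``$(\text{term}) = 0$'' or ``$(\text{term}) = 1$'', I would amalgamate: a finite conjunction $\bigwedge_j (s_j = 0)$ is equivalent to $\bigl(\sum_j s_j\bigr) = 0$, and a finite conjunction $\bigwedge_j (t_j = 1)$ is equivalent to $\bigl(\bigcap_j t_j\bigr) = 1$, since $0$ is the bottom and $1$ the top. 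This collapses everything into exactly one equation ``$s(\bar x, \bar y) = 0$'' and one equation ``$t(\bar x, \bar y) = 1$'', giving the claimed form $\exists \bar y.\; s = 0 \wedge t = 1$.

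I expect the main obstacle to be the logical equivalence step where complements are introduced as auxiliary variables: I must ensure that replacing ``$p \leq q$'' by ``$\exists y.\; (q + y = 1 \wedge q \cap y = 0 \wedge p \cap y = 0)$'' is genuinely equivalent over the class, not merely over individual lattices. The forward direction is immediate by choosing any complement of $q$, which exists by complementedness. The reverse direction needs that if $q + y = 1$, $q \cap y = 0$, and $p \cap y = 0$, then $p \leq q$; this is where modularity is essential, via the usual argument $p = p \cap 1 = p \cap (q + y)$ and then a modular-law manipulation, but one must be careful because $p$ need not lie above $q$ or below $y$, so the naive application of the modular identity does not directly apply and a small additional relation (for instance also asserting $p \leq q + $ something, or working with $p \cap (q+y)$ and bounding the ``overflow'' into $y$) may be required. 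The cleanest fix is to demand $y$ be a complement of $q$ and additionally require $p \cap y = 0$ together with $p \leq q + y = 1$ trivially, then use that $p \cap y = 0$ forces $p$ into $q$ by the standard perspectivity/modularity argument in the interval $[0,1]$; verifying this identity carefully is the one routine-but-delicate computation I would need to carry out in full.
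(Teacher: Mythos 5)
Your overall architecture (introduce existentially quantified complement witnesses so that every conjunct becomes a statement of the form $\mathrm{term}=0$ or $\mathrm{term}=1$, then amalgamate the $=0$ parts by a join and the $=1$ parts by a meet) is exactly the paper's skeleton, and the amalgamation step is fine. But the central encoding is unsound, and it is precisely the step you flagged as ``routine-but-delicate'': the implication you need --- if $q+y=1$, $q\cap y=0$ and $p\cap y=0$ then $p\le q$ --- is simply \emph{false} in complemented modular lattices, so no modularity computation can complete it. Take $M_3$ (equivalently $\lt(V)$ with $\dim V=2$) and let $p,q,y$ be three distinct atoms: then $q+y=1$, $q\cap y=0$, $p\cap y=0$, yet $p\not\le q$. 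Having trivial meet with a complement of $q$ forces an element under $q$ in \emph{distributive} lattices, not in modular ones. Your proposed ``cleanest fix'' makes this worse rather than better: asking in addition that $p$ be a full complement of $y$ says exactly that $p$ and $q$ are perspective with common axis $y$, and perspective elements need not be comparable --- perspectivity is the phenomenon that defeats the argument, not the one that saves it.

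The repair, which is the paper's actual trick, is to attach the witness to the comparable pair $s_j\cap t_j\le s_j+t_j$ rather than to one side of an inequality: $s_j=t_j$ holds if and only if there exists $v$ with $(s_j+t_j)\cap v=0$ and $(s_j\cap t_j)+v=1$. Forward: take $v$ any complement of $s_j\ (=t_j)$, which exists by complementedness. Backward: since $s_j\cap t_j\le s_j+t_j$, modularity gives $s_j+t_j=(s_j+t_j)\cap\bigl((s_j\cap t_j)+v\bigr)=(s_j\cap t_j)+\bigl((s_j+t_j)\cap v\bigr)=s_j\cap t_j$, whence $s_j=t_j$. The modular law applies here precisely because one of the two elements flanking $v$ lies below the other; in your version no comparability between $p$ and $q$ or $y$ is available, which is why your ``naive application'' cannot be pushed through. (If you insist on encoding $p\le q$ directly, the correct relations on the witness are $(p+q)\cap y=0$ and $q+y=1$; then $p+q=(p+q)\cap(q+y)=q+\bigl((p+q)\cap y\bigr)=q$.) With the per-conjunct encoding corrected this way, your join/meet amalgamation goes through verbatim and reproduces the paper's proof.
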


\begin{proof} 
 Given a  conjunction of equations $s_j=t_j$,
observe each $s_j=t_j$ equivalent 
 to 
$\exists v: \tilde s_j=\Zero \;\wedge\; \tilde t_j=\One$
for  $\tilde s_j:=(s_j+ t_j)\cap v$ and $\tilde t_j:=(s_j\cap
t_j)+ v$
(due to modularity and existence of complements);
and $\tilde s_j=\Zero \;\wedge\; \tilde t_j=\One\;\wedge\;\tilde s_i=\Zero \;\wedge\; \tilde t_i=\One $
equivalent to $\tilde s_j+\tilde s_i=\Zero\;\wedge\;\tilde
t_j\cap\tilde t_i=\One$. 
\end{proof}

In particular, the lattices $\lt(V)$ of all linear subspaces
of vector spaces are complemented modular
and so are the lattices $\latn(R)$
of principal right ideals of regular rings.
For the latter, the following is useful in case (II).

\begin{fact}\lab{qua}
For any class $\mc{R}$ of regular rings
one has $\Q\{\latn(R)\mid R \in \mc{R}\}  \subseteq 
\{\latn(R)\mid R \in \Q\mc{R}\}$
\end{fact}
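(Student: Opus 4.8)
The plan is to prove the inclusion by tracking the closure operators that generate a quasivariety. By Mal'cev's theorem, $\Q\mc{K}=\mathbf{ISPP_u}(\mc{K})$ for any class $\mc{K}$, so it suffices to show that the class $\mc{L}'=\{\latn(R)\mid R\in\Q\mc{R}\}$ of coordinatizable lattices absorbs each of the operators $\mathbf{I},\mathbf{S},\mathbf{P},\mathbf{P_u}$ applied to $\mc{L}=\{\latn(R)\mid R\in\mc{R}\}$, witnessing each lattice construction by a parallel ring construction that remains inside $\Q\mc{R}$. The backbone is that, for regular rings, $\latn$ is a \emph{positive-primitive interpretation}: the universe of $\latn(R)$ is the set of idempotents of $R$ factored by the quantifier-free definable relation $eR=fR$ iff $ef=f\wedge fe=e$, while the lattice operations are $0$, $1$, and join and meet given by the p.p. formulas $\sigma$ and $\mu$ of Fact~\ref{regring}. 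Regularity itself is the first-order sentence $\forall a\,\exists x.\,axa=a$.

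First I would dispose of $\mathbf{I},\mathbf{P},\mathbf{P_u}$. Isomorphism is immediate. For products, idempotents and principal right ideals of $\prod_i R_i$ are computed coordinatewise, so $\latn(\prod_i R_i)\cong\prod_i\latn(R_i)$; since a product of regular rings is regular and quasivarieties are closed under products, $\prod_i R_i\in\Q\mc{R}$ whenever each $R_i\in\Q\mc{R}$. For ultraproducts, the uniform p.p. interpretation together with \L o\'s's theorem gives $\latn(\prod_i R_i/\mc{U})\cong(\prod_i\latn(R_i))/\mc{U}$, while the ultraproduct of regular rings is again regular and lies in $\Q\mc{R}$. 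Composing, any member of $\mathbf{PP_u}(\mc{L})$ is isomorphic to $\latn(T)$ for a single regular $T\in\Q\mc{R}$.

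The remaining and genuinely hard step is $\mathbf{S}$: from an embedding $L\hookrightarrow\latn(T)$ with $T\in\Q\mc{R}$ regular I must produce a regular $T'\in\Q\mc{R}$ with $\latn(T')\cong L$. Here I would invoke von Neumann coordinatization in reverse: the coordinate ring $R(L,\bar a)$ attached to a complemented modular lattice carrying a spanning $4$-frame $\bar a$ is regular and satisfies $\latn(R(L,\bar a))\cong L$, and --- crucially --- its construction from $(L,\bar a)$ is again given by p.p. formulas (addition and multiplication of $\G(L,\bar a)$-type elements, exactly in the spirit of the frame calculus of Section~\ref{Frames}). Being a p.p. interpretation, $R(-,\bar a)$ carries $\Q\mc{L}$ into $\Q\mc{R}$, which would close the argument. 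The main obstacle I anticipate is precisely making this last step apply to the sublattices at hand: a general sublattice of $\latn(T)$ is neither complemented nor equipped with a spanning frame, so the reverse coordinatization must be arranged so that the sublattices actually arising inherit the frame of $\latn(T)$ (or one restricts to the coordinatizable members, which is what the consistency application supplies). Checking that the inherited frame spans $L$ and that the coordinate ring it produces genuinely sits in $\Q\mc{R}$ is where the real work lies.
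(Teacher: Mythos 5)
Your $\mathbf{I}$, $\mathbf{P}$, and $\mathbf{P_u}$ steps are sound, but the $\mathbf{S}$ step you defer is not merely ``where the real work lies'' --- it is impossible, and your own observation that a sublattice of $\latn(T)$ need not be complemented is already the reason. Every $\latn(R')$ with $R'$ regular \emph{is} complemented (the complement of $eR'$ is $(1-e)R'$), so the right-hand class $\{\latn(R')\mid R'\in\Q\mc{R}\}$ contains only complemented lattices; but the left-hand quasivariety is closed under sublattices and so contains non-complemented ones as soon as some $R\in\mc{R}$ is nonzero: the two-element chain $\{0,R\}$ is a sublattice of $\latn(R)$, its square lies in $\Q\{\latn(R)\mid R\in\mc{R}\}$, and the three-element chain embeds (bounds-preservingly) into that square, yet is isomorphic to no $\latn(R')$. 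Hence the inclusion \emph{as printed} is false, and no arrangement of reverse coordinatization can save it; ``restricting to the coordinatizable members'' changes the class being discussed. What has actually happened is that the Fact carries a misprint: both the paper's proof and the only use made of the Fact (deriving Fact~\ref{lip2} from Facts~\ref{endo} and \ref{lip}, where one needs $\latn(\End V)\in\Q\{\latn(S)\mid S\in\mc{N}_f\}\subseteq\Q\mc{M}_f$ from $\End V\in\Q\mc{N}_f$) concern the \emph{converse} inclusion $\{\latn(R)\mid R\in\Q\mc{R}\}\subseteq\Q\{\latn(R)\mid R\in\mc{R}\}$.

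For that converse your toolkit works and, suitably redirected, reproduces the paper's argument: run the $\mathbf{ISPP_u}$ decomposition on the \emph{ring} side, where $\mathbf{S}$ is harmless because $\Q\mc{R}$ is closed under subrings by definition, and then transport along $\latn$. Concretely, any $R\in\Q\mc{R}$ embeds into a product $P=\prod_i S_i$ of ultraproducts $S_i$ of members $R_{ij}$ of $\mc{R}$ (Gorbunov's form of Mal'cev's theorem, cited in the paper). A ring embedding between regular rings induces a lattice embedding $\latn(R)\to\latn(P)$, $eR\mapsto eP$: comparability of principal right ideals of idempotents is quantifier-free definable ($eR\subseteq fR$ iff $fe=e$), and joins and meets are positively primitively definable by Fact~\ref{regring}, regularity of $R$ supplying the converses, so p.p.\ witnesses push forward. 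Your product step gives $\latn(P)\cong\prod_i\latn(S_i)$, and your \L o\'s step shows each $\latn(S_i)$ satisfies every quasi-identity valid in the $\latn(R_{ij})$ (the paper phrases this as translating such quasi-identities, via Fact~\ref{regring}, into first-order ring sentences preserved by ultraproducts). Hence $\latn(R)$ embeds into a product of lattices in $\Q\{\latn(R)\mid R\in\mc{R}\}$ and so lies in that quasivariety. The moral asymmetry is exactly the one your attempt exposed: coordinatizability survives $\mathbf{S}$ applied to rings, not to lattices.
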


\begin{proof}
Since any $R \in \Q\mc{R}$ embeds into some
direct product $P$ of ultrapoducts  $S_i$ of   members $R_{ij}$ of $\mc{R}$
(cf \cite[Corolllary 2.3.4]{gorb})
it suffices to observe that $\latn(R)$ embeds into
$\latn(P)$ via $eR \mapsto eP$ and 
 $\latn(P)$  into  the direct product of the
$\latn(S_i)$ via $(e_iS_i\mid i \in I) \mapsto
(e_i\mid i \in I)P$ (cf. \cite[Corollaries  8-3.14-15]{fred}  
and that the $\latn(S_i)$
 satisfy all quasi-identities
valid in the $\latn(R_{ij})$
  (which, by Fact \ref{regring}
translate into  sentences in the language of 
rings). 
\end{proof}

The following is a lattice theoretic variant
of Fact \ref{lip} and can be proved in the 
same fashion. It follows, immediatedly, if one
combines Facts 
\ref{endo}, \ref{lip}, and \ref{qua}.
Together with Theorem \ref{5c} it implies Corollary 
\ref{lip3}. 
\begin{fact}\lab{lip2}
$\lt(V) \in \Q \mc{M}_f$
if $V$ is an $F$-vector space of $\dim V <\aleph_0$  and 
 if $F$ is finite dimensional over its center.
\end{fact}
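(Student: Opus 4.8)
The plan is to obtain Fact~\ref{lip2} as an immediate consequence of Facts~\ref{endo}, \ref{lip}, and \ref{qua}, passing from the subspace lattice of $V$ to the principal-right-ideal lattice of its endomorphism ring and then transferring the conclusion back to lattices. Write $R=\End(V)$. By Fact~\ref{endo}, $R$ is a regular ring and $\lt(V)\cong\latn(R)$ via $\varphi R\mapsto\im\varphi$; so it suffices to show $\latn(R)\in\Q\mc{M}_f$.

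First I would apply Fact~\ref{lip}, whose hypotheses coincide with those assumed here ($\dim V<\aleph_0$ and $F$ finite dimensional over its center), to conclude $R=\End(V)\in\Q\mc{N}_f$, where $\mc{N}_f$ is the class of all finite regular rings. Next I would invoke Fact~\ref{qua} with $\mc{R}=\mc{N}_f$: since $R\in\Q\mc{N}_f$, the construction in its proof embeds $\latn(R)$ into a direct product of lattices $\latn(S_i)$, where each $S_i$ is an ultraproduct of finite regular rings and hence satisfies every quasi-identity valid in those rings; via the ring-to-lattice translation of Fact~\ref{regring} this yields $\latn(R)\in\Q\{\latn(S)\mid S\in\mc{N}_f\}$. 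Finally, every $S\in\mc{N}_f$ is a finite regular ring, so $\latn(S)$ is a finite modular lattice; hence $\{\latn(S)\mid S\in\mc{N}_f\}\subseteq\mc{M}_f$ and therefore $\Q\{\latn(S)\mid S\in\mc{N}_f\}\subseteq\Q\mc{M}_f$. Combining, $\lt(V)\cong\latn(R)\in\Q\mc{M}_f$, as required.

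There is no genuine obstacle along this route once Facts~\ref{endo}, \ref{lip}, and \ref{qua} are in hand; the only point requiring care is the \emph{direction} of the containment in Fact~\ref{qua}, so I would quote the embedding actually produced in its proof (namely $\latn(R)\hookrightarrow\prod_i\latn(S_i)$ with each $\latn(S_i)\in\Q\{\latn(S)\mid S\in\mc{N}_f\}$) rather than the displayed inclusion. Alternatively, one may give a self-contained lattice-theoretic argument paralleling the proof of Fact~\ref{lip}: reduce to $F$ a field by passing to the center, realize the relevant lattice as $\latn(F^{d\times d})$, and replay the same tensoring, elementary-equivalence, and directed-union steps, now read off for the associated lattices. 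Here the main difficulty is that the field-extension and model-theoretic manipulations are phrased for rings, so the transfer to $\latn$ must be routed through the quasi-identity/ring-sentence correspondence of Fact~\ref{regring}; the delicate case, exactly as in Fact~\ref{lip}, is characteristic $0$, handled by embedding $\overline{\mb{Q}}$ into a suitable ultraproduct of the algebraic closures of the finite prime fields.
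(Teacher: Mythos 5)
Your proposal is correct and takes essentially the same route as the paper, whose proof of Fact~\ref{lip2} is precisely the remark that it follows immediately by combining Facts~\ref{endo}, \ref{lip}, and \ref{qua} (the paper also notes, as you do, the alternative of replaying the argument of Fact~\ref{lip} in lattice-theoretic form). Your caution about the direction of the inclusion in Fact~\ref{qua} is well placed: as displayed it points the wrong way for this application, but its proof establishes exactly the embedding $\latn(R)\hookrightarrow\prod_i\latn(S_i)$ with each $\latn(S_i)$ satisfying all quasi-identities of $\{\latn(S)\mid S\in\mc{N}_f\}\subseteq\mc{M}_f$, which is what you correctly invoke.
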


\begin{corollary}\lab{lip3} 
Theorem  \ref{5c} and Corollary \ref{five} 
apply to $\mc{C}=\{\lt({V}\mid V \in \mc{V}\}$ where $\mc{V}$ satisfies
the richness condition  (III).
\end{corollary}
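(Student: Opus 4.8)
The plan is to verify that $\mc{C}=\{\lt(V)\mid V\in\mc{V}\}$ meets the hypotheses of the second alternative in Theorem~\ref{5c}, namely $\mc{C}\subseteq\Q\mc{M}_f$ together with the richness condition (II); once this is in place, Corollary~\ref{five} applies automatically, since it merely sharpens the decision problems of Theorem~\ref{5c} to conjunctions in $5$ variables and adds no new requirement on $\mc{C}$. Thus the whole argument reduces to two bookkeeping verifications, the substantive content already residing in Fact~\ref{lip2} and in Theorem~\ref{5c} itself.

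First I would establish $\mc{C}\subseteq\Q\mc{M}_f$. By (III), every $V\in\mc{V}$ is finite dimensional over a division ring that is finite dimensional over its center, so Fact~\ref{lip2} yields $\lt(V)\in\Q\mc{M}_f$ for each such $V$. Hence every member of $\mc{C}$ lies in $\Q\mc{M}_f$, as required. This is exactly the rôle for which Fact~\ref{lip2} was stated, so no further work is needed here.

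Next I would check that $\mc{C}$ satisfies (II), that is (II$_4$). This is precisely the implication ``(III$_n$) implies (II$_n$)'' recorded just after the list of richness conditions, specialized to $n=4$: given $0<d<\aleph_0$, condition (III) supplies a vector space $V\in\mc{V}$ with $\dim V=4d$ over a division ring $F$ of characteristic not dividing $d+1$; then $\lt(V)\in\mc{C}\subseteq\Q\mc{C}$, which is exactly what (II) demands. The one delicate point I would be careful about is that the parameters line up term for term: (III$_4$) delivers dimension $4d$ together with the characteristic constraint on $d+1$, matching the dimension $4d$ and the constraint on $d+1$ in (II$_4$).

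With both hypotheses verified, Theorem~\ref{5c} gives unsolvability of the consistency problem for $\mc{C}$, and Corollary~\ref{five} the same for conjunctions $\pi(x_1,\ldots,x_5)$. I do not anticipate a genuine obstacle: the depth of the statement sits in Fact~\ref{lip2} (a Lipshitz-style embedding of finite-dimensional subspace lattices into $\Q\mc{M}_f$) and in the von Neumann frame encoding underlying Lemma~\ref{5} and Theorem~\ref{5c}, whereas the present corollary is a combination whose only subtlety is the matching of the parameters $d$, $4d$ and $d+1$ in passing from (III) to (II).
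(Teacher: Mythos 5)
Your proposal is correct and follows exactly the paper's route: the paper derives Corollary~\ref{lip3} by combining Fact~\ref{lip2} (giving $\lt(V)\in\Q\mc{M}_f$ for each $V\in\mc{V}$ under (III)) with the remark that (III$_n$) implies (II$_n$) for $\lt(\mc{V})$, and then invoking Theorem~\ref{5c}. Your two bookkeeping verifications, including the parameter matching of $4d$ and $d+1$, are precisely what the paper leaves implicit.
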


Of course, if for a class $\mc{C}$
of complemented modular lattices  choice of some complement is added  as fundamental
operation, Theorem \ref{5c} applies.
If $d=\dim V <\aleph_0$, if
$F$ is a division ring with involution,
and if  $V$ is endowed with an anisotropic form $\Phi$
hermitean with respect to this involution, then 
$U \mapsto U^\perp=\{v \in V\mid \forall u \in U. \Phi(v,u)=0\}$
turns $\lt(V)$ into the  ortholattice $\lt^\perp(V)$.
Here, 
 an \emph{ortholattice} is a bounded lattice
 endowed with 
a  dual automorphism  $x \mapsto x^\perp$ 
of order $2$  such that $x\oplus x^\perp=1$.

In order to have Corollary \ref{lip3} available,
we consider a class $\mc{V}$ of such spaces
where the class of underlying vector spaces
satisfies condition (III). 
Then, by Corollary \ref{five} and Fact~\ref{lat2}, below,  we obtain the following.

\begin{corollary}\lab{hilb}
There is no algorithm which, given a
$5$-variable  term $t(\bar x)$ 
in the language of ortholattices, decides
whether $\exists \bar x.\;t(\bar x)=1$
is valid in the ortholattice $\lt^\perp(V)$ 
for some $V \in \mc{V}$ with $\dim V>0$.
\end{corollary}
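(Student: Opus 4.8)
The plan is to reduce Corollary~\ref{hilb} to the consistency problem for the underlying complemented modular lattices, which is handled by Corollary~\ref{five}. The point is that the ortholattices $\lt^\perp(V)$ are expansions of the modular lattices $\lt(V)$ by the orthocomplementation $x \mapsto x^\perp$, so by the general inheritance remark in Section~\ref{AlgebraicStructures}, unsolvability should pass to this expansion. First I would invoke Corollary~\ref{lip3}: since the class $\mc{V}$ of spaces carrying the form $\Phi$ has underlying vector spaces satisfying (III), Corollary~\ref{five} applies to $\mc{C} = \lt(\mc{V})$. This gives a recursive family of $5$-variable conjunctions of bounded lattice equations whose consistency (existence of a non-trivial satisfying assignment in some $\lt(V)$, $V \in \mc{V}$) is undecidable.

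The next step is to convert each such conjunction into a single equation of the form $t(\bar x)=1$ in the ortholattice language. Here I would use Fact~\ref{lat}: within complemented modular lattices any conjunction of equations is equivalent to $\exists \bar y.\; s(\bar x,\bar y)=0 \wedge t(\bar x,\bar y)=1$. The orthocomplementation lets me fold the two conditions into one, since $s=0$ is equivalent to $s^\perp = 1$, and two conditions $u=1 \wedge w=1$ combine to $u \cap w = 1$. Thus $s=0 \wedge t=1$ becomes $s^\perp \cap t = 1$, giving a single term equal to $1$. The referenced Fact~\ref{lat2} (stated later) presumably packages exactly this maneuver in the ortholattice setting, controlling the variable count; I would cite it for the clean statement, turning the $5$-variable lattice conjunction into a $5$-variable ortholattice term $t(\bar x)$ with $\exists\bar x.\; t(\bar x)=1$ equivalent to the original consistency question.

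The main thing to verify is that the \emph{non-triviality} requirement is correctly captured by satisfiability of $t(\bar x)=1$ together with $\dim V>0$, rather than by the weaker ``generates a singleton'' notion. In the bounded case, a trivial assignment forces $0=1$, i.e. the degenerate lattice; requiring $\dim V > 0$ excludes precisely this, so a satisfying assignment in a genuine $\lt^\perp(V)$ is automatically non-trivial. I would spell out that the existential over $\bar y$ in Fact~\ref{lat} is absorbed into the quantifier $\exists \bar x$ after renaming, keeping the advertised bound of five variables — this is where the bookkeeping from Corollary~\ref{five} and the structure of Fact~\ref{lat2} must align. The expected obstacle is purely this variable-counting and quantifier-absorption: ensuring the orthocomplement trick does not introduce auxiliary variables beyond the five already secured, and confirming that passing to the expansion preserves undecidability because trivial lattice assignments force the degenerate algebra. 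Given Fact~\ref{lat2}, these are routine, and the corollary follows by composing the reductions.
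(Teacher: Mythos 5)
Your overall route coincides with the paper's: Corollary~\ref{lip3} makes Corollary~\ref{five} available for $\lt(\mc{V})$ when the underlying spaces satisfy (III), and a conversion of the resulting $5$-variable conjunctions into single ortholattice equations $t(\bar x)=1$ finishes the reduction; your handling of non-triviality ($\dim V>0$ excluding exactly the degenerate $0=1$ case, via the bounded/constants version of Lemma~\ref{5}) also matches what the paper leaves implicit. However, the mechanism you describe for the conversion contains a genuine flaw. You route through Fact~\ref{lat}, which introduces genuinely new existentially quantified variables $\bar y$ (one fresh $v$ per equation conjunct), and you then claim the existential over $\bar y$ can be ``absorbed into $\exists\bar x$ after renaming, keeping the advertised bound of five variables.'' That is false as stated: renaming cannot eliminate variables, and a term $t(\bar x,\bar y)$ in $5+m$ variables is not a $5$-variable term. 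If the reduction actually passed through Fact~\ref{lat}, the $5$-variable bound in the corollary --- which is the whole point of citing Corollary~\ref{five} rather than Theorem~\ref{5c} --- would be lost.

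The repair, and the paper's actual argument, is that Fact~\ref{lat2} is \emph{not} a packaging of Fact~\ref{lat} plus orthocomplementation; it bypasses Fact~\ref{lat} entirely. Its proof works conjunct-by-conjunct on inequalities using the modular-ortholattice equivalence
\[ y \leq x \quad\Longleftrightarrow\quad x + x^\perp y^\perp = 1, \]
so each $s_j = t_j$ becomes $u_j=1 \wedge v_j=1$ with $u_j,v_j$ terms in the \emph{original} variables only, and the full conjunction becomes the single equation $\bigcap_j u_j \cap v_j = 1$ --- no auxiliary variables are ever introduced. With that reading, your composition (Corollary~\ref{five} via Corollary~\ref{lip3}, then Fact~\ref{lat2}) is exactly the paper's proof; simply drop Fact~\ref{lat} and the absorption claim, which are both unnecessary and incompatible with the variable count.
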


Natural examples for $\mc{V}$ 
are the classes of all  finite dimensional real, complex,
and quaternionian, respectively, Hilbert spaces.
Here, in contrast,
deciding whether $\exists \bar x\;t(\bar x)>0$
holds in some $\lt^\perp(V)$ (``weak satisfiability'')
is decidable (cf. \cite{hard2}) and an upper complexity  bound  has been derived
in \cite{jacm}.

\begin{fact}\lab{lat2}
Within the class of   modular ortholattices, any conjunction
$\pi(\bar x)$ of equations 
 is equivalent to an equation
 $t(\bar x)=1$.
\end{fact}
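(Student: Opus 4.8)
The plan is to perform two reductions. First I would reduce an arbitrary single equation to the special form $u(\bar x)=\One$, and then collapse a finite conjunction of such special equations into one by means of the meet: if $a=\One$ and $b=\One$ then $a\cap b=\One$, while conversely $a\cap b=\One$ forces $a=b=\One$ since $a,b\geq a\cap b$. Thus the whole statement reduces to the single-equation case, and there the only real content is to express an arbitrary equation $s=t$ by a term evaluating to the top. The observation driving everything is that a modular ortholattice is automatically \emph{orthomodular}, which is what lets an ortho-implication term detect the order relation.

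The crucial lemma I would isolate is that, for all $s,t$ in a modular ortholattice, $s\leq t$ if and only if $s^\perp+(s\cap t)=\One$. For the forward direction, $s\leq t$ gives $s\cap t=s$, hence $s^\perp+(s\cap t)=s^\perp+s=\One$. For the converse I would meet both sides of $s^\perp+(s\cap t)=\One$ with $s$ and apply the modular law (legitimate since $s\cap t\leq s$): $s=s\cap\bigl(s^\perp+(s\cap t)\bigr)=(s\cap s^\perp)+(s\cap t)=\Zero+(s\cap t)=s\cap t$, so $s\leq t$. Writing $s\imp t$ for the term $s^\perp+(s\cap t)$, this shows that the single term equation $s\imp t=\One$ captures $s\leq t$.

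With this in hand the assembly is immediate. An equation $s=t$ is equivalent to the pair $s\leq t$ and $t\leq s$, hence to $(s\imp t)\cap(t\imp s)=\One$; set $\eqq(s,t):=(s\imp t)\cap(t\imp s)$. A conjunction $\pi(\bar x)$ of equations $s_j=t_j$ is then equivalent to $\bigwedge_j\bigl(\eqq(s_j,t_j)=\One\bigr)$, which by the meet trick of the first paragraph is equivalent to the single equation $t(\bar x)=\One$ with $t(\bar x):=\bigcap_j\eqq(s_j,t_j)$. I expect the main obstacle to be nothing beyond the converse direction of the crucial lemma, which is exactly where modularity (equivalently, orthomodularity) is indispensable: in a general ortholattice $s^\perp+(s\cap t)=\One$ need not force $s\leq t$, so the argument genuinely uses the hypothesis. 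This route supersedes the detour through Fact~\ref{lat}, whose auxiliary existential quantifiers and separate $=\Zero$ equation are avoided here, the latter being in any case redundant since $u=\Zero$ is equivalent to $u^\perp=\One$.
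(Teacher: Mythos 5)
Your proof is correct and follows essentially the same route as the paper: encode an inequality by a Sasaki-hook-type term equaling $\One$, with modularity used exactly once (via $s\cap s^\perp=\Zero$) for the converse direction, then combine the resulting equations by meets. The only cosmetic difference is the term itself --- the paper detects $y\leq x$ via $x+x^\perp\cap y^\perp=\One$, the de Morgan dual of your $s^\perp+(s\cap t)=\One$ for $s\leq t$ --- so the two arguments coincide in substance.
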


\begin{proof} 
Observe that the following are equivalent for any given $x,y$:
$x+x^\perp y^\perp=\One$; $x^\perp=x^\perp(x+x^\perp y^\perp)$;
 $x^\perp=xx^\perp+x^\perp y^\perp$ (by modularity);
 $x^\perp \leq y^\perp$; $y \leq x$.
Thus, $s_j \leq t_j$ is equivalent to some
$u_j=1$ and $t_j \leq s_j$ to some $v_j=1$;
 and  $\bigwedge_j s_j=t_j$   to $\bigcap_j u_j\cap v_j =1$.   
\end{proof}

\section{Grassmann-Cayley algebra} \label{GrassmanCayley}

Recall, that for a finite dimensional vector space $V$
 the Grassmann-Cayley algebra $\CG(V)$ (cf \cite{sturalgo})
  has, in particular, operations
$\wedge$ and $\vee$  and terms built from them and $0, 1$:
the  \emph{simple expressions}.
These operations are related to the lattice $\lt(V)$ as follows: 
$0,1$ are the bounds of $\lt(V)$,   $A\wedge B=A\cap B$
if $A+B=V$  and $A\vee B=A+B$ if $A \cap B=0$.

\begin{theorem}\lab{grass}
Let $\mc{V}$ be a class of vector spaces which
satisfies (III$_{16})$.
There is no algorithm to decide for any 
given  conjunction of 
equations $t_i(\bar x)=s_i(\bar x)$,
with simple expressions $t_i,s_i$,
whether it admits a satisfying assignment  
within $\CG(V)$ for some $V \in \mc{V}$, $V \neq 0$.
\end{theorem}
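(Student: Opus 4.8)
The plan is to reduce from the already-established unsolvability of the consistency problem for modular lattices (Theorem~\ref{5c}, via Corollary~\ref{lip3}) by showing that equations between simple expressions in $\CG(V)$ are expressive enough to encode lattice equations in $\lt(V)$. The key point is the relationship recalled just before the statement: for subspaces $A,B$ of $V$ one has $A\wedge B = A\cap B$ \emph{provided} $A+B=V$, and $A\vee B = A+B$ \emph{provided} $A\cap B=0$. Thus the Grassmann-Cayley meet and join coincide with the lattice meet and join only under complementary-dimension side conditions. First I would invoke Corollary~\ref{lip3}: since $\mc{V}$ satisfies (III$_{16}$), in particular (III$_4$), the consistency problem for $\mc{C}=\{\lt(V)\mid V\in\mc{V}\}$ is unsolvable, and moreover by Corollary~\ref{five} one may restrict to a fixed small number of variables. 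The satisfying assignments produced by Lemma~\ref{5} live in $\lt(V)$ with $\dim V = 4d$, which is why the richness condition is stated with the factor $16$: working inside a $4$-frame of $\lt(V)$ where $\dim V=16d'$ gives each frame component dimension $4d'$, leaving room to realize the auxiliary subspaces of complementary dimension needed to legitimize each Grassmann-Cayley operation.

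The central task is to translate each basic lattice equation occurring in $\phi'$ (the unnested pp-form from Fact~\ref{p1}) into equations between simple expressions. The idea is that whenever the lattice join $A+B$ or meet $A\cap B$ is needed, I would first arrange, using the frame structure, auxiliary elements witnessing that the relevant subspaces are in complementary position, so that $\vee$ and $\wedge$ genuinely compute $+$ and $\cap$. Concretely, to express $C = A+B$ as a simple expression one wants $A\cap B=0$; if that fails one replaces $B$ by a suitable frame-shifted copy in a direct summand and recombines. Dually, to express $C=A\cap B$ via $\wedge$ one needs $A+B=V$, again achievable by embedding into a large enough ambient dimension and moving to complements. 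The bookkeeping here — ensuring every invocation of $\wedge$ or $\vee$ lands in its domain of validity while preserving the truth value of the original lattice equation — is the main obstacle. The extra dimensional slack from (III$_{16}$) versus (III$_4$) is precisely what buys enough independent subspaces to keep all the side conditions simultaneously satisfiable.

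Once the translation $\phi \mapsto \phi^{GC}$ is set up, the equivalence runs in both directions. If $\phi^\exists$ holds in some $\lt(V)$, then choosing the auxiliary witnesses makes the corresponding conjunction of simple-expression equations hold in $\CG(V)$ for a possibly larger $V\in\mc{V}$ (using (III$_{16}$) to supply the space). Conversely, any satisfying assignment in $\CG(V)$ restricts, via the lattice reduct of $\CG(V)$ on $\lt(V)$, to a satisfying assignment for $\phi^\exists$ in $\lt(V)$, because the side conditions force $\wedge,\vee$ to agree with $\cap,+$ on the relevant arguments. Non-triviality transfers since the witness $z_\bot\neq z_\top$ corresponds to $0\neq 1$ in $\CG(V)$, i.e. $V\neq 0$. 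Combining this reduction with the non-recursiveness from Lemma~\ref{5}(iv) yields unsolvability, completing the proof.

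Actually, I would streamline by noting that the whole frame-based encoding of Lemma~\ref{fex} is phrased using only joins, meets, and the bounds $0,1$; so rather than translating an arbitrary lattice formula, the cleaner route is to rewrite the \emph{specific} terms $t(x,y,\bar z)$ and the frame-defining equations directly into simple expressions, exploiting that within a $4$-frame every meet and join that appears is already between elements in general position (complementary or independent), so that $\wedge$ and $\vee$ apply without modification. The hard part remains verifying that each individual $\cap$ and $+$ in the frame terms satisfies the requisite dimension condition inside $\lt(V)$ for $\dim V=16d$; granting that, the reduction and hence unsolvability follow as above.
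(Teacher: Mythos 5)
Your plan founders at its central step, and the failure is structural, not bookkeeping. You propose to reduce from the lattice consistency problem, i.e.\ to translate the formulas $\pi^\#(\bar x,\bar z)$ of Lemma~\ref{5} into simple expressions; but these formulas contain the fixed-point-freeness conjunct $z_{12}\cap \bigcap_i x_i=z_\bot$, and every argument of that meet lies below $a_1+a_2$, a \emph{proper} subspace of $V$. Since the Grassmann-Cayley meet computes $A\wedge B=A\cap B$ only when $A+B=V$ (and collapses otherwise), no dimensional slack can help: the side condition for $\wedge$ requires the arguments jointly to span $V$, which is impossible for subspaces confined to a common proper subspace. The natural repair — padding, say replacing $a_{12}$ and $g_i$ by $a_{12}+a_3+a_4$ and $g_i+a_3+a_4$ — fails exactly in the degenerate case $g_i=a_{12}$ that the conjunct must detect: there the padded arguments again do not span $V$, the GC meet returns $0$, and satisfiability in $\CG(V)$ no longer tracks the lattice condition. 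Your streamlined variant rests on the claim that within a $4$-frame every occurring meet and join is already in general position; this is false even for the frame axioms themselves: in $a_{ik}=(a_i+a_k)\cap(a_{ij}+a_{jk})$ the arguments sum only to $a_i+a_j+a_k\neq 1$. That is precisely why the paper's Fact~\ref{ad} does genuine work, rewriting this axiom as $b\cap(a_{ij}+a_{jk})=a_{ik}$ with $b=\sum_{\ell\neq j}a_\ell$ and constructing specially modified admissible terms $\otimes$ and $\ominus$ for the coordinate ring operations.

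The paper avoids the obstruction by changing the base problem: it reduces not from lattice consistency but from \emph{ring} consistency (Theorem~\ref{ring}) for the class $\{\End(V_1)\mid V_1^4\cong V\in\mc{V}\}$, encoding ring equations $p_i(\bar x)=0$ via Fact~\ref{ad}(iii) as admissible lattice equations $t_i(\bar x,\bar z)=z_1$ over $R_{12}(\lt(V),\bar a)\cong\End(a_1)$. For rings with unit, triviality is just $0=1$, so nontriviality is carried entirely by $V\neq 0$ and no analogue of the fixed-point-free conjunct is needed — this is exactly what makes an all-admissible encoding possible. It also corrects your reading of the exponent: (III$_{16}$) yields $\dim V=16d$, hence $\dim a_1=4d$, so the class of coordinate rings $\End(a_1)$ satisfies condition (II) for rings (and lies in $\Q\mc{R}_f$ by Fact~\ref{lip}); the $16$ is $4\times 4$ — one factor from passing to the frame component $a_1$, one already inside condition (II) — not slack for auxiliary complementing subspaces. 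Your two directions of the equivalence would go through once the translation exists, but without replacing the base problem the translation itself cannot be completed.
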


The proof needs some preparation.
We consider lattices with bound $0,1$.
For a  term $t(\bar x)$, call the assignment $\bar x \mapsto \bar a$
in $L$ \emph{admissible}  if, for any
occurrence of  subterms $s(\bar x)$,
$s_1(\bar x)$, and $s_2(\bar x)$ in $t(\bar x)$, the following hold
in $L$:
\begin{quote}
If $s(\bar x)= s_1(\bar x)+ s_2(\bar x)$ then 
$s_1(\bar a) \cap  s_2(\bar a)=0$;\\
If $s(\bar x)= s_1(\bar x)\cap  s_2(\bar x)$ 
then $s_1(\bar a) + s_2(\bar a)=1$.
\end{quote}
We say that $\bar a$ is \emph{admissible} for a 
conjunction $\pi(\bar x)$ of  equations if it is so for 
any subterm occurrence in $\pi(\bar x)$. If, in addition, $L\models \pi(\bar a)$ 
then we write $L\models_a \pi(\bar a)$.
For an $n$-frame $\bar a$  of  a modular lattice $L$ and
$i\neq j$ 
put \[R_{ij}(L,\bar a)=\{ x \in L\mid  x\oplus a_j=a_i+a_j\}.\]

\begin{fact}\lab{ad} Fix $n\geq 3$.
\begin{enumerate}
\item There is a conjunction $\phi(\bar z)$ of  lattice
equations such that $\bar a$ is a $4$-frame of the
modular lattice $L$ if and only if $L\models_a \phi_n(\bar a)$.
\item There are lattice terms $\otimes(x,y,\bar z)$ and $\ominus(x,y,\bar z)$
such that for any vector space  $V$ and
$4$-frame $\bar a$ of $\lt(V)$
 there is a (unique) isomorphism $\vep_{\bar a}:a_1 \to a_2$
such that $\Gamma_{\bar a}(f):=\{v- \vep_{\bar a}(f(v))\mid v \in a_1\}$
defines a ring isomorphism of $\End(a_1)$ onto
$R_{12}(L,\bar a)$
 with multiplication $(r,s) \mapsto \otimes(r,s,\bar a)$,
difference $(r,s) \mapsto \ominus(r,s,\bar a)$, zero $a_1$, and unit $a_{12}$.
Moreover, the assignments $x,y,\bar z \mapsto r,s,\bar a$
with $r,s \in R_{12}$ are admissible for the terms 
$\otimes(x,y,\bar z)$ and $\ominus(x,y,\bar z)$.
\item
With any conjunction $\pi(\bar x)$ of ring  equations
of the form $p_i(\bar x)=0$ one
can effectively associate a conjunction 
$\pi^\#(\bar x, \bar z)$ of lattice equations
of the form $t_i(\bar x, \bar z)=z_1$  such
that for any vector space $V$
  and $\bar r,\bar a$ in $L=\lt(V)$
one has $\lt(V)\models_a \pi^\#(\bar r,\bar a)$ 
if and only if $\bar a$ is an $4$-frame of $L$,
 $\bar r$ in $R_{12}(L,\bar a)$, and  $R_{12}(L,\bar a)\models \pi(\bar r)$.
 \end{enumerate}
\end{fact}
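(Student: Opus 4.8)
The plan is to build on the coordinatization already carried out in Lemma~\ref{fex}(iii), upgrading the group $\G(L,\bar a)\cong\GL(a_1)$ to the full coordinate ring $R_{12}(L,\bar a)\cong\End(a_1)$, while keeping every join and meet that occurs \emph{admissible}. Throughout I would identify $a_1,a_2,a_3$ with a fixed space $W$ via $\vep_{\bar a}=\hat a_{12}$ and the maps $\hat a_{1j}$ of Lemma~\ref{fex}(iii), so that $\lt(V)$-elements inside $a_1+a_2+a_3$ become subspaces of $W^3$ and an element of $R_{ij}$ is exactly the graph of a linear map; this reduces all admissibility checks to direct coordinate computations.

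For (i) I would write out the frame axioms and observe that each join $a_i+a_j$, $a_i+a_{ij}$ that appears is direct and each required meet is full once $\bar a$ is a frame; the only non-admissible clause is the projectivity law $a_{ik}=(a_i+a_k)\cap(a_{ij}+a_{jk})$, which I would replace by an equivalent formulation through the perspectivities $\pi^i_k(x)=(x+a_{ik})\cap a_i'$ of the proof of Lemma~\ref{fex}(iii). These are admissible, since $a_{ik}\oplus a_i'=1$ forces $(x+a_{ik})+a_i'=1$. The resulting conjunction $\phi_n(\bar z)$ then satisfies: if $\bar a$ is an $n$-frame then $L\models_a\phi_n(\bar a)$, while conversely an admissible satisfying assignment already carries, through the directness forced by admissibility together with the asserted equations, all the frame relations.

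The heart is (ii). Multiplication is graph composition through a third axis: transfer $s\in R_{12}$ to $s_{23}\in R_{23}$ by an admissible perspectivity and set $\otimes(r,s,\bar a)=\big[(r+s_{23})\cap(a_1+a_3)\big]$ transferred back to $R_{12}$; this is admissible because $r\cap s_{23}\subseteq r\cap a_2=0$ and $(r+s_{23})+(a_1+a_3)\supseteq s_{23}+a_1+a_3=a_1+a_2+a_3$. The genuinely new point is an admissible \emph{subtraction}: von~Neumann's classical addition joins two graphs over the common domain $a_1$, and that join fails to be direct once the endomorphisms degenerate. I would avoid this by lifting first: transfer $s$ to $s'\in R_{13}$, form the graph $P=(r+a_3)\cap(s'+a_2)$ of $(\alpha_1,\alpha_2)\colon a_1\to a_2\oplus a_3$, and then project through the diagonal $a_{23}$ by $\ominus(r,s,\bar a)=(P+a_{23})\cap(a_1+a_2)$, which a coordinate check identifies with the graph of $\alpha_1-\alpha_2$ (the sign being fixed by the choice of transfer). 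Every step is a direct join or a full meet --- e.g.\ $P\cap a_{23}=0$ and $(P+a_{23})+(a_1+a_2)=a_1+a_2+a_3$ --- where in each meet the two arguments are additionally padded by the fourth axis $a_4$ so as to sum to the top $1=\sum_i a_i$, which does not alter the subspace computed. With $\Gamma_{\bar a}$ the bijection $f\mapsto\{v-\vep_{\bar a}(f(v))\}$ from $\End(a_1)$ onto $R_{12}$ (extending the group isomorphism of Lemma~\ref{fex}(iii)(b)), the coordinate computations show that $\otimes$ and $\ominus$ realize composition and difference, so the ring axioms transfer from $\End(a_1)$, with zero $a_1=\Gamma_{\bar a}(0)$ and unit $a_{12}=\Gamma_{\bar a}(\id)$.

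Finally (iii) is obtained by substituting the admissible terms of (ii) into the polynomials $p_i$ (using that the zero of $R_{12}$ is $a_1=z_1$ and that $\ominus$ together with zero generates all ring operations), turning each $p_i(\bar x)=0$ into a lattice equation $t_i(\bar x,\bar z)=z_1$, and conjoining the frame conjunction $\phi_n$ of (i) together with the clauses forcing $x_i\in R_{12}$. An admissible satisfying assignment then forces, by (i), that $\bar a$ is a $4$-frame and, through the admissibility of the transfers built into the $t_i$, that each $\bar r$ lies in $R_{12}$; given this, (ii) guarantees that $t_i(\bar r,\bar a)$ evaluates to $p_i(\bar r)$ computed in $R_{12}$, so $L\models_a\pi^\#(\bar r,\bar a)$ becomes equivalent to $R_{12}(L,\bar a)\models\pi(\bar r)$. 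The main obstacle I anticipate is precisely the admissible subtraction term of (ii); the secondary effort is the bookkeeping that makes $\models_a$ in (i) and (iii) capture framehood and $R_{12}$-membership exactly.
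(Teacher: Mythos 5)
Your overall route coincides with the paper's: for (i) the paper likewise replaces the projectivity axiom by the perspectivity form $b\cap(a_{ij}+a_{jk})=a_{ik}$ with $b=\sum_{\ell\neq j}a_\ell$ (noting $a_{ij}\cap a_{jk}=0$ and $b+a_{ij}+a_{jk}=1$), and (iii) is in both treatments immediate by substitution of the admissible terms. The genuine gap sits exactly at the point you yourself single out as the heart: your candidate for admissible subtraction computes the \emph{sum}, not the difference. Work inside $a_1+a_2+a_3\cong W^3$, identifying $a_1,a_2,a_3$ with $W$ via $\vep_{\bar a}=\hat{a}_{12}$ and the $\hat{a}_{1j}$, so that $r=\Gamma_{\bar a}(f)=\{(v,-fv,0)\mid v\in W\}$, $a_{23}=\{(0,u,-u)\mid u\in W\}$, and the canonical transfer gives $s_{13}=\{(v,0,-gv)\mid v\in W\}$. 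Then your $P=(r+a_3)\cap(s_{13}+a_2)=\{(v,-fv,-gv)\mid v\in W\}$, and
\[ (P+a_{23})\cap(a_1+a_2)=\{(v,-fv+u,-gv-u)\mid v,u\in W\}\cap(a_1+a_2)=\{(v,-(f+g)v,0)\mid v\in W\}=\Gamma_{\bar a}(f+g), \]
so you have constructed an (indeed admissible) \emph{addition}. Your escape clause --- ``the sign being fixed by the choice of transfer'' --- is not available: the coherence identities $\pi^{kj}_{hj}\pi^{ij}_{kj}=\pi^{ij}_{hj}$ and $\pi^{jk}_{hk}\pi^{ij}_{kj}=\pi^{ih}_{kh}\pi^{ji}_{hi}$ established in the proof of Lemma~\ref{fex}(iii) show that every path of canonical perspectivities from $R_{12}$ to $R_{13}$ yields the same element $s_{13}$, hence the same sign; and since the statement concerns all of $\End(a_1)$, not $\GL(a_1)$, you cannot pass to inverse graphs either. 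Producing the anti-diagonal $\{(v,0,gv)\mid v\in W\}$ would already require the very subtraction you are trying to define.

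This matters for the statement as given: (ii) explicitly demands a difference term, and the normal form $t_i(\bar x,\bar z)=z_1$ in (iii) encodes $p_i(\bar r)=0$; with only admissible sum and product, a polynomial with negative integer coefficients can at best be rendered as an equation $t=t'$ between two terms, not in the required one-sided form. The paper's fix changes \emph{where} the diagonal enters: it joins $r$ with $a_{23}$ \emph{before} the meet, setting
\[ r\ominus s=\bigl(\bigl[(s_{13}+a_2+a_4)\cap(r+a_{23})\bigr]+a_3+a_4\bigr)\cap(a_1+a_2), \]
and the same coordinate computation gives $(s_{13}+a_2+a_4)\cap(r+a_{23})=\{(v,(g-f)v,-gv,0)\mid v\in W\}$, whence $r\ominus s=\Gamma_{\bar a}(f-g)$; the paddings by $a_4$ (and the padding $+a_3+a_4$ before the final meet) make every join direct and every meet full, so this term is admissible. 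With this single correction --- replacing your $P$-construction by the paper's interleaved term --- your argument for (i)--(iii) matches the paper's proof.
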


\begin{proof}[Proof of Theorem \ref{grass}]
Observe that the class $\mc{C}$ of  $\End(V_1)$ where $V_1^4 \cong V \in \mc{V}$
satisfies   condition (II) for rings while
$\mc{C} \subseteq \Q \mc{R}_f$ follows from Fact \ref{lip}. 
Thus, the  claim follows from the reduction in 
Fact \ref{ad}(iii) and Theorem \ref{ring}
where $\mc{C} =\{\End(V_1)\mid V_1 \in \lt(V), \dim V= 4\dim V_1\}$.
\end{proof}

\begin{proof}[Proof of Fact \ref{ad}]
(i) Consider a $4$-frame of the modular lattice $L$.
Due to the first condition defining a frame,
the assignment $\bar z \mapsto \bar a$ is admissible for 
any term arising from  $\sum_{k\in K} z_k$  by insertion of brackets.
It only remains to deal with the last condition.
Observe that $a_{ij} \cap a_{jk} \leq a_{ij}\cap (a_i + a_j)\cap (a_j +a_k)= a_{ij}\cap a_j=0$, $b + a_{ij} +a_{jk}=1$ where   
$b=\sum_{\ell \neq j}a_\ell$,
 and that the condition is equivalent to
$b\cap (a_{ij}+a_{jk})=a_{ik}$.

(ii)
Recall the approach in the  proof of Lemma \ref{fex}(iii)(a),
in particular the terms $\pi^i_k$. We write $R_{ij}=R_{ij}(L,\bar a)$.
The assignment $x,\bar z \mapsto r,\bar a$ where $r \in R_{ij}$ or
$r \in R_{ji}$ 
is admissible for  $\pi^i_k$ since $r \cap a_{ik} \leq 
(a_i +a_j) \cap a_{ik}=0$ and $\pi^i_k(r)=(r+a_{ik})\cap b$
where $b=\sum_{\ell \neq i} a_{\ell}$ and $b+r+a_{ik}=\One$.
Also, observe that the map $r \mapsto \pi^i_k(r)$ 
restricts to a bijection $\pi^{ij}_{kj}$ of
$R_{ij}$ onto $R_{kj}$ and to a 
bijection $\pi^{ji}_{jk}$ of
$R_{ji}$ onto $R_{jk}$.  
For $r \in R_{12}$ and $s \in R_{23}$ 
one has $r\cap s \leq r \cap _2 =0$  and
$(r+s)\cap b= (r+s) \cap (a_1 +a_3)$,
$r+s +b=1 $ where $b=\sum_{i\neq 1,3} a_i$. 
For $r \in R_{12}$ define  $r_{ij}$ as in the proof of Lemma \ref{fex}.  
Then  the definition  $\otimes(s,r,\bar a)=(r_{12} +s_{23})\cap b$ 
of multiplication 
   is given by a term for which the $x,y,\bar z \mapsto
s,r,\bar a$ with $r,s \in R_{12}$ are admissible.   
To obtain the same kind of term for difference, put
\[
r \ominus s = \bigl( [(s_{13} +a_2+a^+) \cap (r+a_{23})] + a_3+a_4\bigr) \cap
(a_1+a_2) .\]
Thus,
the lattice terms and equations  used in the proof of
Lemma \ref{fex} and Lemma~\ref{5}
can be modified to become admissible.
That $\Gamma_{\bar a}$ is an isomorphism (of rings),
is  shown by easy Linear Algebra calculations (cf. \cite{neu}).
(iii) follows, immediately.
 \end{proof}

\section*{Acknowledgments}
\addcontentsline{toc}{section}{Acknowledgment}
The second author was supported by 
the \emph{German Academic Exchange Service} (DAAD)
under codenumber \texttt{91532398-50015537}.

\end{document}